\newtheorem{thm}{Theorem}[section]
\newtheorem{cor}[thm]{Corollary}
\newtheorem{lem}[thm]{Lemma}
\newtheorem{prop}[thm]{Proposition}
\numberwithin{equation}{subsection}
\theoremstyle{definition}
\newtheorem{defn}[thm]{Definition}
\newtheorem{rem}[thm]{Remark}
\newtheorem{exam}[thm]{Example}
\newcommand{\fsG}{\mathscr{G}}
\newcommand{\US}{\mathfrak{A}}
\newcommand{\fcB}{\mathcal{B}}
\newcommand{\frb}{\mathfrak{b}}
\newcommand{\frh}{\mathfrak{h}}
\newcommand{\frm}{\mathfrak{m}}
\def\f#1{\mathfrak{#1}}
\def\fc#1{\mathcal{#1}}
\newcommand{\fS}{\f{S}}
\def\MN(#1){ M_{#1}(\mathbb{N})}
\def\MNR(#1,#2){ M_{#1}(\mathbb{N})_{#2}}
\def\MNS(#1){ M_{#1}(\mathbb{N})^{\pm}}
\newcommand{\ZZ}{\mathbb{Z}}
\newcommand{\QQ}{\mathbb{Q}}
\newcommand{\ZG}{{{\mathbb{Z}}}_2}
\newcommand{\NN}{\mathbb{N}}
\def\MZ(#1){ M_{#1}(\ZG)}
\def\NZ(#1){ (\NN|\ZG)^{#1}}
\def\NZST(#1,#2,#3){ (\NN|\ZG)^{#1}_{#2|#3}}
\def\NZS(#1,#2){ {\NN}^{#1}_{#2}}
\def\MNZ(#1,#2){ M_{#1}(\NN | \ZG)_{#2}}
\def\MNZN(#1){ M_{#1}(\NN | \ZG)}
\def\CMNZ(#1,#2,#3){\Lambda(#1,#2|#3)}
\def\CMN(#1,#2){\Lambda(#1,#2)}
\def\CMNP(#1,#2){\Lambda_{#1,#2}}
\def\MNZNS(#1){ \MNZN(#1)^{\pm}}
\def\SE#1{{#1}^{\bar{0}}}
\def\SO#1{{#1}^{\bar{1}}}
\def\SEE#1{{#1}^{\bar{0}}}
\def\SOE#1{{#1}^{\bar{1}}}
\def\SS(#1,#2){{#1}^{\ol{#2}}}
\def\SSE(#1,#2){{#1}^{\ol{#2}}}
\def\ESE(#1, #2, #3){ \SEE{#1}_{{#2},{#3}} }
\def\ESO(#1, #2, #3){ \SOE{#1}_{{#2},{#3}} }
\def\bs#1{\boldsymbol{#1}}
\def\Qqs(#1,#2){ \mathcal{Q}(#1,#2) }
\def\lcase#1{\MakeLowercase{#1}}
\def\ol#1{\overline{#1}}
\def\wave#1{\widetilde{#1}}
\newcommand{\ep}{\epsilon}
\newcommand{\Qv}{\mathbb Q({v})}
\def\qn{\mathfrak{\lcase{q}}_n}
\def\qne{\mathfrak{\lcase{q}}_n^{\ol{0}}}
\def\qno{\mathfrak{\lcase{q}}_n^{\ol{1}}}
\def\Uqn{\bsU(\mathfrak{\lcase{q}}_n)}
\def\UqnZ{{\Uqn}_{\ZZ}}
\def\Qvs(#1){\mathcal{Q}_{\lcase{v}}{(\lcase{#1})}}
\def\qn{\mathfrak{\lcase{q}}_{n}}
\def\Uvqn{{\boldsymbol U}_{\!{v}}(\mathfrak{\lcase{q}}_{n})}
\def\Uvsln{{\boldsymbol U}_{\!{v}}(\mathfrak{\lcase{sl}}_{n})}
\def\Uz{\bs{U}_{\ZZ}}
\def\Uv{\bs{U}_{v}}
\def\Uvq(#1){U_{\lcase{v}}(\mathfrak{\lcase{q}}_{\lcase{#1}})}
\def\USN(#1){{\US[#1]}_{v}}
\def\SABJR(#1,#2,#3,#4){({#1}|{#2})[\bs{#3}, #4]}
\def\SABJRS(#1,#2,#3,#4){({#1}|{#2})[#3, #4]}
\def\SABJS(#1,#2,#3){({#1}|{#2})[#3]}
\def\SAJRS(#1,#2,#3){{#1}[#2, #3]}
\def\SAJS(#1,#2){{#1}[#2]}
\def\SABJ(#1,#2,#3){({#1}|{#2})[\bs{#3}]}
\def\SAJR(#1,#2,#3){{#1}[\bs{#2}, #3]}
\def\SAJ(#1,#2){{#1}[\bs{#2}]}
\def\ABJR(#1,#2,#3,#4){({#1}|{#2})(\bs{#3}, #4)}
\def\ABJRS(#1,#2,#3,#4){({#1}|{#2})(#3, #4)}
\def\ABJS(#1,#2,#3){({#1}|{#2})(#3)}
\def\AJRS(#1,#2,#3){{#1}(#2, #3)}
\def\AJS(#1,#2){{#1}(#2)}
\def\ABJ(#1,#2,#3){({#1}|{#2})(\bs{#3})}
\def\AJR(#1,#2,#3){{#1}(\bs{#2}, #3)}
\def\AJ(#1,#2){{#1}(\bs{#2})}
\def\STDUE(#1,#2){({#1}+E_{{#2},{#2}+1}-E_{{#2}+1,{#2}+1}|0)}
\def\STDUO(#1,#2){({#1}-E_{{#2}+1,{#2}+1}|E_{{#2},{#2}+1})}
\def\STDLE(#1,#2){({#1}-E_{{#2},{#2}}+E_{{#2}+1,{#2}}|0)}
\def\STDLO(#1,#2){({#1}-E_{{#2},{#2}}|E_{{#2}+1,{#2}})}
\def\STDDE(#1){(D_{#1}|0)}
\def\STDDO(#1,#2){({#1}-E_{{#2},{#2}}|E_{{#2},{#2}})}
\newcommand{\where}{\ \bs{|} \ }
\def\intd(#1,#2,#3){\left[\begin{matrix}{#1};{#2}\\{#3}\end{matrix}\right]}
\def\intds(#1,#2){\left[\begin{matrix}{#1}\\{#2}\end{matrix}\right]}
\def\intdss(#1,#2){\intd({#1},{0},{#2})}
\def\TAIJ(#1,#2){ T^\lhd_{({#1}, {#2})} }
\def\TDIJ(#1,#2){ T^\rhd_{({#1}, {#2})} }
\def\rmText#1{}
\def\rmForm#1{}
\def\AK(#1,#2){ {\overleftarrow{\bf r}}^{#2}_{#1} }
\def\BK(#1,#2){ {\overrightarrow{\bf r}}^{#2}_{#1} }
\newcommand{\refdot}{\bs{\cdot}}
\def\bsU{\boldsymbol U}
\def\sfE{\mathsf E}
\def\sfF{\mathsf F}
\def\genE{\hlt{\sfE}}
\def\genF{\hlt{\sfF}}
\def\genK{\hlt{\mathsf{K}}}
\def\genE{{\sfE}}
\def\genF{{\sfF}}
\def\genXE{{\sfE}}
\def\genXF{{\sfE}}
\def\genK{{\mathsf{K}}}
\def\gene{{\sf{e}}}
\def\genf{{\sf{f}}}
\def\genxe{{\sf{e}}}
\def\genxf{{\sf{e}}}
\def\genxh{{\sf{h}}}
\newcommand{\vx}{\bs{\mathrm{x}}}
\newif\ifdetail
\title[Braid Group Action and Quantum Queer Superalgebra ]
{Braid Group Action and Quantum Queer Superalgebra }
\address{Jianmin Chen, School of Mathematical Sciences, Xiamen University, Xiamen 361005, China}
\email{chenjianmin@xmu.edu.cn}
\address{Zhenhua Li,  Jiyang,  Jinan 251400, China}
\email{zhen-hua.li@qq.com}
\address{Hongying Zhu, School of Mathematical Sciences, Xiamen University, Xiamen 361005, China}
\email{hongyingz@stu.xmu.edu.cn}
\keywords{quantum  queer  superalgebra, braid group action, root vector, PBW   basis}
\subjclass[2020]{17B37, 17A70, 20G42, 20C08}
\begin{document}

\maketitle

\begin{center}
  \normalsize
\renewcommand{\thefootnote}{\fnsymbol{footnote}}
 Jianmin Chen, 
 Zhenhua Li and 
Hongying Zhu\footnotemark[1]
\footnotetext[1]{Corresponding author} 
\renewcommand{\thefootnote}{\arabic{footnote}}
    
\end{center}

\begin{abstract}
In this paper, we present explicit actions of braid group on the universal enveloping superalgebra $\Uqn$ and the quantum queer superalgebra $\Uvqn$. 
Then we provide a new  definition of root vectors 
and some explicit expression for them.
With these procedures, we obtain the PBW-type basis containing the product of root vectors.

\end{abstract}

\maketitle

\tableofcontents

%\large
%\spaceintv

\section{Introduction}\label{sec_introduction}
  
  The queer Lie superalgebra $\qn$ is a super analogue of the general linear Lie algebra $\mathfrak{gl}_n$ and it differs drastically from the basic classical Lie superalgebras. 
  One of the major difference lies in that  the Cartan subalgebra of $\qn$ is not purely even and there is no invariant bilinear form on $\qn$, which results in very interesting phenomenon on the highest weight space of a finite dimensional irreducible supermodule (see \cite[\S 1.5.4]{CW}). 
  For this reason, it is very complicated and challenging task to investigate the structure and representation theory of queer Lie superalgebra $\qn$(see \cite{Sergeev1985THETA}), which has received extensive attention and research.

  Quantum groups first arose in the physics literature. In the work of L.D.Faddeev (see \cite{Faddeev}), quantum groups had been developed to construct and solve ` integrable ' quantum systems. 
  In the original form, quantum groups are associative algebras whose defining relations are quantum R-matrix. It was realized independently by V. G. Drinfel'd (see \cite{Uvsln}, \cite{DV}) and M. Jimbo (see \cite{Jimbo}) in the mid-1980s that these algebras are Hopf algebras. In many cases, these algebras are deformations of `universal enveloping algebras' of Lie algebras. 

  Braid groups first appeared, albeit in a disguised form, in an article by Hurwitz published in 1891 and devoted to ramified coverings of surfaces.
  The notion of a braid was explicitly introduced by Artin \cite{artin1925theory} in the 1920s to formalize topological objects that model the intertwining of several strings in Euclidean 3-space. Artin pointed out that braids with a fixed number $n$ of strings form a group, called the $n$-th braid group and denoted by $\fc{B}$. And it had proved by F. Bohnenblust in \cite{Bohnenblust}.  
  Since then, the braids and the braid groups have been extensively studied by topologists and algebraists, which led to a rich theory. Numerous ramifications 
  now play a role in various parts of mathematics, including knot theory and low dimensional topology. And then the braid group action on quantum group was first formulated as a tool by Lusztig in \cite{Lus}. 

  The root vectors of an algebra are typically associated with the root system of a Lie algebra. In the theory  of root system of a Lie algebra, root vectors are a set of specific vectors labeled by roots, which are elements of a Cartan subalgebra.
  Specifically, given a semisimple Lie algebra, which includes a Cartan subalgebra, the root system consists of a set of linearly independent elements taking values in this Cartan subalgebra. Each root corresponds to a root vector, and these root vectors constitute the fundamental representation of the Lie algebra.

  The Poincar\'{e}-Birkhoff-Witt (PBW) theorem was initially formulated and independently proven by Birkhoff \cite{Birkhoff} and Witt \cite{Witt} in 1937, which is often used as a springboard for intevestigating the representation theory of algebras. 
  The classical PBW Theorem associates to any order basis of a Lie algebra is a basis of its universal enveloping algebra. In the case of a quantized universal enveloping algebra, one has no underlying Lie algebra, much less a basis of it, so it is not immediately clear how to obtain an analogue of the PBW theorem for quantum algebra. 
  
  The quantized universal enveloping algebra $\Uvsln$ was introduced by Drinfel'd \cite{Uvsln}. And the braid group action on its algebraic structures, particularly its introduction in the context of quantum groups, was first formulated by Lusztig in 1988 (see \cite{Lus}). 
  And then Lusztig \cite{Lus2} gave analogues of `root vectors' associated with the non-simple roots to construct a basis of $\Uvsln$ of PBW-type.
  In this work, Lusztig developed the concept of the braid group action and root vectors on quantum algebras, which plays a significant role in the study of quantum groups and their representation theory. 

   A quantum deformation $\Uvqn$ of the universal enveloping algebra $\Uqn$ was constructed by Olshanski \cite{Ol} using a modification of the Reshetikhin-Takhtajan-Faddeev method. 
   In this paper, we use the definition of {$\Uvqn$} given in \cite{DW1}, which is equivalent to the one given by Olshanski.
   And then we will present a construction of the braid group action on the quantum queer superalgebra $\Uvqn$, which will enable the definition of root vectors associated to arbitrary root of $\qn$. And the idea for the definition is already described in \cite{CP}: the non-simple roots vectors of $\qn$ can be defined by using the action of a finite covering of the Weyl group on $\qn$; then in the quantum case, the action of the Weyl group is replaced by that of the braid group. 
   Moreover, $\Uqn$ is the classical limit of $\Uvqn$ (see \cite{DJ}), then we have some relevant results for the classical algebraic structures. 
   
   This paper is organized as follows.  
   We  recall the definition of the quantized universal enveloping algebra $\Uvsln$ and the braid group action on it in section \ref{braid}. 
   In section \ref{braid_1},  with the basics of the queer Lie superalgebra $\qn$ and its universal enveloping superalgebra $\Uqn$, we provide an action of braid group in $\Uqn$.  After that we introduce the root vector for $\Uqn$, compute all commutation formulas for these vectors and discuss all the expression of root vectors for $\Uqn$ in section \ref{root_2}.
In section \ref{PBW_0}, we give the PBW-type basis of the Kostant $\ZZ$-form.
   From section \ref{braid_uvqn} onwards, we investigate the quantum case. 
   In section \ref{braid_uvqn}, we recall the equivalent definition of quantum queer superalgebra and present a construction of the braid group action on it.
   In section \ref{root_1}, we introduce quantum root vectors for $\Uvqn$.
   In the last section, we construct a PBW-type basis for the quantum queer superalgebra.

  Throughout this paper, let {$\QQ$} be the field of rational numbers, and  $\Qv$ be the field of fractions in one variable $v$ over  {$\QQ$}.
 
%%%%%%%%ZZZZZZZZZZZZZ
\section{The braid group}\label{braid}
  
We recall the definition of the  quantized universal enveloping algebra of type A and the braid group associated to it.

Let {$\mathrm{A}_{n-1} = (a_{i,j}) $} be the Cartan matrix of type A with order ${n-1}$.
More precisely,
\begin{equation*}
a_{i, j} =
\left\{
\begin{aligned}
&2  \qquad &\mbox{if}\enspace  i=j,   \\
& -1  \qquad &\mbox{if}\enspace  |i-j| = 1,    \\
&0  \qquad & \mbox{if}\enspace  |i-j| > 1. 
\end{aligned}
\right.
\end{equation*}
It is known that {$\f{sl}_{n}$} is the  simple Lie algebra  associated to {$\mathrm{A}_{n-1}$}.
% and its the Drinfeld-Jimbo quantum group {$U_v(\f{g}_{n})$} be its quantum eveloping algebra.
%\begin{defn} \label{gn}
 %\cite[Section 0.4]{Lus3}
\begin{defn}\label{def_uvsln}
%\cite[Section 9.1]{Uvsln, CP},
(See \cite{Uvsln} and \cite[Section 9.1]{CP})
The quantum algebra  {$\Uvsln$} is the associative algebra over  {$\Qv$} 
 generated by  $E_{i}$,  $F_{i}$ and {$K_{i}$} ($1 \le i \le n-1$) 
with  relations 
\begin{align*}
&K_iK_j=K_jK_i,\qquad
K_iK_i^{-1}=K_i^{-1}K_i=1;\\
&K_iE_j=v^{a_{i,j}}E_jK_i, \qquad K_iF_j=v^{-a_{i,j}}F_jK_i;\\
&E_iF_i-F_iE_i=\delta_{i,j}\frac{K_i-K_i^{-1}}{v-v^{-1}};\\
&E_iE_j=E_jE_i,\qquad  \qquad F_iF_j=F_jF_i \quad \mbox{ for}\enspace |i-j|> 1;\\
&E_i^{2}E_j-(v+v^{-1})E_iE_jE_i+E_jE_i^{2}=0,\\
&F_i^{2}F_j-(v+v^{-1})F_iF_jF_i+F_jF_i^{2}=0 \quad  \mbox{ for}\enspace |i-j| = 1.
\end{align*}
\end{defn}

The braid group was first introduced by Artin (see \cite{artin1925theory}). In this paper, we use the definition of braid group given in \cite{Lus}.

\begin{defn}
  The braid group {$\fc{B}=\fc{B}(\f{sl}_n)$} associated to  {$\f{sl}_n$} is generated by {$T_1$}, {$\cdots$}, {$T_{n-1}$}  	
with  relations 
\begin{equation}\label{T_relation}
\begin{aligned}
  &T_iT_{j}T_i=T_{j}T_iT_{j}\quad \mbox{if}\enspace  |i-j|= 1,\\
  &T_iT_j=T_jT_i \quad \mbox{if} \enspace|i-j|> 1.
\end{aligned}
\end{equation}  
\end{defn}

Notice that the Weyl group of {$\f{sl}_n$} is identified with the symmetric group {$\fS_{n}$},
and the defining relations of  {$\fc{B}$} are the defining relations of  {$\fS_{n}$} by replacing {$s_i$} with {$T_i$},
while the relation {$s_i^2 = 1$} being removed. 

The following proposition
describes the action of {$\fc{B}$} on {$\Uvsln$} in \cite{Lus}.

\begin{prop}\label{braid_on_gn}
\cite[Proposition 5.1]{Lus}
For  $1 \le i,j \le n-1$, 
the braid group {$\fc{B}$}  acts by {$\Qv$}-algebra automorphisms on {$\Uvsln$} as
\begin{align*}
&
T_{i} (K_{i}) =  K_{i}^{-1},\quad
T_{i} (E_{i}) = -F_{i} K_{i}, \quad
T_{i} (F_{i}) = - K_{i}^{-1} F_{i};\\
&
T_{i} (K_{j}) =  K_{j}  K_{i}  \quad { for } |i - j| = 1 ;\\
& T_{i} (E_{j}) 
=   -  E_{i} E_{j}  +   {v}^{-1} E_{j} E_{i}, \quad
T_{i} (F_{j}) 
= -   F_{j} F_{i} +   {v} F_{i}  F_{j}  \quad \mbox{ for}\enspace  |i - j| = 1; \\
&
T_{i} (K_{j}) =  K_{j}  , \quad
T_{i} (E_{j}) = E_{j}, \quad
T_{i} (F_{j}) = F_{j} 
 \quad \mbox{ for}\enspace |i - j| > 1.
\end{align*}
\end{prop}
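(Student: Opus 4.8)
The plan is to verify three facts, from which the statement follows: (i) each $T_i$ extends to a $\Qv$-algebra endomorphism of $\Uvsln$; (ii) each such endomorphism is invertible, hence an automorphism; and (iii) the operators $T_1,\dots,T_{n-1}$ satisfy the braid relations \eqref{T_relation}. Given (i)--(iii), the assignment $T_i\mapsto T_i$ extends to a homomorphism from $\fc{B}$ into the automorphism group of $\Uvsln$, which is exactly the asserted action.

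For (i) I would use the presentation of $\Uvsln$ in Definition~\ref{def_uvsln}: it suffices to substitute the proposed images of $E_j$, $F_j$, $K_j^{\pm1}$ into each defining relation and check that the resulting identity holds in $\Uvsln$. The relations involving only the $K$'s are immediate, since every $T_i(K_j^{\pm1})$ is a Laurent monomial in the $K$'s. The cross relations $K_kE_l=v^{a_{k,l}}E_lK_k$ and their $F$-analogues reduce to a weight computation: $T_i$ respects the root-lattice grading, negating the weight of $E_i$ and, for $|i-j|=1$, sending the weight of $E_j$ to that of $E_iE_j$. The remaining relations --- the rank-one commutator $E_iF_i-F_iE_i=(K_i-K_i^{-1})/(v-v^{-1})$ and the quantum Serre relations --- are the substantive cases; after moving the $K$-factors to one side by means of the $K$--$E$ and $K$--$F$ commutations, each reduces to an identity among monomials in the $E$'s (respectively the $F$'s) supported on at most three consecutive indices, which is confirmed by a finite manipulation using the quantum Serre relations themselves.

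For (ii) I would exhibit an explicit two-sided inverse $T_i^{-1}$, given by formulas of the same shape as those for $T_i$ but with the roles of $E$ and $F$ interchanged and $v$ replaced by $v^{-1}$ (equivalently, $T_i^{-1}$ arises from $T_i$ by conjugation with a standard anti-automorphism of $\Uvsln$). By the argument of the previous paragraph this $T_i^{-1}$ is again a well-defined endomorphism, and a direct check shows that $T_i\circ T_i^{-1}$ and $T_i^{-1}\circ T_i$ fix all the generators; being algebra endomorphisms, they are therefore the identity.

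For (iii), the relation $T_iT_j=T_jT_i$ for $|i-j|>1$ is essentially immediate: on any generator at most one of $T_i$, $T_j$ acts nontrivially, and the two involve disjoint sets of indices. The braid relation $T_iT_jT_i=T_jT_iT_j$ for $|i-j|=1$ is the main obstacle. Both sides are algebra automorphisms, so it is enough to check they agree on each generator $E_k$, $F_k$, $K_k$; and since $T_\ell$ fixes every generator whose index is at distance $>1$ from $\ell$, only the finitely many $k$ lying within distance $1$ of $\{i,j\}$ need be treated, and for each such $k$ the whole computation stays inside the subalgebra generated by the $E_m$, $F_m$, $K_m^{\pm1}$ with $m$ ranging over a short interval of indices. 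Within that subalgebra one expands $T_iT_jT_i$ and $T_jT_iT_j$ on the generator in question and reduces both outputs to a common normal form by repeated use of the quantum Serre relations; this is precisely the computation carried out in \cite[Proposition~5.1]{Lus}, to which we refer for the (elementary but lengthy) details. It is these iterated normal-form reductions, rather than any conceptual difficulty, where the real work of the proof is concentrated.
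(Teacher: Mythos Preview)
Your proposal is correct and follows the standard route (check that each $T_i$ preserves the defining relations, exhibit an inverse, then verify the braid relations on generators), which is precisely the argument in \cite[Proposition~5.1]{Lus}. Note that the paper itself does not prove this proposition at all: it is stated with a direct citation to Lusztig and no proof is given, so there is nothing further to compare.
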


%Recall the Weyl group of $\qn$ is identified as  $\fS_n$. 
Assume  $w \in \fS_n$
is decomposed into a product of simple reflections {${w} = s_{i_1}s_{i_2}\dots s_{i_t} $}, 
it is called  a reduced expression (or a reduced decomposition) if $t$ is the minimal number of simple reflections which appear in any such expression of $w$, 
and $t$ is denoted by $l(w)$, called the length of $w$.

For the reduced expression {${w} = s_{i_1}s_{i_2}\dots s_{i_t} $},
we denote {$T_{w} := T_{i_1}T_{i_2}\dots T_{i_t}$}.
Assume  that $w=s_{i_1}s_{i_2}\dots s_{i_t}$ and $w=s_{j_1}s_{j_2}\dots s_{j_t}$ are two different reduced expressions,
by \cite[Theorem 6.45]{DDPW}, 
we have
\begin{equation}\label{br_relation1}
\begin{aligned}
 T_{i_1}T_{i_2}\dots T_{i_t}=T_{j_1}T_{j_2}\dots T_{j_t}.
\end{aligned}
\end{equation}
 So   {$T_{w}$} is well defined. 
 Referring to \cite[Proposition 8.1.3]{CP}, for any $w_1, w_2 \in \fS_{n}$, 
 we have 
 \begin{equation}\label{br_relation2}
\begin{aligned}
T_{w_1}T_{w_2}=T_{w_1 w_2}, \qquad \mbox{ if }l(w_1)+l(w_2)=l(w_1w_2).
\end{aligned}
\end{equation}
%\hlt{[TODO: is this line used in the following parts?]
%And it is easy to verify that
 %$\Omega T_w =T_w \Omega $ for any $w \in \fS_{n}$.
%}

\section{The queer superalgebra $\Uqn$ and the action of $\fc{B}$ on  it}\label{braid_1}

In this section, we will study  the braid group action on queer superalgebra $\Uqn$. 

 Following \cite{DW}, let {$I(n|n) = \{1,  2,  \cdots,  n, -1, -2, \cdots,  -n  \}$}, and the elementary matrices are accordingly denoted by $E_{i,j}$, with $i,j\in I(n|n)$.

The queer  Lie superalgebra  {$\qn$} of order $n$ is the Lie superalgebra 
consists of the matrices of the form 
\begin{equation}\label{qndef}
\begin{aligned}
\left(
\begin{matrix}
A & B\\
B & A 
\end{matrix}
\right),
\end{aligned}
\end{equation}
where {$A, B$} are {$ n \times n$} matrices, 
with  rows and columns  labeled by {$I(n|n)$}.
 The even  part {$\qne$} of {$\qn$} consists of the matrices of the form \eqref{qndef} with {$B=0$},
while the odd part {$\qno$} of {$\qn$} consists of the matrices  with {$A=0$}.

For any {$i, j \in I(n| n)$},  let {$E_{i, j} \in \qn$} be  the matrix  with $1$ at the {$(i, j)$} position  and 
$0$ elsewhere. 
 It is clear that the set {$\{ E_{i, j} \where  i, j   \in I(n| n)\} $} forms a basis of {$\qn$}.
The parity of {$E_{i, j}$} is defined to be 
\begin{align*}
p(E_{ i, j} )=  
\left\{
\begin{matrix}
0 \quad \mbox{ if}\enspace  i j > 0,  \\
1 \quad \mbox{ if}\enspace i j < 0.
\end{matrix}
\right.
\end{align*}

 Let {$\frh = \SE{\frh} \oplus \SO{\frh}$} be the standard Cartan subalgebra of {$\qn$}.
Denote
\begin{align*}
h_{i} = E_{i,i} + E_{-i, -i}, \quad
h_{\ol{i}} = E_{-i,i} + E_{i, -i},  \quad
 i \in \{1,2, \cdots, n\}.
\end{align*}
It is known that {$ \SE{\frh} $} has basis {$\{ h_{1}, \cdots, h_{n}\}$},
while  {$ \SO{\frh} $} has basis {$\{ h_{\ol{1}}, \cdots, h_{\ol{n}}\}$}.

Let {$\{ \bs{\ep}_1, \cdots,  \bs{\ep}_n \} $} be the basis of  {$ {(\SE{\frh})}^* $}   
dual to {$\{ h_{1}, \cdots, h_{n}\}$}.
The dot product  on {$\{ \bs{\ep}_1, \cdots, \bs{\ep}_n \} $} 
 is defined as $\bs{\bs{\ep}}_i  \refdot \bs{\bs{\ep}}_j=\delta_{i,j}$.

For any {$1 \le i \ne j \le n$}, denote 
\begin{equation}
 \alpha_{i,j} :=  \bs{\ep}_i - \bs{\ep}_j .
\end{equation}
According to \cite[Section 1.2.6]{CW},
the standard Borel subalgebra {$\frb$} of {$\qn$} consists of the matrices of the form \eqref{qndef} 
with $A$, $B$ being upper triangular,
and the set of all positive roots corresponding to
 {$\frb$} is
\begin{align*}
\Phi^+ = \{ \alpha_{i,j}  =  \bs{\ep}_i - \bs{\ep}_j  \where 1 \le i < j \le n\} ,
\end{align*}
and the set of all negative roots  is   
\begin{align*}
\Phi^- = \{ \alpha_{i,j}  = \bs{\ep}_i - \bs{\ep}_j  \where 1 \le i < j \le n \} .
\end{align*}
Denote  {$\alpha_j =\alpha_{j, j+1}  =  \bs{\ep}_j -  \bs{\ep}_{j+1}$} for all {$j \in [1, n-1]$},
then the set of all simple positive roots could be denoted as  
\begin{align*}
\Pi^+ = \{ \alpha_{j}    \where 1 \le  j \le n-1\} .
\end{align*}
Denote {$\mathrm{Q} := \bigoplus_{j=1}^{n-1} \ZZ \alpha_{j} $} to be  the set of root lattice.

%Let  {$\f{g}_{n-1}$} be the  semisimple Lie algebra  of type {${A}$}  with order {$n-1$}.
According to \cite[Section 1.2.6]{CW}, 
the Weyl group of {$\qn$} is identified with the symmetric group {$\fS_n$},
which is also the Weyl group of  {$\f{sl}_{n}$}.
As there is an isomorphism 
\begin{align*}
   \SE{\frh}& \longrightarrow {(\SE{\frh})}^*, \\
   h_i & \mapsto \bs{\ep}_i, \qquad
   h_i - h_{i+1}   \mapsto \alpha_i,  
\end{align*}
we get an action of {$\fS_{n}$} on {$ {(\SE{\frh})}^*$} as
\begin{equation}\label{actionsn} 
\begin{aligned}
s_{i} (\xi) = \xi - \xi(h_i - h_{i+1} ) \alpha_{i} \qquad
( \xi \in {(\SE{\frh})}^*).
\end{aligned}
\end{equation}
It is verified that
\begin{equation}\label{action_dual}
s_{i} (\alpha_j) =
\left\{
\begin{aligned}
& -\alpha_{j} \qquad &\mbox{ for}\enspace i = j,  \\
& \alpha_{i} + \alpha_{j} \qquad &\mbox{ for}\enspace |i-j| = 1,   \\
& \alpha_{j} \qquad & \mbox{ otherwise} ,
\end{aligned}
\right.
\end{equation}
and 
\begin{equation}\label{action_dual_ep}
s_{i} (\bs{\ep}_j) = \bs{\ep}_{s_i(j)}.
\end{equation}

The queer superalgebra {$\Uqn$} is the universal enveloping superalgebra of  $\qn$.
For any homogeneous $x,y \in \Uqn$,  the Lie superbracket is defined as 
\begin{align*}
[x,y] = xy - {(-1)}^{ p(x) p(y) }yx,
\end{align*}
where for homogeneous $z$, $p(z)$ is the parity of $z$.

Following \cite[Proposition 1.1]{DJ},  $\Uqn$   could also be presented as an associative algebra  as follows.
\begin{prop}  \label{classdefine} 	
 The universal enveloping superalgebra $\Uqn$ is the associative superalgebra over $\QQ$ 
generated by even generators $h_i, e_j, f_j$, and odd generators $h_{\ol{i}},e_{\ol{j}},f_{\ol{j}}$, 
with $1\le i \le n$ and $1\le j \le n-1$,
 subjecting to the following relations:
\begin{align*}
   	({\rm QS1})\quad
   	&[h_i,h_j]=0,\qquad [h_i,h_{\ol{j}}]=0,\qquad [h_{\ol{i}},h_{\ol{j}}]=\delta_{i,j}2h_i;\\
   	({\rm QS2})\quad
   	&[h_i,e_j]=(\ep_i,\alpha_{j})e_j, \qquad [h_i,e_{\ol{j}}]=(\ep_i,\alpha_{j})e_{\ol{j}}; \\ 
   	&[h_i,f_j]=-(\ep_i,\alpha_{j})f_j, \qquad [h_i,f_{\ol{j}}]=-(\ep_i,\alpha_{j})f_{\ol{j}}, \\
   	({\rm QS3})\quad
   	&[h_{\ol{i}},e_j]=(\ep_i,\alpha_{j})e_{\ol{j}},\qquad 
   	[h_{\ol{i}},f_j]=-(\ep_i,\alpha_{j})f_{\ol{j}},\\
   	&[h_{\ol{i}},e_{\ol{j}}]=
\begin{cases}
   	e_j \quad &\mbox{if}\enspace i=j \enspace \mbox{or}\enspace j+1, \\
   	0 \quad & otherwise,
\end{cases}
   	\qquad 
   	[h_{\ol{i}},f_{\ol{j}}]=
\begin{cases}
   	f_j \quad &\mbox{if}\enspace i=j \enspace \mbox{or}\enspace j+1, \\
   	0 \quad & otherwise;
\end{cases}\\
   	({\rm QS4})\quad
   	&[e_i,f_j]=\delta_{i,j}(h_{i}- h_{i+1}),\qquad 
   	[e_{\ol{i}},f_{\ol{j}}]=\delta_{i,j}(h_{i}+ h_{i+1}),\\
   	&[e_{\ol{i}},f_j]=\delta_{i,j}(h_{\ol{i}}- h_{\ol{i+1}}),\qquad 
   	[e_i,f_{\ol{j}}]=\delta_{i,j}(h_{\ol{i}}- h_{\ol{i+1}});\\  	
   	({\rm QS5})\quad
   	&[e_i,e_{\ol{j}}]=[e_{\ol{i}},e_{\ol{j}}]=[f_i,f_{\ol{j}}]=[f_{\ol{i}},f_{\ol{j}}]=0 \quad \mbox{for}\enspace |i-j|\ne 1,\\
   	&[e_i,e_j]=[f_i,f_j]=0 \quad \mbox{for} \enspace|i-j|> 1,\\
    &[e_i,e_{i+1}]=[e_{\ol{i}},e_{\ol{i+1}}],\qquad 
   	[e_i,e_{\ol{i+1}}]=[e_{\ol{i}},e_{i+1}],\\
   	&[f_{i+1},f_i]=[f_{\ol{i+1}},f_{\ol{i }}],\qquad 
   	[f_{i+1},f_{\ol{i}}]=[f_{\ol{i+1}},f_{i}];\\
   	({\rm QS6})\quad
   	&[e_i,[e_i,e_j]]=[e_{\ol{i}},[e_i,e_j]]=0,\qquad
   	[f_i,[f_i,f_j]]=[f_{\ol{i}},[f_i,f_j]]=0 \quad \mbox{for}\enspace |i-j|= 1.
\end{align*}
\end{prop}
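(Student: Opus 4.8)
The plan is to realise $\Uqn$ as the quotient of the free associative $\QQ$-superalgebra on the listed generators by the two-sided ideal generated by the relations (QS1)--(QS6), and to prove this in two stages: first a surjection onto $\Uqn$, then an upgrade to an isomorphism by a PBW-basis count. Write $\widetilde{U}$ for the abstract superalgebra presented by (QS1)--(QS6) (with the indicated parities). Inside $\Uqn$ put
\begin{align*}
&h_i=E_{i,i}+E_{-i,-i},\quad h_{\ol i}=E_{-i,i}+E_{i,-i},\\
&e_j=E_{j,j+1}+E_{-j,-j-1},\quad f_j=E_{j+1,j}+E_{-j-1,-j},\\
&e_{\ol j}=E_{j,-j-1}+E_{-j,j+1},\quad f_{\ol j}=E_{j+1,-j}+E_{-j-1,j}.
\end{align*}
Using the supercommutator identity $[E_{a,b},E_{c,d}]=\delta_{b,c}E_{a,d}-(-1)^{p(E_{a,b})p(E_{c,d})}\delta_{d,a}E_{c,b}$ in $\qn$, one checks by a direct (if lengthy) matrix computation that these elements satisfy every relation in (QS1)--(QS6); hence the assignment on generators extends to a homomorphism $\phi\colon\widetilde{U}\to\Uqn$ of superalgebras. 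The same computations show that the six displayed families generate $\qn$: the even ones generate $\qne\cong\mathfrak{gl}_n$ in the usual Chevalley fashion, and iterated brackets of these with $h_{\ol i},e_{\ol j},f_{\ol j}$ sweep out all of $\qno$ (one checks each $E_{a,-b}+E_{-a,b}$ arises this way, the one missing ``trace'' direction already being spanned by the generators $h_{\ol i}$). Therefore $\phi$ is surjective.

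For injectivity I would first establish a triangular decomposition of $\widetilde{U}$. Let $\widetilde{U}^+$, $\widetilde{U}^0$, $\widetilde{U}^-$ be the subalgebras of $\widetilde{U}$ generated respectively by $\{e_j,e_{\ol j}\}$, $\{h_i,h_{\ol i}\}$ and $\{f_j,f_{\ol j}\}$. Relations (QS2)--(QS3) allow one to move every Cartan generator to the left of the $e$'s and to the right of the $f$'s; (QS4) rewrites any product of an $e$ by an $f$ as an $f$ by an $e$ plus a Cartan term; and (QS1), together with $h_{\ol i}^2=h_i$ (which follows from $[h_{\ol i},h_{\ol i}]=2h_i$), puts any product of Cartan generators into normal form. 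Consequently the multiplication map $\widetilde{U}^-\otimes\widetilde{U}^0\otimes\widetilde{U}^+\to\widetilde{U}$ is surjective, and $\phi$ carries $\widetilde{U}^+$, $\widetilde{U}^0$, $\widetilde{U}^-$ onto the subalgebras of $\Uqn$ generated by the corresponding matrices.

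Next I would bound the sizes of the three pieces. In $\widetilde{U}^+$ introduce, for $1\le i<j\le n$, root vectors $e_{i,j}$ (even) and $e_{\overline{i,j}}$ (odd, carrying exactly one barred factor) as iterated brackets of the simple $e_k$ and $e_{\ol k}$, fix a total order on all of them, and use (QS5)--(QS6) to derive straightening identities expressing any out-of-order product of two root vectors as a linear combination of ordered monomials of the same or lower degree. This shows $\widetilde{U}^+$ is spanned by ordered monomials in the root vectors in which the odd ones occur with exponent at most one, indexed exactly by the PBW data of the nilradical of the Borel; similarly for $\widetilde{U}^-$, while $\widetilde{U}^0$ is spanned by ordered monomials in the $h_i$ (exponents in $\NN$) and the $h_{\ol i}$ (exponents $0$ or $1$). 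Combining with the triangular decomposition, $\widetilde{U}$ is spanned by a set $\mathcal{M}$ of monomials; by construction $\phi$ sends $\mathcal{M}$ bijectively onto the PBW basis of $\Uqn$ furnished by the super PBW theorem. Since that image is linearly independent, $\mathcal{M}$ must already be linearly independent in $\widetilde{U}$, so $\mathcal{M}$ is a basis and $\phi$ is an isomorphism.

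The step I expect to be the main obstacle is the last one: showing that (QS5)--(QS6) really are \emph{enough} to straighten products of root vectors, equivalently that $\widetilde{U}^{\pm}$ is no larger than the enveloping algebra of the corresponding nilpotent subalgebra of $\qn$. This can be handled either combinatorially, via Bergman's diamond lemma --- one must check that all overlap ambiguities among the chosen straightening rules, including those created by the quartic Serre-type relations in (QS6), are resolvable --- or representation-theoretically, by constructing Verma-type $\widetilde{U}$-modules and showing $\widetilde{U}^-$ acts faithfully on a suitable one, which imports the known highest-weight theory of $\qn$ from \cite{CW}. A persistent secondary nuisance throughout is the sign bookkeeping in the super-brackets, most delicately in those involving the odd Cartan elements $h_{\ol i}$.
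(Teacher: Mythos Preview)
The paper does not prove this proposition at all: it is imported verbatim as \cite[Proposition 1.1]{DJ}, with no argument supplied. Your proposal therefore goes well beyond what the paper does, and the strategy you outline --- build a surjection $\widetilde U\to\Uqn$ by checking the relations on the matrix realisations, then prove injectivity by showing that a spanning set of ordered monomials in $\widetilde U$ maps to the PBW basis of $\Uqn$ --- is the standard and correct one for results of this type.

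Your identification of the genuine difficulty is accurate: the nontrivial content is that the Serre-type relations in (QS5)--(QS6) suffice to straighten $\widetilde U^{\pm}$, i.e.\ that no extra relations among the positive (or negative) root vectors are needed. Both of the methods you mention (diamond lemma or faithful action on Verma modules) would work; the original reference \cite{DJ} handles the analogous step in the quantum case, and the classical version is strictly easier. One small point: be careful with the claim that $\phi$ sends your spanning set $\mathcal M$ \emph{bijectively} onto the PBW basis of $\Uqn$ --- what you actually need (and what you have) is only that the image of $\mathcal M$ is contained in a basis of $\Uqn$ up to nonzero scalars, which already forces linear independence of $\mathcal M$ in $\widetilde U$.
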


\begin{rem}
There is an anti-involution $\omega $  of $\Uqn$ defined on the generators as
\begin{align*}
 	&\omega (h_i) = h_i,\quad 
 	\omega (e_j) = f_j,\quad 
 	\omega (f_j) = e_j,\quad \\
 	&\omega (h_{\ol{i}}) = h_{\ol{i}} ,\quad 
 	\omega (e_{\ol{j}} ) = f_{\ol{j}},\quad 
 	\omega (f_{\ol{j}} )= e_{\ol{j}},
\end{align*}
 	with $1\le i \le n$ and $1\le j \le n-1 $.
\end{rem}

%Hence, together with Proposition \ref{action_even_2} and \ref{prop2}, we deduce the action of braid group {$\fc{B}(\f{sl}_{n}) $} on $\Uqn$ as follows.
Considering Proposition \ref{braid_on_gn},
it is natural to think about the action of $\fcB$ on  $\Uqn$.
\begin{prop}\label{action_2}
There is an action of the braid group $\fcB$ by  superalgebra automorphism of $\Uqn$ 
on the generators defined as 
\begin{align*}
    & T_{i}(h_{j}) = h_{s_{i}(j)}, \qquad
    T_{i}(h_{\ol{j}}) = h_{\ol{s_{i}(j)}} 
    ; \\
    	&
	T_{i} ({e}_{i}) = - {f}_{i} , \quad
	T_{i} ({f}_{i}) = -   {e}_{i}  ,\quad T_{i} ({e}_{\ol{i}}) =- {f}_{\ol{i}}, \quad
	T_{i} ({f}_{\ol{i}}) = -{e}_{\ol{i}};\\
	& 
	T_{i} ({e}_{j}) 
	=   -  {e}_{i} {e}_{j}  +    {e}_{j} {e}_{i}, \quad
	T_{i} ({f}_{j}) 
	=  -   {f}_{j} {f}_{i} +   {f}_{i}  {f}_{j} ,  \\
 & 
	T_{i} ({e}_{\ol{j}}) 
	=   -  {e}_{i} {e}_{\ol{j}}  +    {e}_{\ol{j}} {e}_{i}, \quad
	T_{i} ({f}_{\ol{j}}) 
	= -   {f}_{\ol{j}} {f}_{i} +    {f}_{i}  {f}_{\ol{j}} 
	\quad \mbox{ for}\enspace |i - j| = 1; \\
	&
	T_{i} ({e}_{j}) = {e}_{j} , \quad
	T_{i} ({f}_{j}) = {f}_{j} ,\quad
	T_{i} ({e}_{\ol{j}}) = {e}_{\ol{j}}, \quad
	T_{i} ({f}_{\ol{j}}) = {f}_{\ol{j}} 
	\quad \mbox{ for}\enspace |i - j| > 1.
\end{align*}
  And the action of {$T_{i} ^{-1}$} is given by 
\begin{align*}
&   
       T_{i}^{-1} (h_{j}) = h_{s_{i}(j)}, \qquad
       T_{i}^{-1} (h_{\ol{j}}) = h_{\ol{s_{i}(j)}} ; \\
&
	T_{i} ^{-1}(e_{i}) =  - f_{i}  , \quad
	T_{i} ^{-1}(f_{i}) = -  e_{i} ,\quad 
        T_{i}^{-1}({e}_{\ol{i}}) =- {f}_{\ol{i}}, \quad
	T_{i}^{-1}({f}_{\ol{i}}) = -{e}_{\ol{i}};\\
 & 
	T_{i} ^{-1}(e_{j}) 
	=   -  e_{j} e_{i}   +   e_{i} e_{j}, \quad
	T_{i}^{-1} (f_{j}) 
	= -    f_{i} f_{j}+   f_{j} f_{i} , \\
 & 
	T_{i}^{-1}({e}_{\ol{j}}) 
	=   -  {e}_{j} {e}_{\ol{i}}  +    {e}_{\ol{i}} {e}_{j} , \quad
	T_{i}^{-1} ({f}_{\ol{j}}) 
	= -  {f}_{\ol{i}} {f}_{j} + {f}_{j}  {f}_{\ol{i}}    
	\quad \mbox{ for}\enspace |i - j| = 1; \\
&
	T_{i}^{-1} (e_{j}) = e_{j}, \quad
	T_{i}^{-1} (f_{j}) = f_{j}, \quad
	T_{i}^{-1} ({e}_{\ol{j}}) = {e}_{\ol{j}}, \quad
	T_{i}^{-1} ({f}_{\ol{j}}) = {f}_{\ol{j}} 
	\quad \mbox{ for}\enspace |i - j| > 1.
\end{align*}	
\end{prop}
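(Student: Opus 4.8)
The plan is to verify directly that the assignments on generators given in the statement extend to well-defined superalgebra endomorphisms of $\Uqn$, then to check they respect the braid relations \eqref{T_relation}, and finally to check that $T_i$ and $T_i^{-1}$ as defined are mutually inverse — which in particular shows each $T_i$ is an automorphism. Since $\Uqn$ is presented by generators and relations (QS1)--(QS6) of Proposition~\ref{classdefine}, to see that the formulas define an algebra homomorphism it suffices to show that applying $T_i$ to each defining relation yields an identity that already holds in $\Uqn$. I would organize this relation by relation: (QS1) follows because $T_i$ permutes the $h_j$ by $s_i$ and the $h_{\ol j}$ likewise, and $s_i$ preserves the pairing $\delta_{i,j}$; (QS2) and (QS3) follow from the compatibility \eqref{action_dual}--\eqref{action_dual_ep} of the $\fS_n$-action on roots with the permutation of Cartan generators, exactly as in the classical covering-Weyl-group picture of \cite{CP}; (QS4) is the most delicate bracket check, since e.g. $[e_i,f_i]=h_i-h_{i+1}$ must be sent to $[{-}f_i,{-}e_i]=[f_i,e_i]=-(h_i-h_{i+1})=h_{i+1}-h_i=h_{s_i(i)}-h_{s_i(i+1)}$, and the mixed brackets $[e_{\ol i},f_j]$ etc. need the odd analogues; (QS5) and (QS6), being the Serre-type and commutation relations for $|i-j|=1$ and $|i-j|>1$, reduce to short computations with the quadratic expressions $-e_ie_j+e_je_i$ and their odd counterparts, using the classical commutation relations repeatedly.

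A cleaner route for several of these checks is to import them from the quantum type $A$ picture already recorded in Proposition~\ref{braid_on_gn}: the even subalgebra of $\Uqn$ generated by $h_i,e_j,f_j$ is (a degeneration of) $\Uvsln$, and the formulas for $T_i$ on the even generators in Proposition~\ref{action_2} are exactly the $v\to 1$ specializations of Lusztig's formulas in Proposition~\ref{braid_on_gn} (with $K_i\mapsto 1$, so $T_i(e_i)=-f_i$ rather than $-f_iK_i$). Thus the verification of all relations involving only the even generators can be quoted from \cite{Lus}, and one is left only with the relations involving at least one odd generator, namely the odd parts of (QS1), (QS3), (QS4), (QS5), (QS6). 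For these I would use the second half of (QS5) --- $[e_i,e_{i+1}]=[e_{\ol i},e_{\ol{i+1}}]$, $[e_i,e_{\ol{i+1}}]=[e_{\ol i},e_{i+1}]$ and their $f$-analogues --- which lets one rewrite the odd quadratic expressions $-e_ie_{\ol j}+e_{\ol j}e_i$ in terms of the even ones, so that the odd-generator checks become formal consequences of the even ones together with (QS3)--(QS4).

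Next I would establish the braid relations. For $|i-j|>1$ the generators moved by $T_i$ and $T_j$ have disjoint ``supports'' in the obvious sense, so $T_iT_j=T_jT_i$ is immediate on generators, hence on all of $\Uqn$. For $|i-j|=1$ the identity $T_iT_jT_i=T_jT_iT_j$ must be checked on the finitely many generators with indices in $\{i-1,i,i+1,j-1,j,j+1\}$; the others are fixed by all three maps. This is a bounded computation that, on the even generators, again coincides with the braid relation already verified inside $\Uvsln$ in Proposition~\ref{braid_on_gn}; on the odd generators one uses the same reductions to the even case via (QS5). I would present this as: it suffices to check on $e_i,e_{i+1},e_{\ol i},e_{\ol{i+1}},h_i,h_{i+1},h_{i+2},h_{\ol i},h_{\ol{i+1}},h_{\ol{i+2}}$ and apply $\omega$ for the $f$'s.

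Finally, composing the displayed formula for $T_i$ with that for $T_i^{-1}$ on each generator gives the identity map --- e.g. $T_i^{-1}T_i(e_j)=T_i^{-1}(-e_ie_j+e_je_i)=-(-f_i)(e_j)+(e_j)(-f_i)=\dots$, which one checks collapses to $e_j$ using (QS4) and (QS5) for $|i-j|=1$ --- so the two maps are mutually inverse and each $T_i$ is a bijective superalgebra homomorphism, i.e. an automorphism; hence the whole braid group $\fcB$ acts. I expect the main obstacle to be the $|i-j|=1$ braid relation on the odd generators: this is where the non-invariant-form peculiarity of $\qn$ could in principle cause a sign or a stray term to fail to cancel, and it is the step where the reduction via (QS5) must be applied most carefully. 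A secondary source of bookkeeping is keeping the Koszul signs straight when commuting odd elements past each other inside these compositions, though since every expression involved is a product of an even number of odd generators or lands in the even part, the signs turn out to be benign.
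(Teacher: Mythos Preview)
Your proposal is correct and follows essentially the same approach as the paper: the paper's proof is itself only a sketch that defers to the parallel quantum computation in Theorem~\ref{action_uvqn} (carried out in Lemmas~\ref{verify1}--\ref{verify2}), namely verifying directly that each $T_i$ preserves (QS1)--(QS6), checking the braid relations on generators, and using the anti-involution to pass from the $e$-side to the $f$-side. One slip in your illustrative inverse computation: for $|i-j|=1$ you wrote $T_i^{-1}(-e_ie_j+e_je_i)=-(-f_i)(e_j)+(e_j)(-f_i)$, but $T_i^{-1}(e_j)=-e_je_i+e_ie_j$ there, not $e_j$; with that correction the collapse to $e_j$ via (QS2) and (QS4) goes through as you indicate.
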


\begin{proof}
%\hlt{The result is an immediate result by applying Proposition \ref{action_even_2}, \ref{prop2}, \ref{prop3} and equation \eqref{cl_1}.}
The proof is similar to Theorem \ref{action_uvqn} in Section \ref{braid_uvqn},  so we give a sketch here.
And by a direct calculation, 
it is easy to verify that each $T_{i}$ holds the relations in (QS1)-(QS6).
On the other hand, 
the action of $T_i$'s on  each generator of $\Uqn$ satisfy the relations in \eqref{T_relation}.
%
%	Similar to Proposition \ref{prop2} and Proposition \ref{prop3}, for any generator {$X \in \{ h_i, e_j, f_j, h_{\ol{i}},e_{\ol{j}},f_{\ol{j}} 
 %\where 1\le i \le n, \  1\le j \le n-1\}$},  
 %it is easy to check that $T_i T_j T_i (X) = T_j T_i T_j (X) $ 
 %with  $|i-j|=1$, $T_i T_j (X) = T_j T_i (X)$ with $|i-j|>1$, and $T_i$ $ (1\le i \le n-1)$ hold the relations (QS1)-(QS6) in Proposition \ref{classdefine}. 
 %Hence each $T_i$ defines a homomorphism of the braid group 
 %into the group of superalgebra automorphisms of $\Uqn$.
\end{proof}

\begin{rem}\label{classicalnote2}
For any $1\leq i,j \leq n-1$, $\ |i-j|=1$, direct calculation shows that
\begin{align*}
 	T_iT_j(e_i)=e_j,\quad
 	T_iT_j(e_{\ol{i}})=e_{\ol{j}},\quad 
 	T_iT_j(f_i)=f_j,\quad 
 	T_iT_j(f_{\ol{i}})=f_{\ol{j}}.
\end{align*}	
\end{rem}
 
 Following \cite[Proposition 8.20]{JCJ} and with the same discussion,
we have the following.
\begin{cor}\label{classical_1}
Assume $w \in \fS_n$ and $\alpha_{i}, \alpha_{k} \in \Pi^+ $, such that $w(\alpha_{i})=\alpha_{k}$.
Then we have  
\begin{align*}
	T_w (e_i)=e_k,\quad
	T_w (e_{\ol{i}})=e_{\ol{k}},\quad
	T_w (f_i)=f_k,\quad
	T_w (f_{\ol{i}})=f_{\ol{k}}.
\end{align*}   
\end{cor}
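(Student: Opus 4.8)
The plan is to reduce Corollary~\ref{classical_1} to the rank-one computation in Remark~\ref{classicalnote2} by induction on the length $l(w)$, exactly in the spirit of \cite[Proposition 8.20]{JCJ}. First I would record the base case: if $l(w)=0$ then $w=1$, $\alpha_i=\alpha_k$, and there is nothing to prove; if $l(w)=1$ then $w=s_m$ for some $m$, and the condition $s_m(\alpha_i)=\alpha_k\in\Pi^+$ together with \eqref{action_dual} forces either $m\ne i$ with $|m-i|>1$ (so $k=i$ and $T_m$ fixes all four generators $e_i,e_{\ol i},f_i,f_{\ol i}$ by Proposition~\ref{action_2}), or $|m-i|=1$ and then $s_m(\alpha_i)=\alpha_m+\alpha_i$; but $\alpha_m+\alpha_i$ is not simple, so this subcase cannot occur. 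Hence $l(w)=1$ is already covered.

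For the inductive step, suppose $l(w)=t\ge 2$ and write a reduced expression $w=s_{i_1}\cdots s_{i_t}$, so that $w=s_{i_1}w'$ with $w'=s_{i_2}\cdots s_{i_t}$ reduced of length $t-1$, and by \eqref{br_relation2} we have $T_w=T_{i_1}T_{w'}$. The key observation is that $w'(\alpha_i)$ is again a simple root: since $\alpha_k=w(\alpha_i)=s_{i_1}(w'(\alpha_i))\in\Pi^+$ is simple, the root $w'(\alpha_i)$ is a positive root whose image under $s_{i_1}$ is simple, and the only positive roots sent by $s_{i_1}$ to a simple root are simple roots themselves (if $w'(\alpha_i)$ were a non-simple positive root, $s_{i_1}$ changes it by adding or subtracting $\alpha_{i_1}$, which cannot turn a non-simple positive root into a simple one unless $w'(\alpha_i)=\alpha_{i_1}$, but $s_{i_1}(\alpha_{i_1})=-\alpha_{i_1}$ is negative, contradiction). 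Write $w'(\alpha_i)=\alpha_l\in\Pi^+$. By the induction hypothesis applied to $w'$,
\begin{align*}
T_{w'}(e_i)=e_l,\quad T_{w'}(e_{\ol i})=e_{\ol l},\quad T_{w'}(f_i)=f_l,\quad T_{w'}(f_{\ol i})=f_{\ol l}.
\end{align*}
It then remains to apply $T_{i_1}$ and show $T_{i_1}(e_l)=e_k$, etc. Since $s_{i_1}(\alpha_l)=\alpha_k$ is simple, the rank-one analysis of the base case applies: either $|i_1-l|>1$, whence $k=l$ and $T_{i_1}$ fixes $e_l,e_{\ol l},f_l,f_{\ol l}$; or $|i_1-l|=1$, and then $s_{i_1}(\alpha_l)=\alpha_{i_1}+\alpha_l=\alpha_k$ would be non-simple, which is impossible; or $i_1=l$, but $s_{i_1}(\alpha_l)=-\alpha_l$ is negative, also impossible. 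Hence only $k=l$ occurs and $T_w(e_i)=T_{i_1}(e_l)=e_k$, and likewise for $e_{\ol i},f_i,f_{\ol i}$; this closes the induction.

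The only genuine content beyond bookkeeping is the root-combinatorial claim used twice above: \emph{if $u\in\fS_n$, $u=s_m u'$ reduced, and $u(\alpha_i)$ is a simple root, then $u'(\alpha_i)$ is a simple root and $s_m$ fixes it pointwise in the relevant sense, i.e.\ $s_m(u'(\alpha_i))=u'(\alpha_i)$.} This is a standard fact about Coxeter groups of type $A$ (a reduced product $s_m u'$ with $l(s_m u')>l(u')$ satisfies $u'^{-1}(\alpha_m)\in\Phi^+$, so $s_m$ permutes the positive roots other than $\alpha_m$, and a simple root in the image must have come from a simple root fixed by $s_m$); I would either cite it from \cite{DDPW} or give the two-line argument with \eqref{action_dual}. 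This is the step I expect to require the most care, but it is purely about the Weyl group $\fS_n$ and does not touch the superalgebra structure, so once it is in place the rest is a mechanical transport along $T_w=T_{i_1}T_{w'}$ using Proposition~\ref{action_2} and Remark~\ref{classicalnote2}. Everything else—well-definedness of $T_w$, multiplicativity—has already been established in \eqref{br_relation1} and \eqref{br_relation2}.
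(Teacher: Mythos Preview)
Your inductive strategy has a genuine gap: the claim that $w'(\alpha_i)$ is again a simple root is false. Take $n=3$, $w=s_1s_2$, $i=1$. Then $w(\alpha_1)=s_1(\alpha_1+\alpha_2)=\alpha_2$ is simple, but with $i_1=1$, $w'=s_2$ one gets $w'(\alpha_1)=\alpha_1+\alpha_2$, which is not simple. The sentence ``$s_{i_1}$ changes it by adding or subtracting $\alpha_{i_1}$, which cannot turn a non-simple positive root into a simple one'' is precisely what fails: $s_{i_1}(\alpha_{i_1}+\alpha_{i_1\pm 1})=\alpha_{i_1\pm 1}$ is simple. Consequently the boxed ``combinatorial claim'' in your last paragraph is false as stated, and the induction as written never reaches Remark~\ref{classicalnote2} at all --- you conclude $k=l$ in every case, so the identity $T_iT_j(e_i)=e_j$ for $|i-j|=1$ is never invoked.

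The argument the paper points to (\cite[Proposition 8.20]{JCJ}) fixes this by allowing a two-step reduction. Peeling from the left, write $w=s_{i_1}w'$ reduced; one checks $i_1\neq k$. If $|i_1-k|>1$ then indeed $w'(\alpha_i)=\alpha_k$ and your one-step induction works. If $|i_1-k|=1$, set $w''=s_k s_{i_1}w$. A short computation in type~$A$ (using that $w^{-1}(\alpha_{i_1})$ is a negative root and $w^{-1}(\alpha_k)=\alpha_i$) shows $(w')^{-1}(\alpha_k)<0$, hence $l(w'')=l(w)-2$; moreover $w''(\alpha_i)=s_ks_{i_1}(\alpha_k)=\alpha_{i_1}$ is simple. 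Now induction gives $T_{w''}(e_i)=e_{i_1}$ (and likewise for $e_{\ol i},f_i,f_{\ol i}$), and then $T_w=T_{i_1}T_kT_{w''}$ together with Remark~\ref{classicalnote2} yields $T_w(e_i)=T_{i_1}T_k(e_{i_1})=e_k$. This is where the content of Remark~\ref{classicalnote2} actually enters; your write-up should be reorganised so that the adjacent case $|i_1-k|=1$ is handled by this two-step peel rather than by the (incorrect) assertion that it cannot occur.
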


%Notice that it is different from the case of $\Uvqn$.
%And in the following section, we will use the braid group action to define the root vectors of $\Uvqn$ and $\Uqn$.

\section{The root vectors of $\Uqn$}\label{root_2}

In this section, we will study the root vectors of the queer superalgebra $\Uqn$.
Let's start with the relations between the positive roots and the longest element $w_0$ in the Weyl group $\fS_n$ of $\qn$.

We denote $N = l(w_0)$.
For convenience, 
for any {$t \ge 0$} 
and  {$\bs{\sigma}  = ({i_1}, {i_2}, \cdots {i_t})  \in \NN^{t}$}, we  denote {$w^{\bs{\sigma}} = s_{i_1}s_{i_2} \cdots s_{i_t}$}.
Notice that $w^{\bs{\sigma}} = 1$ for the case  {$t = 0$}. 
When  {$\bs{\sigma}  = ({i_1}, {i_2}, \cdots {i_N})  \in \NN^{N}$} and 
 $w_0 = w^{\bs{\sigma}} = s_{i_1}s_{i_2} \cdots s_{i_N}$  is a reduced expression,
as {$\f{sl}_n$} and {$\qn$}  have the same  simple positive roots, 
following \cite{CP}, by the action of the Weyl group on {$\f{q}_n$}, 
the set of  positive roots of {$\f{q}_n$} associated with  ${\bs{\sigma}}$ can be described as
\begin{equation}\label{q_nroot}
\begin{aligned}
\Delta^+_{\bs{\sigma}} = \{
\beta_{\bs{\sigma},1} =\alpha_{i_1},\quad 
\beta_{\bs{\sigma},2} =s_{i_1}(\alpha_{i_2}),\quad
\cdots , \quad
\beta_{\bs{\sigma},N} =s_{i_1}s_{i_2}\dots s_{i_{N-1}}(\alpha_{i_N})  \}.
\end{aligned}
\end{equation}

\begin{rem}
According to \cite{DDPW} and \cite[Section 2]{DW1}, 
%$\Delta^+_{\bs{\sigma}} $ equals to  the set {$\Phi^+$} of positive roots of {$\qn$}.
%That is, 
\begin{equation}\label{root2}
\begin{aligned}
\sharp \Delta^+_{\bs{\sigma}}  = \sharp \Phi^+ = \frac{n(n-1)}{2}, \quad 
\Delta^+_{\bs{\sigma}}  = \Phi^+
&= \{ \alpha_{i,j}={\bs{\ep}}_i -{\bs{\ep}}_j |1\le i <j \le n \},    	
\end{aligned}
\end{equation}
hence the set {$ \Delta^+_{\bs{\sigma}}  $} does not depend on the choosen of {$ {\bs{\sigma}}  $} and we may denote it as {$ \Delta^+$}.
\end{rem}

\begin{defn}\label{Uqnroot}
	Fix a reduced decomposition 
	$w_0 = s_{i_1}s_{i_2} \cdots s_{i_N}$ 
	of the longest element in the Weyl group of $\f{q}_n$. 
	Define the positive root vectors 
	${\genxe}^{\bs{\sigma}}_{k}$,  ${\ol{\gene}}^{\bs{\sigma}}_{k}$ and negative root vectors ${\genxf}^{\bs{\sigma}}_{k}$, ${\ol{\genf}}^{\bs{\sigma}}_{k}$  of {$\Uqn$} as ($1\leq k\leq N$)
\begin{equation}\label{classicalroot}
\begin{aligned}
	&{\genxe}^{\bs{\sigma}}_{k}
	=T_{i_1}T_{i_2}\cdots T_{{i_k}-1}(e_{i_k}),\quad
	{\ol{{\gene}}}^{\bs{\sigma}}_{k}
	=T_{i_1}T_{i_2}\cdots T_{{i_k}-1}(e_{\ol{i_k}}),
	\\
	&{\genf}^{\bs{\sigma}}_{k}=T_{i_1}T_{i_2}\cdots T_{{i_k}-1}(f_{i_k})
	,\quad
	{\ol{{\genf}}}^{\bs{\sigma}}_{k}
	=T_{i_1}T_{i_2}\cdots T_{{i_k}-1}(f_{\ol{i_k}}) .		
\end{aligned}
\end{equation}
\end{defn}

For any ${\beta}_{\bs{\sigma},t }=\alpha_{i,j} \in \Phi^+ \cup \Phi^-$  ({$1\le i <  j \le n$}),  we can denote
\begin{align*}
  &{\genxe}^{\bs{\sigma}}_{i,j} = {\gene}^{\bs{\sigma}}_{t},  \quad
 \ol{{\genxe}}^{\bs{\sigma}}_{i,j} = \ol{\gene}^{\bs{\sigma}}_{t}, 
\\
& {\gene}^{\bs{\sigma}}_{j,i} = {\genf}^{\bs{\sigma}}_{t},\quad
\ol{{\gene}}^{\bs{\sigma}}_{j,i} =  	\ol{\genf}^{\bs{\sigma}}_{t}.
\end{align*}
 And we denote the set of  positive and negative root vectors as
\begin{equation}\label{eq_basis}	
\begin{aligned}
&\psi^+_{\bs{\sigma}}
 = \{ {\gene}^{\bs{\sigma}}_{t},    {\ol{\gene}}^{\bs{\sigma}}_{t} \where 1 \le t \le N\}
 = \{ {\genxe}^{\bs{\sigma}}_{i,j},   {\ol{\genxe}}^{\bs{\sigma}}_{i,j} \where 1 \le i<j \le N\},\\
&\psi^-_{\bs{\sigma}} 
=  \{  {\genf}^{\bs{\sigma}}_{t}, {\ol{\genf}}^{\bs{\sigma}}_{t}  \where 1 \le t \le N \}
 = \{ {\genxf}^{\bs{\sigma}}_{j,i},   {\ol{\genxf}}^{\bs{\sigma}}_{j,i} \where 1 \le i<j \le N\}.
\end{aligned}
\end{equation}

\begin{exam}
Fix $n=3$, there are two choices for reduced decomposition of $w_0$.
\begin{enumerate}
\item
 If we choose {$\bs{\sigma}$} = (1,2,1) and  $w_0 = w^{\bs{\sigma}} = s_1 s_2 s_1$,
the positive roots are 
\begin{align*}
\beta_{\bs{\sigma}, 1} = \alpha_1 = \alpha_{1,2}, \quad
\beta_{\bs{\sigma}, 2} = s_1(\alpha_2) = \alpha_{1,3}, \quad
\beta_{\bs{\sigma}, 3} = s_1 s_2 (\alpha_1) = \alpha_{2,3},
\end{align*} 
while the corresponding  positive root vectors are
\begin{align*}
&{\genxe}^{\bs{\sigma}}_{1,2}  = e_1 ,
\quad {\genxe}^{\bs{\sigma}}_{1,3}  =  T_1 (e_2) = -e_1 e_2 + e_2 e_1 , 
\quad {\genxe}^{\bs{\sigma}}_{2,3}  = T_1 T_2 (e_1) = e_2 ,\\
&\ol{\genxe}^{\bs{\sigma}}_{1,2}  = e_{\ol{1}} ,
\quad \ol{\genxe}^{\bs{\sigma}}_{1,3} = T_1 (e_{\ol{2}}) = -e_1 e_{\ol{2}} + e_{\ol{2}} e_1 , 
\quad \ol{\genxe}^{\bs{\sigma}}_{2,3} = T_1 T_2 (e_{\ol{1}}) = e_{\ol{2}}.
\end{align*}
\item
If we choose  {$\bs{\gamma}$} = (2,1,2) and  $w_0 = w^{\bs{\gamma}} = s_2 s_1 s_2$, 
then
\begin{align*}
\beta_{\bs{\gamma}, 1}  = \alpha_2 = \alpha_{2,3}, \quad
\beta_{\bs{\gamma}, 2} = s_2(\alpha_1) = \alpha_{1,3}, \quad
\beta_{\bs{\gamma}, 3} = s_2 s_1 (\alpha_2) = \alpha_{1,2},
\end{align*} 
and the corresponding positive root vectors are
\begin{align*}
&{\genxe}^{\bs{\gamma}}_{2,3}  = e_2 ,
\quad {\genxe}^{\bs{\gamma}}_{1,3}  = T_2 (e_1) = -e_2 e_1 + e_1 e_2 , 
\quad {\genxe}^{\bs{\gamma}}_{1,2}  = T_2 T_1 (e_2) = e_1 ,\\
&{\ol{\genxe}}^{\bs{\gamma}}_{2,3}  =   e_{\ol{2}} ,
\quad {\ol{\genxe}}^{\bs{\gamma}}_{1,3}  =   T_2 (e_{\ol{1}}) = -e_2 e_{\ol{1}} + e_{\ol{1}} e_2 , 
\quad {\ol{\genxe}}^{\bs{\gamma}}_{1,2}  =   T_2 T_1 (e_{\ol{2}}) =  e_{\ol{1}}.
\end{align*}
\end{enumerate}

\end{exam}

In the following of this paper, 
we fix $ \bs{\sigma} $ for a   reduced decomposition  of the longest element {$\fS_{n}$} as follows:
\begin{equation}\label{w_0_fix}
\begin{aligned}
& \bs{\sigma} = ( {1}, \quad  {2},  1,  \quad 3, 2, 1,  \quad \cdots ,  \quad n-1,  n-2,  \cdots,  2,  1) \in \NN^N , \\
& w_0 = w^ {\bs{\sigma}}  =(s_1)\refdot(s_2s_1)\refdot(s_3s_2s_1)\refdot\cdots\refdot (s_{n-1}\cdots s_1).
\end{aligned}
\end{equation} 

By a direct computation,
we conclude the following lemma.
\begin{lem}\label{lemroot}
Suppose that $1\le i <  j \le n$. If $j=i+1$, then 
\begin{align*}
\beta_{\bs{\sigma}, \frac{i(i+1)}{2}}=(s_1)\refdot (s_2 s_1)\refdot\cdots \refdot(s_i s_{i-1}\cdots s_2)(\alpha_1)=\alpha_{i,i+1};
\end{align*}
if $j>i+1$, then 
\begin{align*}
\beta_{\bs{\sigma}, \frac{(j-2)(j-1)}{2}+i-1}=(s_1)\refdot (s_2 s_1)\refdot\cdots \refdot (s_{j-1} s_{j-2}\cdots s_{j-i+1})(\alpha_{j-i})=\alpha_{i,j}.
\end{align*}
\end{lem}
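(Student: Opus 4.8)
The statement is a purely combinatorial fact about the Weyl group $\fS_n$ and the specific reduced word $\bs{\sigma}$ fixed in \eqref{w_0_fix}; it follows from \eqref{q_nroot} and \eqref{action_dual_ep} once one tracks the partial products carefully. The plan is to proceed by direct computation, organizing the reduced word $\bs{\sigma}$ into its natural blocks $(1),(2,1),(3,2,1),\dots,(n-1,n-2,\dots,1)$, so that the prefix of $\bs{\sigma}$ ending just before the position of interest is exactly a product of complete blocks $(s_1)\refdot(s_2s_1)\refdot\cdots\refdot(s_{m-1}\cdots s_1)$ times a partial block $s_{m}s_{m-1}\cdots s_{\ell}$. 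First I would verify the indexing: position $\frac{i(i+1)}{2}$ lies at the end of the $i$-th block when $j=i+1$, so the relevant prefix is $(s_1)\refdot(s_2s_1)\refdot\cdots\refdot(s_{i-1}\cdots s_1)$ followed by $s_i s_{i-1}\cdots s_2$ within the $i$-th block, and the simple reflection applied is $s_1$ (the first letter of that block is $\alpha_1$'s index). Similarly, position $\frac{(j-2)(j-1)}{2}+i-1$ sits inside the $(j-1)$-st block: $\frac{(j-2)(j-1)}{2}$ is the end of block $j-2$, and $i-1$ further steps move us to just before applying the letter $s_{j-i}$, with prefix $(s_1)\refdot\cdots\refdot(s_{j-2}\cdots s_1)\refdot(s_{j-1}s_{j-2}\cdots s_{j-i+1})$; the simple reflection applied next is $s_{j-i}$.

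Next I would compute the root itself using \eqref{action_dual_ep}, which says $s_k(\bs{\ep}_\ell)=\bs{\ep}_{s_k(\ell)}$, hence $w(\alpha_{j-i})=w(\bs{\ep}_{j-i}-\bs{\ep}_{j-i+1})=\bs{\ep}_{w(j-i)}-\bs{\ep}_{w(j-i+1)}$ for the relevant prefix $w$. So everything reduces to evaluating the permutation $w=(s_1)\refdot(s_2s_1)\refdot\cdots\refdot(s_{j-1}s_{j-2}\cdots s_{j-i+1})$ on the two points $j-i$ and $j-i+1$ (and, in the $j=i+1$ case, $w=(s_1)\refdot\cdots\refdot(s_is_{i-1}\cdots s_2)$ on the points $1$ and $2$). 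This is a clean induction on the number of blocks: a single complete block $s_{m}s_{m-1}\cdots s_1$ is the cycle sending $m+1\mapsto 1$ and $\ell\mapsto\ell+1$ for $1\le\ell\le m$, and composing successive blocks one sees by induction that the product $(s_1)\refdot(s_2s_1)\refdot\cdots\refdot(s_{m}\cdots s_1)$ fixes everything above $m+1$ and cyclically permutes $\{1,\dots,m+1\}$ appropriately; appending the partial block $s_{j-1}\cdots s_{j-i+1}$ then maps $j-i\mapsto i$ and $j-i+1\mapsto j$, which yields $w(\alpha_{j-i})=\bs{\ep}_i-\bs{\ep}_j=\alpha_{i,j}$. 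In the diagonal case $j=i+1$ the partial block $s_i\cdots s_2$ sends $1\mapsto 1$... wait, more precisely one checks $w(1)=i$ and $w(2)=i+1$, giving $\alpha_{i,i+1}$.

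The main obstacle is bookkeeping: keeping the block boundaries, the off-by-one in ``$T_{i_1}\cdots T_{i_{k-1}}$'' (one applies the prefix \emph{before} the $k$-th letter, per \eqref{q_nroot}), and the direction in which the partial block acts, all consistent. I expect no conceptual difficulty—once the permutation $w$ is identified as the explicit cycle described above, the identities $w(j-i)=i$, $w(j-i+1)=j$ are immediate—but the proof should be written by first establishing a clean lemma on the action of a complete block and of a prefix of complete blocks on $\{1,\dots,n\}$, then reading off the two displayed formulas as corollaries. I would present the computation of $\beta_{\bs{\sigma},\frac{i(i+1)}{2}}$ and $\beta_{\bs{\sigma},\frac{(j-2)(j-1)}{2}+i-1}$ in parallel, noting that the first is really the special case $j=i+1$ of a uniform statement, and invoke \eqref{action_dual_ep} only at the very end.
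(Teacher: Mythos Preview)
Your approach is correct and essentially the same as the paper's: both reduce via \eqref{action_dual_ep} to tracking the permutation action of the relevant prefix on the two indices $j-i$ and $j-i+1$ (respectively $1$ and $2$ in the first case), then compute this action block by block. One bookkeeping slip to fix when you write it out: with the standard convention $(\sigma\tau)(x)=\sigma(\tau(x))$ used in the paper, the complete block $s_m s_{m-1}\cdots s_1$ sends $1\mapsto m+1$ and $\ell\mapsto\ell-1$ for $2\le\ell\le m+1$, which is the inverse of the cycle you described; once this is corrected the rest of your outline goes through exactly as in the paper's direct computation.
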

    
\begin{proof}
By the group action in (\ref{action_dual_ep}), we see that  
\begin{align*}
&(s_1)\refdot (s_2 s_1)\refdot\cdots \refdot (s_i s_{i-1}\cdots s_2)(\alpha_1)\\
&=(s_1)\refdot (s_2 s_1)\refdot\cdots \refdot(s_i s_{i-1}\cdots s_2)({\bs{\ep}}_1-{\bs{\ep}}_2)\\
&=(s_1)\refdot (s_2 s_1)\refdot\cdots \refdot(s_i s_{i-1}\cdots s_2)({\bs{\ep}}_1)
-(s_1)\refdot (s_2 s_1)\refdot\cdots \refdot (s_i s_{i-1}\cdots s_2)({\bs{\ep}}_2)\\
&={\bs{\ep}}_{(s_1)\refdot (s_2 s_1)\refdot\cdots \refdot (s_i s_{i-1}\cdots s_2)(1)}
-{\bs{\ep}}_{(s_1)\refdot (s_2 s_1)\refdot\cdots \refdot(s_i s_{i-1}\cdots s_2)(2)}\\
&={\bs{\ep}}_{(s_1)\refdot (s_2 s_1)\refdot\cdots\refdot (s_{i-1}\cdots s_1)(1)}
-{\bs{\ep}}_{(s_1)\refdot (s_2 s_1)\refdot\cdots\refdot ( s_{i-1}\cdots s_1)(i+1)}\\
&={\bs{\ep}}_{(s_1)\refdot (s_2 s_1)\refdot\cdots\refdot (s_{i-2}\cdots s_1)(i)}
-{\bs{\ep}}_{i+1}\\	
&={\bs{\ep}}_{i}-{\bs{\ep}}_{i+1}\\
&=\alpha_{i,i+1}.
\end{align*}	
Hence, we have proved the first equation. For the second equation, we can prove it directly. 
For any $1\le i , j \le n$  with $j-i>1$, 
\begin{align*}
&(s_1)\refdot (s_2 s_1)\refdot\cdots \refdot (s_{j-1} s_{j-2}\cdots s_{j-i+1})(\alpha_{j-i})\\
&=(s_1)\refdot (s_2 s_1)\refdot\cdots \refdot (s_{j-1} s_{j-2}\cdots s_{j-i+1})({\bs{\ep}}_{j-i}-{\bs{\ep}}_{j-i+1})\\
&=(s_1)\refdot (s_2 s_1)\refdot\cdots \refdot (s_{j-1} s_{j-2}\cdots s_{j-i+1})({\bs{\ep}}_{j-i})\\
&\qquad-(s_1)\refdot (s_2 s_1)\refdot\cdots \refdot (s_{j-1} s_{j-2}\cdots s_{j-i+1})({\bs{\ep}}_{j-i+1})\\
&={\bs{\ep}}_{(s_1)\refdot (s_2 s_1)\refdot\cdots \refdot (s_{j-1} s_{j-2}\cdots s_{j-i+1})(j-i)}
-{\bs{\ep}}_{(s_1)\refdot (s_2 s_1)\refdot\cdots \refdot (s_{j-1} s_{j-2}\cdots s_{j-i+1})(j-i+1)}\\
&={\bs{\ep}}_{(s_1)\refdot (s_2 s_1)\refdot\cdots \refdot ( s_{j-2}\cdots s_{j-i-1})(j-i)}
-{\bs{\ep}}_{j}\\
&={\bs{\ep}}_{(s_1)\refdot (s_2 s_1)\refdot\cdots \refdot ( s_{j-3}\cdots s_{j-i-2})(j-i-1)}
-{\bs{\ep}}_{j}\\
&\quad \vdots \\
&={\bs{\ep}}_{(s_1)\refdot (s_2 s_1)\refdot\cdots \refdot ( s_{i}\cdots s_{1})(2)}
-{\bs{\ep}}_{j}\\
&={\bs{\ep}}_{(s_1)\refdot (s_2 s_1)\refdot\cdots \refdot ( s_{i-1}\cdots s_{1})(1)}
-{\bs{\ep}}_{j}\\
&={\bs{\ep}}_{i}-{\bs{\ep}}_{j}\\
&=\alpha_{i,j}.
\end{align*}		
\end{proof}

\begin{cor}\label{classical_2_0}
Assume that $1\leq i< n$, $i+1<  j< n$. Then we have
\begin{align*}
&{\genxe}^{\bs{\sigma}}_{i,i+1}=e_i,\quad 
      {\ol{\genxe}^{\bs{\sigma}}_{i,i+1}}=e_{\overline{i}},\quad
      {\genxf}^{\bs{\sigma}}_{i+1,i}=f_i,\quad 
      {\ol{\genxf}^{\bs{\sigma}}_{i+1,i }}=f_{\overline{i}}, \\
&{\genxe}^{\bs{\sigma}}_{i,j}
=-{\genxe}^{\bs{\sigma}}_{i,j-1} {\genxe}^{\bs{\sigma}}_{j-1,j}+ {\genxe}^{\bs{\sigma}}_{j-1,j}{\genxe}^{\bs{\sigma}}_{i,j-1}
     	=[ 
     	   \cdots
     	   [ 
     	     [e_i, -e_{i+1}], -e_{i+2} 
     	              ], 
     	               \cdots ,-e_{j-1} ],\\
&{\ol{\genxe}}^{\bs{\sigma}}_{i,j}
=-{\genxe}^{\bs{\sigma}}_{i,j-1} {\ol{\genxe}}^{\bs{\sigma}}_{j-1,j} + {\ol{\genxe}}^{\bs{\sigma}}_{j-1,j} {\genxe}^{\bs{\sigma}}_{i,j-1}
     	 =[ 
     	 \cdots
     	    [ 
     	          [e_i, -e_{i+1}], -e_{i+2} 
     	                    ], 
     	 \cdots ,-e_{\ol{j-1}} ],\\
&{\genxf}^{\bs{\sigma}}_{j,i}
=-{\genxf}^{\bs{\sigma}}_{j,j-1}  {\genxf}^{\bs{\sigma}}_{j-1,i}+{\genxf}^{\bs{\sigma}}_{j-1,i} {\genxf}^{\bs{\sigma}}_{j,j-1}
     	 =[-f_{j-1}, 
     	       [-f_{j-2}, 
     	          \cdots 
     	           [-f_{i+1}, f_i] 
     	                 ] 
     	                   \cdots
     	                                      ],\\
 &{\ol{\genxf}}^{\bs{\sigma}}_{j,i}
 =-{\ol {\genxf}^{\bs{\sigma}}_{j,j-1}}  {\genxf}^{\bs{\sigma}}_{j-1,i}+{\genxf}^{\bs{\sigma}}_{j-1,i}{\ol {\genxf}^{\bs{\sigma}}_{j,j-1}}
=[-f_{\ol{j-1}}, 
     	      [-f_{j-2}, 
     	        \cdots 
     	          [-f_{i+1}, f_i] 
     	                    ]
     	                       \cdots
     	                             ].
\end{align*}    
\end{cor}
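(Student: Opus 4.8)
The plan is to derive everything in Corollary~\ref{classical_2_0} directly from the definition of the root vectors in \eqref{classicalroot}, the explicit reduced decomposition \eqref{w_0_fix}, and the combinatorial identification of the roots carried out in Lemma~\ref{lemroot}. First I would treat the base case $j = i+1$. By Lemma~\ref{lemroot} the root $\alpha_{i,i+1}$ occupies position $\frac{i(i+1)}{2}$ in $\Delta^+_{\bs{\sigma}}$, and the prefix of $T$'s appearing in \eqref{classicalroot} for that index is exactly $T_1 \refdot (T_2 T_1) \refdot \cdots \refdot (T_i T_{i-1} \cdots T_2)$, applied to $e_1$ (resp. $e_{\ol 1}, f_1, f_{\ol 1}$). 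Now $T_1(\alpha_1) = -\alpha_1$ and, as computed in the proof of Lemma~\ref{lemroot}, the word $(s_1)\refdot(s_2 s_1)\refdot\cdots\refdot(s_i s_{i-1}\cdots s_2)$ sends $\alpha_1$ to $\alpha_{i,i+1} \in \Pi^+$; hence by Corollary~\ref{classical_1} (applied with $w$ the corresponding group element — note the length additivity needed to invoke \eqref{br_relation2} is guaranteed because $w_0 = w^{\bs\sigma}$ is reduced, so every prefix is reduced) we get $T_w(e_1) = e_i$, and likewise for $e_{\ol 1}, f_1, f_{\ol 1}$. This gives the first line of the corollary.

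Next I would handle the case $j > i+1$ by an induction that ``peels off'' one simple reflection at a time. By Lemma~\ref{lemroot}, $\alpha_{i,j}$ sits at index $\frac{(j-2)(j-1)}{2} + i - 1$, so its prefix of braid operators is $\mathbf{T} := T_1 \refdot (T_2 T_1)\refdot\cdots\refdot(T_{j-1} T_{j-2}\cdots T_{j-i+1})$, applied to $e_{j-i}$. Similarly $\alpha_{i,j-1}$ sits at index $\frac{(j-3)(j-2)}{2} + i - 1$ with prefix $\mathbf{T}' := T_1\refdot\cdots\refdot(T_{j-2}\cdots T_{j-i})$ applied to $e_{j-i}$; and $\alpha_{j-1,j} = \alpha_{j-1}$ has, again by Lemma~\ref{lemroot}, prefix $\mathbf{T}'' := T_1\refdot\cdots\refdot(T_{j-1}\cdots T_2)$ applied to $e_1$ — but by the base case already established, $\mathbf{T}''(e_1) = e_{j-1}$. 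The key algebraic input is the single-step formula for $T_i$ from Proposition~\ref{action_2}: for $|i-j|=1$, $T_i(e_j) = -e_i e_j + e_j e_i = -[e_i, e_j]$ (using that $e_i, e_j$ are even), and the analogous formulas for $e_{\ol j}, f_j, f_{\ol j}$. I would write $\mathbf{T} = \mathbf{T}' \refdot (T_{j-1} T_{j-2}\cdots T_{j-i+1})$ and, using Remark~\ref{classicalnote2} / Corollary~\ref{classical_1} to move the long tail $T_{j-2}\cdots T_{j-i+1}$ past $e_{j-i}$ appropriately, reduce the computation of $\mathbf{T}(e_{j-i})$ to applying one more generator $T_{i'}$ (with $i' = j-1$ in the ``outermost'' position, or $i' = $ the relevant index, depending on how the bracket is organized) to the already-known $\mathbf{T}'(e_{j-i}) = \genxe^{\bs\sigma}_{i,j-1}$, which produces $-\genxe^{\bs\sigma}_{i,j-1}\,\genxe^{\bs\sigma}_{j-1,j} + \genxe^{\bs\sigma}_{j-1,j}\,\genxe^{\bs\sigma}_{i,j-1}$. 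Unwinding this recursion down to the base case $\genxe^{\bs\sigma}_{i,i+1} = e_i$ yields the iterated-bracket expression $[\cdots[[e_i,-e_{i+1}],-e_{i+2}],\cdots,-e_{j-1}]$. The cases $\ol{\genxe}, \genxf, \ol{\genxf}$ follow by the same recursion, using the corresponding single-step rules and, for the $f$'s, the anti-involution $\omega$ from the Remark after Proposition~\ref{classdefine} to transfer the $e$-statements (noting $\omega$ intertwines $T_w$ with $T_{w}$ suitably, or simply redoing the computation with the $f$-rules).

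The main obstacle I anticipate is bookkeeping: correctly tracking which braid operators in the prefix $\mathbf{T}$ act trivially on a given generator (those $T_k$ with $|k - (\text{current index})| > 1$, which by Proposition~\ref{action_2} fix it) versus which act by the length-one reflection or the bracket rule, so that the telescoping of $\mathbf{T}(e_{j-i})$ into one application of a single $T_{i'}$ to $\genxe^{\bs\sigma}_{i,j-1}$ is justified. This is where the precise shape of \eqref{w_0_fix} and the index formulas of Lemma~\ref{lemroot} must be used carefully; concretely, one checks that in the block $(T_{j-1} T_{j-2}\cdots T_{j-i+1})$ all factors except the last relevant one commute past the element being acted upon, having been arranged by the earlier blocks to sit in the ``$|i-j|>1$'' regime. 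Once this index arithmetic is pinned down, each of the four identities is a short induction on $j - i$, and the equality of the ``recursive'' form (first equality in each line) with the ``iterated bracket'' form (second equality) is immediate from unwinding. I would present the $e$-case in full and remark that $\ol{\genxe}, \genxf, \ol{\genxf}$ are entirely analogous.
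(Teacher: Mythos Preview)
Your approach is essentially the same as the paper's: identify the braid-group prefix attached to each $\alpha_{i,j}$ via Lemma~\ref{lemroot}, simplify it using the braid relations together with the elementary rules $T_iT_j(e_i)=e_j$ (Remark~\ref{classicalnote2}) and $T_i(e_j)=-e_ie_j+e_je_i$, and then read off the recursion. One refinement worth noting: your proposed factorization $\mathbf{T}=\mathbf{T}'\cdot(T_{j-1}\cdots T_{j-i+1})$ is not literally correct, because the prefix $\mathbf{T}'$ for $\alpha_{i,j-1}$ ends partway through the $(j-2)$-block of $\bs\sigma$, not at its end, so you cannot simply append the partial $(j-1)$-block. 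The paper resolves exactly this bookkeeping issue by first proving the clean closed form
\[
\genxe^{\bs\sigma}_{i,j}=T_iT_{i+1}\cdots T_{j-2}(e_{j-1})
\]
for all $j>i+1$, via a direct (but short) braid-relation manipulation of the full prefix; from this closed form the recursion $\genxe^{\bs\sigma}_{i,j+1}=(T_i\cdots T_{j-2})T_{j-1}(e_j)=-\genxe^{\bs\sigma}_{i,j}e_j+e_j\genxe^{\bs\sigma}_{i,j}$ is a one-liner. This is the concrete content hiding behind what you call ``telescoping,'' and it is the step you should write out explicitly.
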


\begin{proof}
By Definition \ref{Uqnroot} and Remark \ref{note_i_i1}, we see that  
\begin{align*}
{\genxe}^{\bs{\sigma}}_{i,i+1}
%&={\genxe}^{\bs{\sigma}}_{\alpha_{i,i+1}}\\
&=(T_1)\refdot (T_2T_1)\refdot\cdots \refdot (T_{i-1}T_{i-2}\cdots T_1)\refdot (T_iT_{i-1}\cdots T_3T_2)({e}_1)\\
&=(T_1)\refdot (T_2T_1)\refdot\cdots \refdot (T_{i-1}T_{i-2}\cdots T_2)\refdot (T_iT_{i-1}\cdots T_3)\refdot T_1T_2({e}_1)\\
&=(T_1)\refdot (T_2T_1)\refdot\cdots \refdot (T_{i-1}T_{i-2}\cdots T_2)\refdot (T_iT_{i-1}\dots T_3)({e}_2)\\
&=(T_1)\refdot (T_2T_1)\refdot\cdots \refdot (T_{i-1}T_{i-2}\cdots T_3)\refdot (T_iT_{i-1}\cdots T_4)\refdot T_2T_3({e}_2)\\
&=(T_1)\refdot(T_2T_1)\refdot\cdots \refdot (T_{i-1}T_{i-2}\cdots T_3)\refdot(T_iT_{i-1}\cdots T_4)({e}_3)\\
&\quad \vdots \\
&=(T_1)\refdot(T_2T_1)\refdot\cdots \refdot (T_{i-2}\cdots T_1)({e}_i)\\
&={e}_i.
\end{align*}
Then the first equation is proved.
More generally, for any $1\le i , j \le n$ ,$j>i+1$, we have 
\begin{align*}
{\genxe}^{\bs{\sigma}}_{i,j}
%&={\genxe}^{\bs{\sigma}}_{\alpha_{i,j}}\\
&=(T_1)\refdot(T_2T_1)\refdot\cdots \refdot (T_{i-1}\cdots T_1)\refdot (T_i\cdots T_1)\refdot\cdots \refdot (T_{j-1}\cdots T_{j-i+1})({e}_{j-i})\\
&=(T_1)\refdot(T_2T_1)\refdot\cdots \refdot (T_{i-1}\cdots T_2)\refdot [(T_i\cdots T_3)\refdot (T_1 T_2 T_1)]\refdot\cdots \refdot(T_{j-1}\cdots T_{j-i+1})({e}_{j-i})\\
&\quad \vdots \\
&=(T_1)\refdot(T_2T_1)\refdot \cdots \refdot (T_{i-1}\cdots T_2)\refdot(T_i\cdots T_1) \refdot \cdots \refdot (T_{j-1}\cdots T_{j-i+2} )\refdot T_{j-i}T_{j-i+1}({e}_{j-i})\\ 
&=(T_1)\refdot(T_2T_1)\refdot \cdots \refdot (T_{i-1}\cdots T_2)\refdot(T_i\cdots T_1) \refdot \cdots \refdot (T_{j-1}\cdots T_{j-i+2})({e}_{j-i+1})\\ 
&=(T_1)\refdot(T_2T_1)\refdot \cdots \refdot (T_{i-1}\cdots T_3)(T_i\cdots T_1) \refdot \cdots \refdot (T_{j-1}\cdots T_{j-i+3})({e}_{j-i+2})\\  
&\quad \vdots \\
&=(T_1)\refdot(T_2T_1)\refdot \cdots \refdot (T_{i-2}\cdots T_1)\refdot(T_i\cdots T_1) \refdot \cdots \refdot (T_{j-2}\cdots T_{1})({e}_{j-1})\\  
&=T_iT_{i+1}\cdots T_{j-2}({e}_{j-1}). 
\end{align*}
Applying the last equation by replacing $j$ with $j+1$, we have
\begin{align*}
{\genxe}^{\bs{\sigma}}_{i,j+1}
&=(T_iT_{i+1}\cdots T_{j-2})\refdot T_{j-1}({e}_{j})\\
&=(T_iT_{i+1}\cdots T_{j-2})(-{e}_{j-1}{e}_{j}
+ {e}_{j}{e}_{j-1})\\
&=-(T_iT_{i+1}\cdots T_{j-2})({e}_{j-1})\refdot (T_iT_{i+1}\cdots T_{j-2})({e}_{j})\\
 &\quad + (T_iT_{i+1}\cdots
  T_{j-2})({e}_{j})\refdot
  (T_iT_{i+1}\cdots T_{j-2})({e}_{j-1})\\
&=- {\genxe}^{\bs{\sigma}}_{i,j}{e}_{j}
 + {e}_{j}{\genxe}^{\bs{\sigma}}_{i,j}.
\end{align*} 
 And thus we have proved the equation for ${\genxe}^{\bs{\sigma}}_{i,j}$. 
 With a parallel argument, the remaining  formulas could be proved.	
\end{proof}

As a natural consesquence, we will provide super commutation formulas for these root vectors.
\begin{prop}
The following holds for $1\le i,j,k,l\le n$ satisfying $i<j, k<l$ in $\Uqn$:
\begin{enumerate}
\item 
\begin{align*}
&[{\genxe}_{i,j}^{\bs{\sigma}}, {\genxe}_{k,l}^{\bs{\sigma}}]=\left\{\begin{aligned}
&{\genxe}_{k,j}^{\bs{\sigma}}  \quad &\mbox{if }  i=l\enspace\& \enspace j\ne k,   \\
&- {\genxe}_{i,l}^{\bs{\sigma}}  \quad &\mbox{if }  j=k\enspace \& \enspace i\ne l,    \\
&0  \quad & \mbox{otherwise }; 
\end{aligned}\right.
\\
&[{\genxe}_{i,j}^{\bs{\sigma}}, {\genxe}_{l,k}^{\bs{\sigma}}]=\left\{\begin{aligned}
&{\genxe}_{l,j}^{\bs{\sigma}}  \quad &\mbox{if }  i=k\enspace\& \enspace j\ne l,   \\
&- {\genxe}_{i,k}^{\bs{\sigma}}  \quad &\mbox{if }  j=l\enspace \& \enspace i\ne k,    \\
&h_{{i}}-h_{{j}}\quad &\mbox{if }  i=k\enspace \& \enspace j\ne l,    \\
&0  \quad & \mbox{otherwise }; 
\end{aligned}\right.
\\
&[{\genxe}_{j,i}^{\bs{\sigma}}, {\genxe}_{l,k}^{\bs{\sigma}}]=\left\{\begin{aligned}
&-{\genxe}_{j,k}^{\bs{\sigma}}  \quad &\mbox{if }  i=l\enspace\& \enspace j\ne k,   \\
& {\genxe}_{i,l}^{\bs{\sigma}}  \quad &\mbox{if }  j=k\enspace \& \enspace i\ne l,    \\
&0  \quad & \mbox{otherwise }. 
\end{aligned}\right.
\end{align*}

\item
\begin{align*}
&[{\genxe}_{i,j}^{\bs{\sigma}}, {\ol{\genxe}}_{k,l}^{\bs{\sigma}}]=\left\{ \begin{aligned}
&{\ol{\genxe}}_{k,j}^{\bs{\sigma}}  \quad &\mbox{if }  i=l\enspace\& \enspace j\ne k,   \\
&- {\ol{\genxe}}_{i,l}^{\bs{\sigma}}  \quad &\mbox{if }  j=k\enspace \& \enspace i\ne l,   \\
&0  \quad & \mbox{otherwise }; 
\end{aligned}\right.
\\
&[{\genxe}_{i,j}^{\bs{\sigma}}, {\ol{\genxe}}_{l,k}^{\bs{\sigma}}]=\left\{ \begin{aligned}
&{\ol{\genxe}}_{l,j}^{\bs{\sigma}}  \quad &\mbox{if }  i=k\enspace\& \enspace j\ne l,   \\
&- {\ol{\genxe}}_{i,k}^{\bs{\sigma}}  \quad &\mbox{if }  j=l\enspace \& \enspace i\ne k,    \\
&h_{\ol{i}}-h_{\ol{j}}\quad &\mbox{if }  i=k\enspace \& \enspace j\ne l,    \\
&0  \quad & \mbox{otherwise }; 
\end{aligned}\right.
\\
&[{\ol{\genxe}}_{i,j}^{\bs{\sigma}}, {{\genxe}}_{l,k}^{\bs{\sigma}}]=\left\{ \begin{aligned}
&{\ol{\genxe}}_{l,j}^{\bs{\sigma}}  \quad &\mbox{if }  i=k\enspace\& \enspace j\ne l,   \\
&-{\ol{\genxe}}_{i,k}^{\bs{\sigma}}  \quad &\mbox{if }  j=l\enspace \& \enspace i\ne k,    \\
&h_{\ol{i}}-h_{\ol{j}}\quad &\mbox{if }  i=k\enspace \& \enspace j\ne l,    \\
&0  \quad & \mbox{otherwise }; 
\end{aligned}\right.
\\
&[{\genxe}_{j,i}^{\bs{\sigma}}, {\ol{\genxe}}_{l,k}^{\bs{\sigma}}]=\left\{ \begin{aligned}
&-{\ol{\genxe}}_{j,k}^{\bs{\sigma}}  \quad &\mbox{if }  i=l\enspace\& \enspace j\ne k,   \\
& {\ol{\genxe}}_{i,l}^{\bs{\sigma}}  \quad &\mbox{if }  j=k\enspace \& \enspace i\ne l,    \\
&0  \quad & \mbox{otherwise }. 
\end{aligned}\right.
\end{align*}

\item
\begin{align*}
&[{\ol{\genxe}}_{i,j}^{\bs{\sigma}}, {\ol{\genxe}}_{k,l}^{\bs{\sigma}}]=\left\{ \begin{aligned}
&-{{\genxe}}_{k,j}^{\bs{\sigma}}  \quad &\mbox{if }  i=l\enspace\& \enspace j\ne k,   \\
&- {{\genxe}}_{i,l}^{\bs{\sigma}}  \quad &\mbox{if }  j=k\enspace \& \enspace i\ne l,    \\
&0  \quad & \mbox{otherwise }; 
\end{aligned}\right.
\\
&[{\ol{\genxe}}_{i,j}^{\bs{\sigma}}, {\ol{\genxe}}_{l,k}^{\bs{\sigma}}]=\left\{ \begin{aligned}
&-{{\genxe}}_{l,j}^{\bs{\sigma}}  \quad &\mbox{if }  i=k\enspace\& \enspace j\ne l,   \\
&-{{\genxe}}_{i,k}^{\bs{\sigma}}  \quad &\mbox{if }  j=l\enspace \& \enspace i\ne k,    \\
&h_{{i}}+h_{{j}}\quad &\mbox{if }  i=k\enspace \& \enspace j\ne l,    \\
&0  \quad & \mbox{otherwise }; 
\end{aligned}\right.
\\
&[{\ol{\genxe}}_{j,i}^{\bs{\sigma}}, {\ol{\genxe}}_{l,k}^{\bs{\sigma}}]=\left\{ \begin{aligned}
&-{{\genxe}}_{j,k}^{\bs{\sigma}}  \quad &\mbox{if }  i=l\enspace\& \enspace j\ne k,   \\
& -{{\genxe}}_{i,l}^{\bs{\sigma}}  \quad &\mbox{if }  j=k\enspace \& \enspace i\ne l,    \\
&0  \quad & \mbox{otherwise }. 
\end{aligned}\right.
\end{align*}

\item 
\begin{align*}
&[h_k, {{\genxe}}_{i,j}^{\bs{\sigma}}]=\left\{ \begin{aligned}
&{{\genxe}}_{i,j}^{\bs{\sigma}}  \quad &\mbox{if }  k=i,   \\
& -{{\genxe}}_{i,j}^{\bs{\sigma}}  \quad &\mbox{if }  k=j,    \\
&0  \quad & \mbox{otherwise }; 
\end{aligned}\right.
\\
&[h_k, {{\genxe}}_{j,i}^{\bs{\sigma}}]=\left\{ \begin{aligned}
&-{{\genxe}}_{j,i}^{\bs{\sigma}}  \quad &\mbox{if }  k=i,   \\
&{{\genxe}}_{j,i}^{\bs{\sigma}}  \quad &\mbox{if }  k=j,    \\
&0  \quad & \mbox{otherwise }; 
\end{aligned}\right.
\\
&[h_k, {\ol{\genxe}}_{i,j}^{\bs{\sigma}}]=\left\{ \begin{aligned}
&{\ol{\genxe}}_{i,j}^{\bs{\sigma}}  \quad &\mbox{if }  k=i ,   \\
& -{\ol{\genxe}}_{i,j}^{\bs{\sigma}}  \quad &\mbox{if }  k=j,    \\
&0  \quad & \mbox{otherwise }; 
\end{aligned}\right.
\\
&[h_k, {\ol{\genxe}}_{j,i}^{\bs{\sigma}}]=\left\{ \begin{aligned}
&-{\ol{\genxe}}_{j,i}^{\bs{\sigma}}  \quad &\mbox{if }  k=i ,   \\
&{\ol{\genxe}}_{j,i}^{\bs{\sigma}}  \quad &\mbox{if }  k=j,    \\
&0  \quad & \mbox{otherwise }. 
\end{aligned}\right.
\end{align*}

\item 
\begin{align*}
&[h_{\ol{k}}, {{\genxe}}_{i,j}^{\bs{\sigma}}]=\left\{ \begin{aligned}
&{\ol{\genxe}}_{i,j}^{\bs{\sigma}}  \quad &\mbox{if }  k=i,   \\
& -{\ol{\genxe}}_{i,j}^{\bs{\sigma}}  \quad &\mbox{if }  k=j,    \\
&0  \quad & \mbox{otherwise };
\end{aligned}\right.
\\
&[h_{\ol{k}}, {{\genxe}}_{j,i}^{\bs{\sigma}}]=\left\{ \begin{aligned}
&-{\ol{\genxe}}_{j,i}^{\bs{\sigma}}  \quad &\mbox{if }  k=i,   \\
& {\ol{\genxe}}_{j,i}^{\bs{\sigma}}  \quad &\mbox{if }  k=j,    \\
&0  \quad & \mbox{otherwise }; 
\end{aligned}\right.
\\
&[h_{\ol{k}}, {\ol{\genxe}}_{i,j}^{\bs{\sigma}}]=\left\{ \begin{aligned}
&{{\genxe}}_{i,j}^{\bs{\sigma}}  \quad &\mbox{if }  k=i \enspace\mbox{or } k=j,   \\
&0  \quad & \mbox{otherwise }; 
\end{aligned}\right.
\\
&[h_{\ol{k}}, {\ol{\genxe}}_{j,i}^{\bs{\sigma}}]=\left\{ \begin{aligned}
&{{\genxe}}_{j,i}^{\bs{\sigma}}  \quad &\mbox{if }  k=i \enspace\mbox{or } k=j,   \\
&0  \quad & \mbox{otherwise }. 
\end{aligned}\right.
\end{align*}
\end{enumerate}
\end{prop}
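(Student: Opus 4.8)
The plan is to prove all the commutation formulas by reducing to the explicit expressions for the root vectors obtained in Corollary~\ref{classical_2_0}, together with the defining relations (QS1)--(QS6) of $\Uqn$ from Proposition~\ref{classdefine}. The key observation is that, via the map $\genxe_{i,j}^{\bs{\sigma}} \mapsto E_{i,j}$, $\genxf_{j,i}^{\bs{\sigma}} \mapsto E_{j,i}$, $\ol{\genxe}_{i,j}^{\bs{\sigma}} \mapsto E_{-i,j}$, $\ol{\genxf}_{j,i}^{\bs{\sigma}} \mapsto E_{-j,i}$, $h_k \mapsto E_{k,k}+E_{-k,-k}$, $h_{\ol k} \mapsto E_{-k,k}+E_{k,-k}$, all the asserted identities are exactly the matrix (super)commutation relations $[E_{a,b}, E_{c,d}] = \delta_{b,c} E_{a,d} - (-1)^{p p'} \delta_{a,d} E_{c,b}$ in $\qn$. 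So the real content is to verify that the root vectors defined in Definition~\ref{Uqnroot} do satisfy these relations inside $\Uqn$ — equivalently, that the root vectors $\genxe_{i,j}^{\bs{\sigma}}$, etc., behave like the elementary matrices. Since each $T_w$ is a superalgebra automorphism (Proposition~\ref{action_2}), it suffices to check the relations after applying $T_w^{-1}$ for a suitable $w$; this brings at least one of the two arguments back to a simple generator, and then one uses Corollary~\ref{classical_2_0} to rewrite the other argument as an iterated bracket of simple generators.

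Concretely, I would organize the proof as follows. First, reduce to the case where one of the indices among $i,j,k,l$ is as small as possible by applying an appropriate $T_w$; using Corollary~\ref{classical_1} and Remark~\ref{classicalnote2}, $T_w$ permutes the simple root vectors $e_i, f_i, e_{\ol i}, f_{\ol i}$ among themselves according to the Weyl group action, so the form of every asserted identity is preserved under $T_w$. Second, with one argument reduced to a generator such as $e_m$, $f_m$, $h_m$ or $h_{\ol m}$, expand the other root vector using the iterated-bracket formula from Corollary~\ref{classical_2_0} and compute the bracket by repeated application of the super Jacobi identity together with (QS1)--(QS6). The crucial lemma here is that $[e_m, \genxe_{i,j}^{\bs{\sigma}}]$ collapses to $\pm\genxe_{i',j'}^{\bs{\sigma}}$ or $0$ depending on whether $m$ equals $i-1$, $j-1$, or is far from $\{i-1,\dots,j-1\}$ — this follows by induction on $j-i$ from the recursion $\genxe_{i,j+1}^{\bs{\sigma}} = -\genxe_{i,j}^{\bs{\sigma}} e_j + e_j \genxe_{i,j}^{\bs{\sigma}}$ and the Serre-type relations in (QS6). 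The cases involving the odd generators $h_{\ol k}$, $e_{\ol j}$, $f_{\ol j}$ use the parity bookkeeping from (QS3)--(QS5), in particular the relation $[h_{\ol i}, e_{\ol j}] = e_j$ for $i \in \{j, j+1\}$, which is responsible for the appearance of $h_i \pm h_j$ (resp.\ $h_{\ol i} - h_{\ol j}$) on the right-hand sides in parts (2), (3), (5).

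I expect the main obstacle to be the bookkeeping in the ``diagonal'' cases — those producing $h_i - h_j$, $h_i + h_j$, $h_{\ol i} - h_{\ol j}$ — namely the brackets $[\genxe_{i,j}^{\bs{\sigma}}, \genxf_{j,i}^{\bs{\sigma}}]$ and their odd counterparts. There one cannot simply reduce to a single generator and invoke Corollary~\ref{classical_2_0} once; instead one must track how the iterated brackets on the $e$-side and the $f$-side interact, and the telescoping that yields the Cartan element $h_i - h_j = \sum_{m=i}^{j-1}(h_m - h_{m+1})$ requires care with signs coming from the super Jacobi identity when odd root vectors are involved. A clean way to handle this is to prove first the ``off-diagonal'' formulas in (1)--(5), which are more uniform, and then deduce the diagonal ones by writing $\genxe_{i,j}^{\bs{\sigma}} = -[\genxe_{i,j-1}^{\bs{\sigma}}, \genxe_{j-1,j}^{\bs{\sigma}}]$, applying the super Jacobi identity, and reducing $j-i$ by induction down to the base case $[e_m, f_m] = h_m - h_{m+1}$ from (QS4). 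An alternative, and perhaps the most economical route, is to observe that $\Uqn$ is generated by the $h$'s and the simple $e,f$'s, that $\genxe_{i,j}^{\bs{\sigma}}$ has weight $\bs{\ep}_i - \bs{\ep}_j$ by part (4), and that the images of the root vectors under the (injective) embedding $\qn \hookrightarrow \Uqn$ are forced by weight considerations to be scalar multiples of $E_{i,j}$ (resp.\ $E_{-i,j}$); a single normalization check in low rank, already carried out in the Example for $n = 3$, then pins down the scalars, after which the commutation formulas are immediate from the matrix computation in $\qn$.
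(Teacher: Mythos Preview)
Your induction-on-$j-i$ approach using the recursion ${\genxe}^{\bs{\sigma}}_{i,j+1}=-[{\genxe}^{\bs{\sigma}}_{i,j},e_j]$ from Corollary~\ref{classical_2_0} together with (QS1)--(QS6) is exactly what the paper does: it computes $[{\genxe}^{\bs{\sigma}}_{i,j},f_k]$ directly from the iterated bracket, deduces the off-diagonal cases, and then obtains $[{\genxe}^{\bs{\sigma}}_{i,j},{\genxe}^{\bs{\sigma}}_{j,i}]=h_i-h_j$ by expanding ${\genxe}^{\bs{\sigma}}_{j,i}$ via the same recursion and telescoping $[{\genxe}^{\bs{\sigma}}_{i,j-1},{\genxe}^{\bs{\sigma}}_{j-1,i}]+(h_{j-1}-h_j)$ down to the base case in (QS4). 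The remaining cases are dismissed as similar.

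Two remarks on the parts of your plan that differ from the paper. First, the $T_w$-reduction step is shakier than you suggest: Corollary~\ref{classical_1} only tells you how $T_w$ acts on a \emph{simple} root vector when $w(\alpha_i)$ is again simple, not how $T_w$ acts on a general ${\genxe}^{\bs{\sigma}}_{i,j}$, and the latter is essentially what Theorem~\ref{classical_2_1} addresses (with uncontrolled signs) rather than something you can take for granted here. In practice the paper bypasses this entirely and just computes. Second, your alternative via the embedding $\qn\hookrightarrow\Uqn$ is a genuinely different and cleaner route: since each ${\genxe}^{\bs{\sigma}}_{i,j}$ is an iterated Lie bracket of elements of $\qn$ it lies in $\qn$, has weight $\bs{\ep}_i-\bs{\ep}_j$ by an easy induction, hence is a scalar multiple of the corresponding elementary matrix; the normalization is fixed by ${\genxe}^{\bs{\sigma}}_{i,i+1}=e_i$ and the recursion, and then every formula in the proposition is just the standard supercommutator in $\qn$. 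The paper does not take this route, but it would shorten the argument considerably.
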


\begin{proof}
A direct calculation shows that 
\begin{align*}
[{{\genxe}}_{i,j}^{\bs{\sigma}}, f_i]
&=[[ 
     	   \cdots
     	   [ 
     	     [e_i, -e_{i+1}], -e_{i+2} 
     	              ], 
     	               \cdots ,-e_{j-1} ], f_i]\\
&=[ 
     	   \cdots
     	   [[ 
     	     [e_i, -e_{i+1}], f_i], -e_{i+2} 
     	              ], 
     	               \cdots ,-e_{j-1} ]\\
&=[ f_i, [ 
     	   \cdots
     	   [ 
     	     [e_i, -e_{i+1}], -e_{i+2} 
     	              ], 
     	               \cdots ,-e_{j-1} ]]
    -{{\genxe}}_{i+1,j}^{\bs{\sigma}}
\end{align*}
 Where the third equality is due to the relation $[e_i, f_i]=h_i-h_{i+1}$ in Proposition \ref{classdefine}. Similarly, the following holds: 
\begin{align*}
&[{{\genxe}}_{i,j}^{\bs{\sigma}}, f_k]=\left\{ \begin{aligned}
&{{\genxe}}_{i+1,j}^{\bs{\sigma}}  \quad &\mbox{if }  k=i,   \\
& -{{\genxe}}_{i,j-1}^{\bs{\sigma}}  \quad &\mbox{if }  k=j-1,    \\
&0  \quad & \mbox{otherwise },
\end{aligned}\right.
\end{align*}
and hence
 $[{{\genxe}}_{i,j}^{\bs{\sigma}}, {{\genxe}}_{l,i}^{\bs{\sigma}}]={{\genxe}}_{l,j}^{\bs{\sigma}}$ for $j\ne l$ and $[{{\genxe}}_{i,j}^{\bs{\sigma}}, {{\genxe}}_{j,k}^{\bs{\sigma}}]={{\genxe}}_{i,k}$ for $i\ne k$. 
Thus, by Corollary \ref{classical_2_0}, one can deduce that 
\begin{align*}
[{{\genxe}}_{i,j}^{\bs{\sigma}},{{\genxe}}_{j,i}^{\bs{\sigma}}]
&=[{{\genxe}}_{i,j}^{\bs{\sigma}}, -{{\genxe}}_{j,j-1}^{\bs{\sigma}}{{\genxe}}_{j-1,i}^{\bs{\sigma}}+{{\genxe}}^{\bs{\sigma}}_{j-1,i}{{\genxe}}^{\bs{\sigma}}_{j,j-1}]\\
&=[{{\genxe}}_{i,j-1}^{\bs{\sigma}},{{\genxe}}_{j-1,i}^{\bs{\sigma}}]+ h_{j-1}-h_j\\
&=[{{\genxe}}_{i,i+1}^{\bs{\sigma}},{{\genxe}}_{i+1,i}^{\bs{\sigma}}]+ h_{i+1}-h_j\\
&=h_i - h_j.
\end{align*}

Hence, the second formula of part (1) holds. Similarly, the other cases can be verified and we skip the detail.
\end{proof}

It is seen that the root vectors  depend  on the choice of a reduced expression of $w_0$. 
We  then discuss the root vectors of $\Uqn$ for arbitrary reduced expression of $w_0$.
Assume  {$\bs{\lambda} \in \NN^N$} and $w_0 = w^{\bs{\lambda}}$ is a reduced expression of {$w_0$} different  from \eqref{w_0_fix}.
It is known that $w^{\bs{\lambda}}$ could be obtained by applying the following relations 
\begin{equation}\label{sys}
\begin{aligned}
       & s_i s_{i+1} s_i = s_{i+1} s_i s_{i+1}, \\
       & s_i s_j =s_j s_i \quad \mbox{ for} \enspace |i-j|>1.         
\end{aligned}
\end{equation}  
on $w^{\bs{\sigma}}$ in finitely many times.
 %And the root vectors corresponding to the reduced expression $w^{\bs{\sigma}}$  
%in both classical and quantum cases have been respectively provided by Lemma \ref{lemroot} and Proposition \ref{prop_rv}. 
%Therefore, for any two reduced expressions $w^{\bs{\varsigma}}$ and $w^{\bs{\tau}}$ of $w_0$, 
%we only need to inductively consider the following two cases regarding the relationships between the root vectors corresponding to $w^{\bs{\varsigma}}$ and $w^{\bs{\tau}}$.
Assume $w^{\bs{\varsigma}}$ and $w^{\bs{\tau}}$ are two reduced expressions of $w_0$ and 
  $w^{\bs{\tau}}$  could be obtained by applying one of the equations in \eqref{sys}, 
we will  find the relations between $\psi^+_{\bs{\varsigma}} $ and $\psi^+_{\bs{\tau}}$.

\begin{enumerate}
\item[Case 1:]
Assume that $w^{\bs{\varsigma}} =w{'}s_{{i}} s_{{j}}  w{''}$, $w^{\bs{\tau}} =w{'}s_{{j}} s_{{i}}  w{''}$, 
   with $l(w{'}) = h$ and {$ | {i}  -  {j} | > 1 $.} \\

By the action in \eqref{action_dual}, we see that {$s_{{j}}(\alpha_{{i}}) = \alpha_{{i}} $} and
{$s_{{i}}(\alpha_{{j}})=\alpha_{{j}}$}. 
Therefore, the relationships between the roots in the classical case corresponding to the reduced expressions $w^{\bs{\varsigma}}$ and $w^{\bs{\tau}}$ are as follows: 
\begin{equation}\label{classical1_0}
\begin{aligned}
&\beta_{t}^{\bs{\varsigma}}=\beta_{t}^{\bs{\tau}} , \enspace  \mbox{if }\enspace t \ne h+1, h+2 ; \\
&\beta_{h+1}^{\bs{\varsigma}}
=w{'}(\alpha_{{i}})
=w{'} s_{{j}}(\alpha_{{i}})
=\beta_{h+2}^{\bs{\tau}},  \\
&\beta_{h+2}^{\bs{\varsigma}}
=w{'}s_{{i}}(\alpha_{{j}})
=w{'}(\alpha_{{j}})
=\beta_{h+1}^{\bs{\tau}}.
\end{aligned}
\end{equation}
On the other hand, \eqref{br_relation1} implies
\begin{equation}\label{classical1_1}
\begin{aligned}
   &{\gene}_{{t}}^{\bs{\varsigma}}
   ={\gene}_{{t}}^{\bs{\tau}}, \quad 
   \ol{\gene}_{{t}}^{{\bs{\varsigma}}}
   =\ol{\gene}_{{t}}^{\bs{\tau}}, \quad
   {\genf}_{{t}}^{\bs{\varsigma}}
   ={\genf}_{{t}}^{\bs{\tau}}, \quad 
   \ol{\genf}_{{t}}^{\bs{\varsigma}}
   =\ol{\genf}_{{t}}^{\bs{\tau}}, \enspace 
   \mbox{ for all  }\enspace t\ne h+1,  h+2 ;
\end{aligned}
\end{equation}
while $T_i(x_j) = x_j$  for $x_j \in \{ e_j, e_{\ol{j}},  f_j, f_{\ol{j}} \}$ and {$|i-j| >1$} implies
\begin{equation}\label{classical1_2}
\begin{aligned}
   &{\gene}_{{h+1}}^{\bs{\varsigma}}
   =T_{w{'}}(e_{i})
   =T_{w{'}} T_{j} (e_{i})
   ={\gene}_{{h+2}}^{\bs{\tau}}, \quad
   {\gene}_{{h+2}}^{\bs{\varsigma}}
   =T_{w{'}}T_{{i}}(e_{j})
   =T_{w{'}}(e_{j})
   ={\gene}_{{h+1}}^{\bs{\tau}},\\
   &\ol{\gene}_{{h+1} }^{\bs{\varsigma}}
   =T_{w{'}}(e_{\ol{i}})
   =T_{w{'}} T_{j} (e_{\ol{i}})
   =\ol{\gene}_{{h+2}}^{\bs{\tau}}\quad
   \ol{\gene}_{{h+2} }^{\bs{\varsigma}}
   =T_{w{'}}T_{{i}}(e_{\ol{j}})
   =T_{w{'}}(e_{\ol{j}})
   =\ol{\gene}_{{h+1}}^{\bs{\tau}},\\   
   &{\genf}_{{h+1} }^{\bs{\varsigma}}
   =T_{w{'}}(f_{i})
   =T_{w{'}} T_{j} (f_{i})
   ={\genf}_{{h+2}}^{\bs{\tau}}, \quad
   {\genf}_{{h+2} }^{\bs{\varsigma}}
   =T_{w{'}}T_{{i}}(f_{j})
   =T_{w{'}}(f_{j})
   ={\genf}_{{h+1}}^{\bs{\tau}}, \\
   &\ol{\genf}_{{h+1} }^{\bs{\varsigma}}
   =T_{w{'}}(f_{\ol{i}})
   =T_{w{'}} T_{j} (f_{\ol{i}})
   =\ol{\genf}_{{h+2}}^{\bs{\tau}}, \quad
   \ol{\genf}_{{h+2} }^{\bs{\varsigma}}
   =T_{w{'}}T_{{i}}(f_{\ol{j}})
   =T_{w{'}}(f_{\ol{j}})
   =\ol{\genf}_{{h+1}}^{\bs{\tau}}.  
\end{aligned}
\end{equation}
Summarizing \eqref{classical1_0},  \eqref{classical1_1},  \eqref{classical1_2},
it follows
\begin{equation}\label{classical1_r}
\begin{aligned}
 {\genxe}^{\bs{\varsigma}}_{i,j} =  {\genxe}^{\bs{\tau}}_{i,j}, \quad
  {\ol{\genxe}}^{\bs{\varsigma}}_{i,j} =    {\ol{\genxe}}^{\bs{\tau}}_{i,j},
\end{aligned}
\end{equation}
for all {$1 \le i \ne j \le n$}, 
hence $\psi^-_{\bs{\varsigma}} = \psi^-_{\bs{\tau}} $ and  $\psi^+_{\bs{\varsigma}} = \psi^+_{\bs{\tau}} $.
     % Case 2 : assume that $w^{\bs{\varsigma}} =w{'}s_{{i_{k}}} s_{{i_{k-1}}} s_{{i_{k}}} w{''}$ and $w^{\bs{\tau}} =w{'} s_{{i}} s_{{j}} s_{{i}}w{''}$, {$ | {i}  -  {j} | = 1 $}.\\

\item[Case 2:]
Assume that $w^{\bs{\varsigma}} =w{'}s_{{i}} s_{{j}} s_{{i}} w{''}$, $w^{\bs{\tau}} =w{'} s_{{j}} s_{{i}} s_{{j}}w{''}$, 
  with $l(w{'}) = h$ and {$ | {i}  -  {j} | = 1 $}.
By the action in \eqref{action_dual}, we see that
   {$ s_{i} (\alpha_{j})= s_{j} (\alpha_{i}) $}, 
{$  s_{i}s_{j}(\alpha_{i}) =\alpha_{j}$}, 
and 
{$ s_{j} s_{i}(\alpha_{j}) =\alpha_{i}$}. 
Hence, the relationships between the roots in the classical case corresponding to the reduced expressions $w^{\bs{\varsigma}}$ and $w^{\bs{\tau}}$ are as follows:
\begin{equation}\label{classical2_0}
\begin{aligned}
&\beta_{t}^{\bs{\varsigma}}=\beta_{t}^{\bs{\tau}} , \enspace  \mbox{for all }\enspace t \ne h+1,  h+2,  h+3, \\
&\beta_{h+1}^{\bs{\varsigma}}
=w' (\alpha_{i})
=w' s_{j} s_{i}(\alpha_{j})
=\beta_{h+3}^{\bs{\tau}}, \\
&\beta_{h+2}^{\bs{\varsigma}}
=w' s_{i} (\alpha_{j})
=w'  s_{j} (\alpha_{i})
=\beta_{h+2}^{\bs{\tau}}, \\
&\beta_{h+3}^{\bs{\varsigma}}
=w' s_{i}s_{j}(\alpha_{i})
=w' (\alpha_{j})
=\beta_{h+1}^{\bs{\tau}}.
\end{aligned}
\end{equation}
On the other hand, 
 \eqref{br_relation1} implies
\begin{equation}\label{classical2_1}
\begin{aligned}
   &{\gene}_{{t} }^{\bs{\varsigma}}
   ={\gene}_{{t} }^{\bs{\tau}}, \quad 
   \ol{\gene}_{{t} }^{\bs{\varsigma}}
   =\ol{\gene}_{{t} }^{\bs{\tau}}, \quad
   {\genf}_{{t} }^{\bs{\varsigma}}
   ={\genf}_{{t}}^{\bs{\tau}}, \quad 
   \ol{\genf}_{{t} }^{\bs{\varsigma}}
   =\ol{\genf}_{{t}}^{\bs{\tau}}, \enspace 
   \mbox{if}\enspace t\ne  h+1,  h+2,  h+3,
\end{aligned}
\end{equation}
 and    Remark \ref{classicalnote2} implies
\begin{equation}\label{classical2_2}
\begin{aligned}
  &{\gene}_{{h+1} }^{\bs{\varsigma}}
  =T_{w{'}}({e}_{i})
  =T_{w{'}}T_{j}T_{i}({e}_{j})
  ={\gene}_{{h+3} }^{\bs{\tau}},\\
&\ol{\gene}_{{h+1} }^{\bs{\varsigma}}
  =T_{w{'}}(e_{\ol{i}})
  =T_{w{'}}T_{j}T_{i}(e_{\ol{j}})
  =\ol{\gene}_{{h+3} }^{\bs{\tau}},\\
  &{\genf}_{{h+1} }^{\bs{\varsigma}}
  =T_{w{'}}({f}_{i})
  =T_{w{'}}T_{j}T_{i}({f}_{j})
  ={\genf}_{{h+3} }^{\bs{\tau}},\\
    &\ol{\genf}_{{h+1} }^{\bs{\varsigma}}
  =T_{w{'}}({f}_{i})
  =T_{w{'}}T_{j}T_{i}({f}_{\ol{j}})
  = \ol{\genf}_{{h+3} }^{\bs{\tau}},\\
  &{\gene}_{{h+3} }^{\bs{\varsigma}}
  =T_{w{'}}T_{i}T_{j}({e}_{i})
  =T_{w{'}}({e}_{j})
  ={\gene}_{{h+1} }^{\bs{\tau}},\\
&   {\ol{\gene}}_{{h+3}}^{\bs{\varsigma}}
   =T_{w{'}}T_{i}T_{j}({e}_{\ol{i}})
   =T_{w{'}}({e}_{\ol{j}})
   ={\ol{\gene}}_{{h+1} }^{\bs{\tau}},\\
   &{\genf}_{{h+3} }^{\bs{\varsigma}}
  =T_{w{'}}T_{i}T_{j}({f}_{i})
  =T_{w{'}}({f}_{j})
  ={\genf}_{{h+1} }^{\bs{\tau}},\\
&   {\ol{\genf}}_{{h+3}}^{\bs{\varsigma}}
   =T_{w{'}}T_{i}T_{j}({f}_{\ol{i}})
   =T_{w{'}}({f}_{\ol{j}})
   ={\ol{\genf}}_{{h+1} }^{\bs{\tau}}.
\end{aligned}
\end{equation}
In the meanwhile, as 
$T_i(x_j) = -T_j(x_i)  $  for $x_j \in \{ e_j, e_{\ol{j}},  f_j, f_{\ol{j}} \}$ and {$|i-j| = 1$}, we have
\begin{equation}\label{classical2_3}
\begin{aligned}
 &{\gene}_{{h+2} }^{\bs{\varsigma}}
 =T_{w{'}}T_{i}({e}_{j})
  =-T_{w{'}}T_{j}({e}_{i})
  =-{\gene}_{{h+2} }^{\bs{\tau}}, \\
 %=T_{w{'}}(-{e}_{i}{e}_{j}+ {e}_{j} {e}_{i})
 %=-{\gene}_{{h+3} }^{\bs{\tau}} {\gene}_{{h+1} }^{\bs{\tau}}+ {\gene}_{{h+1} }^{\bs{\tau}}{\gene}_{{h+3} }^{\bs{\tau}},\\
&{\ol{\gene}}_{{h+2} }^{\bs{\varsigma}}
   =T_{w{'}}T_{i}({e}_{\ol{j}})
  =-T_{w{'}}T_{j}({e}_{\ol{i}})
  =-{\ol{\gene}}_{{h+2} }^{\bs{\tau}},\\
  % =T_{w{'}}(-{e}_{i}{e}_{\ol{j}}+ {e}_{\ol{j}}{e}_{i})
%   =-{\gene}_{{h+3} }^{\bs{\tau}}{\ol{\gene}}_{{h+1}}^{\bs{\tau}}+  {\ol{\gene}}_{{h+1}}^{\bs{\tau}}{\gene}_{{h+3} }^{\bs{\tau}}, \\
 &{\genf}_{{h+2} }^{\bs{\varsigma}}
 =T_{w{'}}T_{i}({f}_{j})
 =-T_{w{'}}T_{j}({f}_{i})  
= -{\genf}_{{h+2} }^{\bs{\tau}}, \\
% =T_{w{'}}(-{f}_{j}{f}_{i} +v{f}_{i}{f}_{j} )
% =-{\genf}_{{h+1} }^{\bs{\tau}}{\genf}_{{h+3} }^{\bs{\tau}} + {\genf}_{{h+3} }^{\bs{\tau}}{\genf}_{{h+1} }^{\bs{\tau}},\\
&{\ol{\genf}}_{{h+2} }^{\bs{\varsigma}}
   =T_{w{'}}T_{i}({f}_{\ol{j}})
   =-T_{w{'}}T_{j}({f}_{\ol{i}})
 =  -{\ol{\genf}}_{{h+2} }^{\bs{\tau}} .
%   =T_{w{'}}(-{f}_{\ol{j}}{f}_{i} +v{f}_{i}{f}_{\ol{j}})
%   =-{\ol{\genf}}_{{h+1}}^{\bs{\tau}}{\genf}_{{h+3} }^{\bs{\tau}}+v {\genf}_{{h+3} }^{\bs{\tau}}{\ol{\genf}}_{{h+1}}^{\bs{\tau}}; \\
\end{aligned}
\end{equation}
\end{enumerate}
Summarizing equations \eqref{classical2_0},  \eqref{classical2_1},  \eqref{classical2_2},  \eqref{classical2_3}, 
it follows
\begin{equation}\label{classical2_r}
\begin{aligned}
& {\genxe}^{\bs{\varsigma}}_{i,j} =  -{\genxe}^{\bs{\tau}}_{i,j}, \ 
  {\ol{\genxe}}^{\bs{\varsigma}}_{i,j} =    -{\ol{\genxe}}^{\bs{\tau}}_{i,j}, \ 
 {\genxe}^{\bs{\varsigma}}_{j,i} =  -{\genxe}^{\bs{\tau}}_{j,i}, \ 
  {\ol{\genxe}}^{\bs{\varsigma}}_{j,i} =    -{\ol{\genxe}}^{\bs{\tau}}_{j,i},
  \mbox{ if } {\alpha}_{i,j} = \beta_{h+2}^{\bs{\varsigma}} = \beta_{h+2}^{\bs{\tau}};
   \\
& {\genxe}^{\bs{\varsigma}}_{i,j} =  {\genxe}^{\bs{\tau}}_{i,j}, \ 
  {\ol{\genxe}}^{\bs{\varsigma}}_{i,j} =   {\ol{\genxe}}^{\bs{\tau}}_{i,j}, \ 
 {\genxe}^{\bs{\varsigma}}_{j,i} =  {\genxe}^{\bs{\tau}}_{j,i}, \ 
  {\ol{\genxe}}^{\bs{\varsigma}}_{j,i} =   {\ol{\genxe}}^{\bs{\tau}}_{j,i},
  \mbox{ otherwise} .
\end{aligned}
\end{equation}
Hence we have proved the following theorem.
\begin{thm}\label{classical_2_1}
Assume {$w^{\bs{\gamma}}$} is a  reduced expression of $w_0$, 
$1\leq i<j\leq n$, 
then 
we have 
\begin{align*}
&{\genxe}^{\bs{\gamma}}_{i,j} =r^{\bs{\gamma}}_{i,j}{\genxe}^{\bs{\sigma}}_{i,j} ,\qquad
{\ol{\genxe}}^{\bs{\gamma}}_{i,j} =r^{\bs{\gamma}}_{i,j}{\genxe}^{\bs{\sigma}}_{i,j} ,\\
&{\genxf}^{\bs{\gamma}}_{j,i} =r^{\bs{\gamma}}_{j,i}{\genxf}^{\bs{\sigma}}_{j,i} ,\qquad
{\ol{\genxf}}^{\bs{\gamma}}_{j,i}  =r^{\bs{\gamma}}_{j,i}{\genxf}^{\bs{\sigma}}_{j,i} ,
\end{align*}  
where {$w^{\bs{\sigma}}$} is  the fixed reduced expression  in \eqref{w_0_fix}, 
and  $r^{\bs{\gamma}}_{i,j},r^{\bs{\gamma}}_{j,i} \in \{\pm 1\}$.
\end{thm}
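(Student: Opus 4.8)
The plan is to deduce the statement from the two single-move computations carried out in Case~1 and Case~2 above, propagating them along a chain of reduced words. By Matsumoto's (Tits') theorem --- this is exactly the fact recalled around \eqref{sys} --- since $w^{\bs{\sigma}}$ and $w^{\bs{\gamma}}$ are reduced expressions of the same element $w_0$, there is a finite sequence of reduced expressions
\[
w^{\bs{\sigma}}=w^{\bs{\gamma}_0},\quad w^{\bs{\gamma}_1},\quad \dots,\quad w^{\bs{\gamma}_m}=w^{\bs{\gamma}},
\]
in which each $w^{\bs{\gamma}_{p+1}}$ is obtained from $w^{\bs{\gamma}_p}$ by a single application of one of the relations in \eqref{sys}. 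I would then induct on $m$, the case $m=0$ being trivial with every sign equal to $1$.

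For the inductive step, assume the theorem holds with $w^{\bs{\gamma}_{m-1}}$ in place of $w^{\bs{\gamma}}$, with signs $r^{\bs{\gamma}_{m-1}}_{i,j},r^{\bs{\gamma}_{m-1}}_{j,i}\in\{\pm1\}$. Write the last move as $w^{\bs{\gamma}_{m-1}}=w'uw''$ and $w^{\bs{\gamma}}=w'u'w''$, where either $(u,u')=(s_a s_b,\,s_b s_a)$ with $|a-b|>1$ (this is Case~1) or $(u,u')=(s_a s_b s_a,\,s_b s_a s_b)$ with $|a-b|=1$ (this is Case~2); since both sides are reduced words of length $N$, the prefix $w'$ is a reduced element of length $h:=l(w')$, and \eqref{br_relation2} lets the partial braid-group products defining the root vectors factor through $T_{w'}$ precisely as in the analyses of Case~1 and Case~2. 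In Case~1, \eqref{classical1_r} together with the corresponding identities for the $f$-vectors gives ${\genxe}^{\bs{\gamma}}_{i,j}={\genxe}^{\bs{\gamma}_{m-1}}_{i,j}$, ${\genxf}^{\bs{\gamma}}_{j,i}={\genxf}^{\bs{\gamma}_{m-1}}_{j,i}$ and similarly for the odd vectors, for all $1\le i\ne j\le n$; so all signs relative to $w^{\bs{\sigma}}$ are unchanged. In Case~2, \eqref{classical2_r} shows that the four root vectors attached to the single root $\beta^{\bs{\gamma}_{m-1}}_{h+2}$ permuted by the move each acquire a factor $-1$ while all other root vectors are unchanged; composing with the inductive hypothesis, each $r^{\bs{\gamma}}_{i,j}$ equals $r^{\bs{\gamma}_{m-1}}_{i,j}$ or $-r^{\bs{\gamma}_{m-1}}_{i,j}$, hence remains in $\{\pm1\}$. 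This closes the induction and proves the theorem.

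I would also record, from the explicit identities \eqref{classical1_2}, \eqref{classical2_2} and \eqref{classical2_3}, that under each move the even vector ${\genxe}_{i,j}$ and the odd vector $\ol{\genxe}_{i,j}$ transform by one and the same scalar (and likewise the two negative vectors), which is what licenses the single sign $r^{\bs{\gamma}}_{i,j}$ (resp.\ $r^{\bs{\gamma}}_{j,i}$) appearing in the statement. The step I expect to cost the most care is organisational rather than computational: one must make sure that an elementary braid move applied deep inside a long reduced word really is an instance of Case~1 or Case~2 --- that the untouched letters on the left genuinely assemble into a reduced element $w'$ of the correct length, and that the move affects exactly the isolated block of two or three consecutive letters --- which is the content already packaged into \eqref{br_relation1}, \eqref{br_relation2} and into the reduction of $w^{\bs{\gamma}}$ to $w^{\bs{\sigma}}$ via finitely many applications of \eqref{sys}. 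Granting that, what remains is the routine bookkeeping of how each move permutes the associated roots, together with the trivial observation that $\{\pm1\}$ is closed under negation.
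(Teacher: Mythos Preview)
Your proposal is correct and is essentially the same argument as the paper's: the paper carries out the single-move analyses in Case~1 and Case~2, summarises them in \eqref{classical1_r} and \eqref{classical2_r}, and then declares the theorem proved on the strength of the finite chain of elementary moves provided by \eqref{sys}. You have simply made the implicit induction along that chain explicit and noted that the even and odd vectors transform by the same sign at each step, which is exactly what the paper's computations \eqref{classical1_2}, \eqref{classical2_2}, \eqref{classical2_3} show.
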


\section{The  PBW basis of the Kostant {$\ZZ$}-form $\UqnZ$}\label{PBW_0}

Referring to  \cite[(3.1)]{DW}, 
recall the elements {${\vx}_{i, j} $} and {$\ol{\vx}_{i, j} $} in {$\Uqn$} for  {$1 \le i \ne j  \le n$},
and it is trivial that 
\begin{align*}
& {\vx}_{i, j} = -{\genxe}^{\bs{\sigma}}_{i,j} , \qquad
\ol{\vx}_{i, j} =  -{\ol{\genxe}}^{\bs{\sigma}}_{i,j} . 
\end{align*}

For any {$X $}, {$m \in \NN$},
and {$\bs{k} \in \NN^n$},  set
\begin{align*}
X^{(m)} = \frac{X^m}{m!}, \qquad
{\binom{X}{m}} = \frac{X(X-1) \cdots (X-m+1)}{m!}, \qquad
%{\binom{X}{0}} = 1, \quad
{\binom{\bs{h}} {\bs{k}}} = \prod_{i=1}^{n} \binom{h_{i}}{k_i}.
\end{align*}

%Denote  {$U = \Uqn$}  for short.
By \cite[Section 4]{BK2},  the Kostant {$\ZZ$}-form of $\Uqn$, denoted by $\UqnZ$, is the {$\ZZ$}-subalgebra generated by
\begin{align*}
\fsG_{\ZZ} = \left\{  \binom{\bs{h}}{\bs{k}},   h_{\ol{i}},  e^{(t)}_{j},   e_{\ol{j}},  f^{(t)}_{j},   f_{\ol{j}}
\where i\in[1,n],j\in[1,n-1], t \in \NN,  \bs{k} \in \NN^n  \right\}.
\end{align*}
 Let {$\Uz^{0}$} be the $\ZZ$-subalgebra of {$\UqnZ$} generated by {$ \binom{\bs{h}}{\bs{k}}$} and {$h_{\ol{i}} $}for $1 \le i \le n $, 
   and let {$\Uz^{+}$} (resp. {$\Uz^{-}$})be the $\ZZ$-subalgebra of {$\UqnZ$} generated by $e_j$ and $ e_{\ol{j}} $ (resp. $f_j, f_{\ol{j}} $) for $1 \le j \le n-1 $. 
   Referring to \cite[Proposition 3.3]{DW},  there is a linear space isomorphism
   $$
   \UqnZ \cong \Uz^{-} \otimes \Uz^{0} \otimes \Uz^{+}.
   $$

With the same disscussion with  \cite[Proposition 3.3]{DW}
and \cite[Proposition 6.14]{GLL},
and applying Theorem \ref{classical_2_1}, 
 we have 
\begin{thm}\label{thm_classical_pbw}
Assume {$w^{\bs{\gamma}}$} is any  reduced expression of $w_0$, 
then 
we have 
\begin{enumerate}
\item
   	 the set
$
   \{	 {\genxe}^{\bs{\gamma}}_{A^+} =\prod_{n\ge i\ge 1} \prod_{n\ge j\ge i+1} ({{\genxe}}^{\bs{\gamma}}_{i, j})^{(\SE{a}_{i, j})} (\ol{{\genxe}}^{\bs{\gamma}}_{i, j})^{\SO{a}_{i, j}}
      \where  A \in \MNZN(n)\} 
$
   	forms a $\ZZ$-basis of {$\Uz^{+}$};
\item
     the set
$
 \{   {\genxf}^{\bs{\gamma}}_{A^-} 
   =\prod_{n\ge j\ge 2} \prod_{1\le i\le j-1}({{\genxf}}^{\bs{\gamma}}_{j, i})^{(\SE{a}_{j, i})} (\ol{{\genxf}}^{\bs{\gamma}}_{j, i})^{\SO{a}_{j, i}}
     \where  A \in \MNZN(n)\} 
$
   % where ${a_{j,i}^{\ol{0}}} \in \mathbb{N}, {a_{j,i}^{\ol{1}}} \in \mathbb{Z}_2 $,
   	forms a $\ZZ$-basis of {$\Uz^{-}$};
\item
        the set
   	 $ \{ {\genxh}_{A, \bs{j}} =\prod\limits_{ i=1}^{n}  \binom{h_{i}}{j_i} h_{\ol {i}}^{a_{i,i}^{\ol{1}}} \where \bs{j} =(j_1 , \dots , j_n )\in \NN^{n} , A \in \MNZN(n) \}$
   	 forms a $\ZZ$-basis of {$\Uz^{0}$};
     \item
   	the set
   	$ \{  \bs{\frb}^{\bs{\gamma}}_{A} = {{f}}^{\bs{\gamma}}_{A} \cdot  {\genxh}_{A, \bs{j}}  \cdot {{e}}^{\bs{\gamma}}_{A}  
   	\where  \bs{j} =(j_1 , \dots , j_n )\in \NN^{n} , A \in \MNZN(n) \}$
   	   	forms a $\ZZ$-basis of {$\UqnZ$}.   
\end{enumerate}
\end{thm}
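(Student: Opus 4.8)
The plan is to establish (3) directly, to establish (1) and (2) first for the fixed reduced expression $\bs{\sigma}$ of \eqref{w_0_fix} and then transfer them to an arbitrary $\bs{\gamma}$ via Theorem~\ref{classical_2_1}, and finally to derive (4) by gluing (1)--(3) along the triangular decomposition $\UqnZ \cong \Uz^{-} \otimes \Uz^{0} \otimes \Uz^{+}$ of \cite[Proposition~3.3]{DW}. For $\bs{\gamma} = \bs{\sigma}$, assertions (1) and (2) are --- up to a sign on each monomial --- the known PBW-type descriptions of $\Uz^{+}$ and $\Uz^{-}$ built from the elements $\vx_{i,j},\ol{\vx}_{i,j}$ following \cite[(3.1), Proposition~3.3]{DW}, and statement (3) does not involve $\bs{\gamma}$ at all.

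For (3), recall from Proposition~\ref{classdefine} that the $h_i$ commute among themselves and with every $h_{\ol j}$, so $\{\binom{\bs h}{\bs k}\mid \bs k\in\NN^{n}\}$ is a $\ZZ$-basis of the commutative $\ZZ$-subalgebra they generate (the classical divided-power fact, together with the closure of $\ZZ[\binom{h_i}{*}]$ under multiplication, as in \cite[Proposition~6.14]{GLL}); on the other hand $[h_{\ol i},h_{\ol j}]=0$ for $i\ne j$ makes distinct $h_{\ol i}$ anticommute, while $[h_{\ol i},h_{\ol i}]=2h_i$ unwinds to $h_{\ol i}^{2}=h_i$, so each $h_{\ol i}$ occurs with exponent $0$ or $1$. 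Hence the ordered monomials $\genxh_{A,\bs j}=\prod_{i=1}^{n}\binom{h_i}{j_i}h_{\ol i}^{a_{i,i}^{\ol 1}}$ $\ZZ$-span $\Uz^{0}$, and their $\ZZ$-linear independence follows from the Poincar\'e--Birkhoff--Witt theorem for $\bsU(\frh)$, where $\{h_1,\dots,h_n,h_{\ol 1},\dots,h_{\ol n}\}$ is a basis of the Cartan subalgebra $\frh$ of $\qn$. This proves (3).

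For (1) --- (2) being entirely parallel for the negative part --- I first take $\bs{\gamma}=\bs{\sigma}$: under $\vx_{i,j}=-\genxe^{\bs{\sigma}}_{i,j}$ and $\ol{\vx}_{i,j}=-\ol{\genxe}^{\bs{\sigma}}_{i,j}$, and using $(cX)^{(m)}=c^{m}X^{(m)}$ together with the fact that the odd exponents $\SO{a}_{i,j}$ lie in $\{0,1\}$ by the definition of $\MNZN(n)$, each monomial $\genxe^{\bs{\sigma}}_{A^{+}}$ equals $\pm$ the corresponding ordered monomial in the $\vx_{i,j},\ol{\vx}_{i,j}$; since the latter form a $\ZZ$-basis of $\Uz^{+}$ following \cite[Proposition~3.3]{DW} and multiplication by a unit of $\ZZ$ sends a basis to a basis, $\{\genxe^{\bs{\sigma}}_{A^{+}}\}$ is a $\ZZ$-basis of $\Uz^{+}$. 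For general $\bs{\gamma}$, Theorem~\ref{classical_2_1} gives $\genxe^{\bs{\gamma}}_{i,j}=r^{\bs{\gamma}}_{i,j}\,\genxe^{\bs{\sigma}}_{i,j}$ and $\ol{\genxe}^{\bs{\gamma}}_{i,j}=r^{\bs{\gamma}}_{i,j}\,\ol{\genxe}^{\bs{\sigma}}_{i,j}$ with $r^{\bs{\gamma}}_{i,j}\in\{\pm1\}$; since these relate root vectors carrying the same index $(i,j)$ the fixed order of the factors is unaffected, and applying $(cX)^{(m)}=c^{m}X^{(m)}$ factor by factor and collecting signs yields
\begin{equation*}
\genxe^{\bs{\gamma}}_{A^{+}} = \Big(\prod_{1\le i<j\le n}(r^{\bs{\gamma}}_{i,j})^{\SE{a}_{i,j}+\SO{a}_{i,j}}\Big)\,\genxe^{\bs{\sigma}}_{A^{+}} = \pm\,\genxe^{\bs{\sigma}}_{A^{+}},
\end{equation*}
the sign depending only on $A$ and lying in $\{\pm1\}\subset\ZZ^{\times}$. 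Hence $\{\genxe^{\bs{\gamma}}_{A^{+}}\}$ and $\{\genxe^{\bs{\sigma}}_{A^{+}}\}$ span the same $\ZZ$-submodule, so the former is also a $\ZZ$-basis of $\Uz^{+}$; this proves (1), and (2) follows by the same computation applied to the $\genxf^{\bs{\gamma}}_{j,i},\ol{\genxf}^{\bs{\gamma}}_{j,i}$.

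Finally, (4) is immediate: composing the isomorphism $\UqnZ\cong\Uz^{-}\otimes\Uz^{0}\otimes\Uz^{+}$ of \cite[Proposition~3.3]{DW} with multiplication sends the tensor products of the three $\ZZ$-bases from (2), (3), (1) onto the set $\{\bs{\frb}^{\bs{\gamma}}_{A}={f}^{\bs{\gamma}}_{A}\cdot\genxh_{A,\bs j}\cdot{e}^{\bs{\gamma}}_{A}\}$, which is therefore a $\ZZ$-basis of $\UqnZ$. I expect the only step requiring genuine care to be the sign bookkeeping in the previous paragraph --- checking that the scalars $r^{\bs{\gamma}}_{i,j}$ of Theorem~\ref{classical_2_1} assemble into a single unit of $\ZZ$ on each PBW monomial rather than altering its shape, which is precisely what the divided-power identity $(cX)^{(m)}=c^{m}X^{(m)}$ and the bound $\SO{a}_{i,j}\le 1$ guarantee; the rest carries over verbatim from \cite{DW} and \cite{GLL}.
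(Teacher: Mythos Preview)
Your proposal is correct and follows essentially the same route as the paper, which simply records that the result is obtained ``with the same discussion'' as \cite[Proposition~3.3]{DW} and \cite[Proposition~6.14]{GLL} together with Theorem~\ref{classical_2_1}. You have unpacked exactly these ingredients: the known PBW description for the fixed $\bs{\sigma}$ (via the identification $\vx_{i,j}=-\genxe^{\bs{\sigma}}_{i,j}$), the sign transfer from $\bs{\sigma}$ to arbitrary $\bs{\gamma}$ using Theorem~\ref{classical_2_1}, and the triangular decomposition to assemble~(4).
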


\section{The action of $\fcB$ on the quantum queer superalgebra $\Uvqn$}\label{braid_uvqn}
In \cite[Section 4]{Ol}, Olshanski introduced a quantum deformation of the  universal enveloping algebra of  $\qn$ 
using a modification of the Reshetikhin-Takhtajan-Faddeev method, denoted as {$\Uvqn$}. 
Du and Wan gives  an equivalent definition of {$\Uvqn$} in  \cite[Proposition 5.2]{DW}.
\begin{defn}\cite[Proposition 5.2]{DW}\label{defqn}
The quantum queer superalgebra $\Uvqn$ is the (Hopf) superalgebra
over $\Qv$  generated by
even generators  {${\genK}_{i}$}, {${\genK}_{i}^{-1}$},  {${\genE}_{j}$},  {${\genF}_{j}$},
and odd generators  {${\genK}_{\ol{i}}$},  {${\genE}_{\ol{j}}$}, {${\genF}_{\ol{j}}$},
with  {$ 1 \le i \le n$}, {$ 1 \le j \le n-1$}, subjecting to the following relations:
\begin{align*}
%\begin{itemize}
({\rm QQ1})\quad
&	{\genK}_{i} {\genK}_{i}^{-1} = {\genK}_{i}^{-1} {\genK}_{i} = 1,  \qquad
	{\genK}_{i} {\genK}_{j} = {\genK}_{j} {\genK}_{i} , \qquad
	{\genK}_{i} {\genK}_{\ol{j}} = {\genK}_{\ol{j}} {\genK}_{i}, \\
&	{\genK}_{\ol{i}} {\genK}_{\ol{j}} + {\genK}_{\ol{j}} {\genK}_{\ol{i}}
	= 2 {\delta}_{i,j} \frac{{\genK}_{i}^2 - {\genK}_{i}^{-2}}{{v}^2 - {v}^{-2}}; \\
({\rm QQ2})\quad
& 	{\genK}_{i} {\genE}_{j} = {v}^{(\bs{\ep}_i, \alpha_j)} {\genE}_{j} {\genK}_{i}, \qquad
	{\genK}_{i} {\genE}_{\ol{j}} = {v}^{(\bs{\ep}_i, \alpha_j)} {\genE}_{\ol{j}} {\genK}_{i}, \\
& 	{\genK}_{i} {\genF}_{j} = {v}^{-(\bs{\ep}_i, \alpha_j)} {\genF}_{j} {\genK}_{i}, \qquad
	{\genK}_{i} {\genF}_{\ol{j}} = {v}^{-(\bs{\ep}_i, \alpha_j)} {\genF}_{\ol{j}} {\genK}_{i}; \\
({\rm QQ3})\quad
& {\genK}_{\ol{i}} {\genE}_{i} - {v} {\genE}_{i} {\genK}_{\ol{i}} = {\genE}_{\ol{i}} {\genK}_{i}^{-1}, \qquad
	{v} {\genK}_{\ol{i}} {\genE}_{i-1} -  {\genE}_{i-1} {\genK}_{\ol{i}} = - {\genK}_{i}^{-1} {\genE}_{\ol{i-1}}, \\
& {\genK}_{\ol{i}} {\genF}_{i} - {v} {\genF}_{i} {\genK}_{\ol{i}} = - {\genF}_{\ol{i}} {\genK}_{i}, \qquad
	{v} {\genK}_{\ol{i}} {\genF}_{i-1} -  {\genF}_{i-1} {\genK}_{\ol{i}} = {\genK}_{i} {\genF}_{\ol{i-1}},\\
& {\genK}_{\ol{i}} {\genE}_{\ol{i}} + {v} {\genE}_{\ol{i}} {\genK}_{\ol{i}} = {\genE}_{i} {\genK}_{i}^{-1}, \qquad
	{v} {\genK}_{\ol{i}} {\genE}_{\ol{i-1}} +  {\genE}_{\ol{i-1}} {\genK}_{\ol{i}} =   {\genK}_{i}^{-1} {\genE}_{i-1}, \\
& {\genK}_{\ol{i}} {\genF}_{\ol{i}} + {v} {\genF}_{\ol{i}} {\genK}_{\ol{i}} =   {\genF}_{i} {\genK}_{i}, \qquad
	{v} {\genK}_{\ol{i}} {\genF}_{\ol{i-1}} +  {\genF}_{\ol{i-1}} {\genK}_{\ol{i}} = {\genK}_{i} {\genF}_{i-1}, \\
&
 {\genK}_{\ol{i}} {\genE}_{j} - {\genE}_{j} {\genK}_{\ol{i}} =  {\genK}_{\ol{i}} {\genF}_{j} - {\genF}_{j} {\genK}_{\ol{i}}
 = {\genK}_{\ol{i}} {\genE}_{\ol{j}} + {\genE}_{\ol{j}} {\genK}_{\ol{i}} =  {\genK}_{\ol{i}} {\genF}_{\ol{j}} + {\genF}_{\ol{j}} {\genK}_{\ol{i}}
	= 0 \mbox{ for } j \ne i, i-1; \\
({\rm QQ4}) \quad
& {\genE}_{i} {\genF}_{j} - {\genF}_{j} {\genE}_{i}
	= \delta_{i,j}  \frac{{\genK}_{i} {\genK}_{i+1}^{-1} - {\genK}_{i}^{-1}{\genK}_{i+1}}{{v} - {v}^{-1}}, \\
&
{\genE}_{\ol{i}} {\genF}_{\ol{j}} + {\genF}_{\ol{j}} {\genE}_{\ol{i}}
	= \delta_{i,j}  ( \frac{{\genK}_{i} {\genK}_{i+1} - {\genK}_{i}^{-1} {\genK}_{i+1}^{-1}}{{v} - {v}^{-1}}
	 + ({v} - {v}^{-1}) {\genK}_{\ol{i}} {\genK}_{\ol{i+1}} )  ,\\
& {\genE}_{i} {\genF}_{\ol{j}} - {\genF}_{\ol{j}} {\genE}_{i}
	= \delta_{i,j}  ( {\genK}_{i+1}^{-1} {\genK}_{\ol{i}}  - {\genK}_{\ol{i+1}} {\genK}_{i}^{-1} ) , \qquad
 {\genE}_{\ol{i}} {\genF}_{j} - {\genF}_{j} {\genE}_{\ol{i}}
	= \delta_{i,j}  ( {\genK}_{i+1} {\genK}_{\ol{i}}  - {\genK}_{\ol{i+1}} {\genK}_{i} ) ;\\
({\rm QQ5}) \quad
&{\genE}_{\ol{i}}^2 = -\frac{ {v} - {v}^{-1} }{{v} + {v}^{-1}} {\genE}_{i}^2, \quad
	{\genF}_{\ol{i}}^2 = \frac{ {v} - {v}^{-1} }{{v} + {v}^{-1}} {\genF}_{i}^2, \\
&
{\genE}_{i} {\genE}_{\ol{j}} - {\genE}_{\ol{j}} {\genE}_{i}
	=  {\genF}_{i} {\genF}_{\ol{j}} - {\genF}_{\ol{j}} {\genF}_{i}
	= 0   \quad \mbox{ for } |i - j| \ne 1,
\\
&
{\genE}_{i} {\genE}_{j} - {\genE}_{j} {\genE}_{i} = {\genF}_{i} {\genF}_{j} - {\genF}_{j} {\genF}_{i}
= {\genE}_{\ol{i}}{\genE}_{\ol{j}}  + {\genE}_{\ol{j}}  {\genE}_{\ol{i}}= {\genF}_{\ol{i}} {\genF}_{\ol{j}}  + {\genF}_{\ol{j}} {\genF}_{\ol{i}}
	= 0 \quad \mbox{ for }|i-j|  > 1, 	
\\
& {\genE}_{i} {\genE}_{i+1} - {v} {\genE}_{i+1} {\genE}_{i}
	= {\genE}_{\ol{i}} {\genE}_{\ol{i+1}} + {v} {\genE}_{\ol{i+1}} {\genE}_{\ol{i}}, \qquad
  {\genE}_{i} {\genE}_{\ol{i+1}} - {v} {\genE}_{\ol{i+1}} {\genE}_{i}
	= {\genE}_{\ol{i}} {\genE}_{i+1} - {v} {\genE}_{i+1} {\genE}_{\ol{i}}, \\
& {\genF}_{i} {\genF}_{i+1} - {v} {\genF}_{i+1} {\genF}_{i}
	= - ({\genF}_{\ol{i}} {\genF}_{\ol{i+1}} + {v} {\genF}_{\ol{i+1}} {\genF}_{\ol{i}}), \qquad
  {\genF}_{i} {\genF}_{\ol{i+1}}  - {v} {\genF}_{\ol{i+1}}  {\genF}_{i}
	=  {\genF}_{\ol{i}} {\genF}_{i+1} - {v} {\genF}_{i+1} {\genF}_{\ol{i}} ;\\
({\rm QQ6}) \quad
& {\genE}_{i}^2 {\genE}_{j} - ( {v} + {v}^{-1} ) {\genE}_{i} {\genE}_{j} {\genE}_{i} + {\genE}_{j}  {\genE}_{i}^2 = 0, \qquad
	{\genF}_{i}^2 {\genF}_{j} - ( {v} + {v}^{-1} ) {\genF}_{i} {\genF}_{j} {\genF}_{i} + {\genF}_{j}  {\genF}_{i}^2 = 0, \\
& {\genE}_{i}^2 {\genE}_{\ol{j}} - ( {v} + {v}^{-1} ) {\genE}_{i} {\genE}_{\ol{j}} {\genE}_{i} + {\genE}_{\ol{j}}  {\genE}_{i}^2 = 0, \qquad
	{\genF}_{i}^2 {\genF}_{\ol{j}} - ( {v} + {v}^{-1} ) {\genF}_{i} {\genF}_{\ol{j}} {\genF}_{i} + {\genF}_{\ol{j}}  {\genF}_{i}^2 = 0,  \\
& \qquad \mbox{ where } \quad |i-j| = 1.
\end{align*}
\end{defn}

\begin{rem}\label{induction_N}
By observing the relations in (QQ3)
and applying the third relation  in  (QQ4), one provides
\begin{align*}
{\genE}_{\ol{j}}
&= - {v} {\genK}_{j+1} {\genK}_{\ol{j+1}} {\genE}_{j} + {\genK}_{j+1}  {\genE}_{j}  {\genK}_{\ol{j+1}} , \\
{\genF}_{\ol{j}}
&= {v}  {\genK}_{j+1}^{-1} {\genK}_{\ol{j+1}} {\genF}_{j} - {\genK}_{j+1}^{-1}  {\genF}_{j}   {\genK}_{\ol{j+1}},\\
{\genK}_{\ol{j}}
&=
	{\genE}_{j} {\genF}_{\ol{j}}  {\genK}_{j+1}  - {\genF}_{\ol{j}} {\genE}_{j} {\genK}_{j+1}  +  {\genK}_{j}^{-1} {\genK}_{\ol{j+1}} {\genK}_{j+1}\\
&= {\genE}_{j}  {\genK}_{\ol{j+1}} {\genF}_{j}
	 - {v}^{-1} {\genE}_{j}  {\genF}_{j}  {\genK}_{\ol{j+1}}
		- {v}  {\genK}_{\ol{j+1}} {\genF}_{j}  {\genE}_{j}
		+  {\genF}_{j}  {\genK}_{\ol{j+1}} {\genE}_{j}
		+  {\genK}_{j}^{-1} {\genK}_{\ol{j+1}} {\genK}_{j+1} .
\end{align*}
Hence by induction on {$j$} in descending order  from ({$n-1$}) to $1$,
one can obtain all the other odd generators in $\Uvqn$. 
In other words,
$\Uvqn$ could be generated by $\{ {\genE}_j, {\genF}_j, {\genK}_i^{\pm1}, {\genK}_{\ol{n}}\where  1\le j \le n-1, \  1 \le i \le n \}$.
Moreover, as  in \cite{GJKKK} and \cite{DLZ},
some of the relations (QQ1)--(QQ6) could be omitted.
\end{rem}

Following \cite{CW}, there is an anti-involution 
$\Omega$ over {$\Uvqn$} with action given by 
\begin{equation}\label{omega}
\begin{aligned}
	&\Omega (v)  = v^{-1}, \quad
	\Omega(\genE_{{j}} ) =\genF_{{j}}, \quad \Omega ( \genF_{{j}} )=\genE_{{j}}, \quad \Omega ( \genK_{{i}} )=\genK_{{i}}^{-1}, \\
	&\Omega(  \genE_{\ol{j}}) =\genF_{\ol{j}}, \quad \Omega (\genF_{\ol{j}} ) =\genE_{\ol{j}}, 
		\quad \Omega (\genK_{\ol{i}}) =\genK_{\ol{i}},	
\end{aligned}
\end{equation}
where $1\le i \le n$ and $1\le j \le n-1 $. 

Let {${\boldsymbol U}_{v}^{\ol{0}}$} be the subalgebra of {$\Uvqn$} generated by the even generators {$\genK_i$}, {$\genE_j$}, {$\genF_j$}({$1 \le i \le n$}, {$1 \le j \le n-1$}) associated with some defining relations.
 To obtain the action of $\fcB$ on $\Uvqn$, we consider the action on the even part as the first step. 
 
Denote {$\wave{\genK}_{j} = \genK_{j} \genK_{j+1}^{-1}$}.
Referring to  Definition \ref{def_uvsln} and Definition \ref{defqn},
it is verified that there is an algebra monomorphism 
\begin{align*}
&   \pi_n: \Uvsln \to {\boldsymbol U}_{v}^{\ol{0}}, \\
&    K_j \mapsto \wave{\genK}_j, \qquad
 E_{j} \mapsto \genE_{j}, \qquad
 F_{j} \mapsto \genF_{j}, \qquad
\mbox{ for } j=1, \cdots , n-1.
\end{align*}
For any {$ X \in \pi_n (\Uvsln)$},
we may define the action of {$ \fc{B}$} on it  
as 
\begin{align*}
T_{i} ( X ) = \pi_n(T_i ( \pi_n^{-1} (X) ).
\end{align*}
Referring to Proposition \ref{braid_on_gn},
we have
\begin{equation}\label{action_even_kk}
\begin{aligned}
&
T_{i} ({\genK}_{i} {\genK}_{i+1}^{-1})  
=  ({\genK}_{i} {\genK}_{i+1}^{-1}) ^{-1} 
= {\genK}_{i}^{-1} {\genK}_{i+1} ,\\
&
T_{i} ({\genE}_{i}) = -{\genF}_{i} {\genK}_{i} {\genK}_{i+1}^{-1} , \quad
T_{i} ({\genF}_{i}) = - {\genK}_{i}^{-1} {\genK}_{i+1}  {\genE}_{i} ,\\
&
T_{i} ({\genK}_{i+1} {\genK}_{i+2}^{-1} )
 =  ({\genK}_{i+1} {\genK}_{i+2}^{-1})   ({\genK}_{i} {\genK}_{i+1}^{-1})   
  =   {\genK}_{i}    {\genK}_{i+2}^{-1}  ,\\
&
T_{i} ({\genK}_{i-1} {\genK}_{i}^{-1} ) 
=  ({\genK}_{i-1} {\genK}_{i}^{-1})   ({\genK}_{i} {\genK}_{i+1}^{-1})   
=  {\genK}_{i-1}   {\genK}_{i+1}^{-1} ,\\
& T_{i} ({\genE}_{j}) 
=   -  {\genE}_{i} {\genE}_{j}  +   {v}^{-1} {\genE}_{j} {\genE}_{i}, \quad
T_{i} ({\genF}_{j}) 
= -   {\genF}_{j} {\genF}_{i} +   {v} {\genF}_{i}  {\genF}_{j}  \quad \mbox{ for} \enspace |i - j| = 1, \\
&
T_{i} ({\genK}_{j} {\genK}_{j+1}^{-1} ) = {\genK}_{j} {\genK}_{j+1}^{-1}   , \quad
T_{i} ({\genE}_{j}) = {\genE}_{j}, \quad
T_{i} ({\genF}_{j}) = {\genF}_{j} 
 \quad \mbox{ for}  \enspace |i - j| > 1.
\end{aligned}
\end{equation}

Then the action of {$ \fc{B}$}  on  $\Uvsln$ will induce the action on {$\Uvqn$}.  
\begin{thm}\label{action_uvqn}
The  braid group  {$\fcB$} acts by {$\Qv$}-algebra automorphisms on {$\Uvqn$}.
The action of  {$\fcB$} on the even generators are defined as 
\begin{align*}
&
T_{j} ({\genK}^{\pm 1} _{i}  )  =   {\genK}^{\pm 1} _{s_{j}(i)}
\mbox{ for}\enspace 1 \le j \le n-1,   \quad 1 \le i \le n;\\
&
T_{i} ({\genE}_{i}) = - {\genF}_{i} {\genK}_{i} {\genK}_{i+1}^{-1} , \quad
T_{i} ({\genF}_{i}) = -  {\genK}_{i}^{-1} {\genK}_{i+1} {\genE}_{i} ;\\
& 
T_{i} ({\genE}_{j}) 
=   -  {\genE}_{i} {\genE}_{j}  +   {v}^{-1} {\genE}_{j} {\genE}_{i}, \quad
T_{i} ({\genF}_{j}) 
=  -   {\genF}_{j} {\genF}_{i} +   {v} {\genF}_{i}  {\genF}_{j}  \quad \mbox{ for} \enspace |i - j| = 1; \\
&
T_{i} ({\genE}_{j}) = {\genE}_{j} , \quad
T_{i} ({\genF}_{j}) = {\genF}_{j} 
 \quad \mbox{ for}\enspace |i - j| > 1.
\end{align*}
The action of  {$\fcB$} on the odd generators are defined as 
\begin{align*}
	&
	 T_{i} ({\genK}_{\ol{i-1}}  )  =   {\genK}_{\ol{i-1}},\quad
	T_{i} ({\genK}_{\ol{i}}  )  =   {\genK}_{\ol{i+1}} ,\quad
	T_{i} ({\genK}_{\ol{i+1}}  )  =  (v-v^{-1}) {\genK}_{\ol{i+1}}\genF_i\genE_i -(v-v^{-1})\genF_i\genE_i {\genK}_{\ol{i+1}} +\genK_{\ol{i}},  \\	
	&T_{i} ({\genE}_{\ol{i}}) = -{\genK}_{\ol{i+1}}{\genF}_{i}  {\genK}_{i} +v{\genF}_{i}{\genK}_{\ol{i+1}}  {\genK}_{i}, \quad
	T_{i} ({\genF}_{\ol{i}}) = - {\genK}_{i}^{-1} {\genE}_{i}{\genK}_{\ol{i+1}} +v^{-1} {\genK}_{\ol{i+1}}{\genK}_{i}^{-1} {\genE}_{i},\\
	& 
	 T_{i} ({\genE}_{\ol{j}}) 
	=   -  {\genE}_{i} {\genE}_{\ol{j}}  +   {v}^{-1} {\genE}_{\ol{j}} {\genE}_{i}, \quad
	T_{i} ({\genF}_{\ol{j}}) 
	= -   {\genF}_{\ol{j}} {\genF}_{i} +   {v} {\genF}_{i}  {\genF}_{\ol{j}} 
	 \quad \mbox{ for} \enspace |i - j| = 1, \\
	&
	T_{i} ({\genK}_{\ol{j}}  ) = {\genK}_{\ol{j}} , \quad
	T_{i} ({\genE}_{\ol{j}}) = {\genE}_{\ol{j}}, \quad
	T_{i} ({\genF}_{\ol{j}}) = {\genF}_{\ol{j}} 
	\quad \mbox{ for}\enspace |i - j| > 1.
\end{align*}
The inverse of $T_{i}$ is given by
\begin{align*}
&
T_{i}^{-1} ({\genK}^{\pm 1} _{i-1}  )  =   {\genK}^{\pm 1} _{i-1},\quad
T_{i}^{-1} ({\genK}^{\pm 1} _{i}  )  =   {\genK}^{\pm 1} _{i+1},\quad
T_{i} ^{-1}({\genK}^{\pm 1} _{i+1}  )  =   {\genK}^{\pm 1} _{i}  , \\
&
T_{i} ^{-1}({\genE}_{i}) =  - {\genK}_{i+1}{\genK}_{i}^{-1}{\genF}_{i}  , \quad
T_{i} ^{-1}({\genF}_{i}) = -   {\genE}_{i} {\genK}_{i} {\genK}_{i+1}^{-1},\\
& 
T_{i} ^{-1}({\genE}_{j}) 
=   -  {\genE}_{j}{\genE}_{i}   +   {v}^{-1}  {\genE}_{i}{\genE}_{j}, \quad
T_{i}^{-1} ({\genF}_{j}) 
= -    {\genF}_{i} {\genF}_{j}+   {v}   {\genF}_{j} {\genF}_{i} \quad \mbox{ for} \enspace |i - j| = 1, \\
&
T_{i}^{-1} ({\genK}^{\pm 1} _{j}  ) = {\genK}^{\pm 1} _{j} , \quad
T_{i}^{-1} ({\genE}_{j}) = {\genE}_{j}, \quad
T_{i}^{-1} ({\genF}_{j}) = {\genF}_{j} 
\quad \mbox{ for} \enspace |i - j| > 1;\\
 &
	T_{i}^{-1} ({\genK}_{\ol{i-1}}  )  =   {\genK}_{\ol{i-1}},\quad
	T_{i}^{-1} ({\genK}_{\ol{i+1}}  )  =   {\genK}_{\ol{i}} ,\\
	&
	T_{i}^{-1}  ({\genK}_{\ol{i}}  )=(v-v^{-1}) \genE_i \genF_i \genK_{\ol{i}} -(v-v^{-1}) \genE_i \genF_i \genK_{\ol{i}} +{\genK}_{\ol{i+1}}, \\	
	&T_{i}^{-1}  ({\genE}_{\ol{i}}) = -{\genK}_{{i+1}}{\genF}_{i}  {\genK}_{\ol{i}} +v{\genK}_{{i+1}}  {\genK}_{\ol{i}}{\genF}_{i}, \quad
	T_{i}^{-1}  ({\genF}_{\ol{i}}) = -{\genK}_{\ol{i}}{\genE}_{i} {\genK}_{i+1}^{-1}  +v^{-1} {\genE}_{i}{\genK}_{\ol{i}}{\genK}_{i+1}^{-1} ,\\	
	& T_{i}^{-1} ({\genE}_{\ol{j}}) 
	=   -  {\genE}_{j} {\genE}_{\ol{i}}  +   {v}^{-1} {\genE}_{\ol{i}} {\genE}_{j}, \quad
	T_{i}^{-1} ({\genF}_{\ol{j}}) 
	= -   {\genF}_{{i}} {\genF}_{\ol{j}} +   {v} {\genF}_{\ol{j}}  {\genF}_{{i}}  \quad \mbox{ for} \enspace |i - j| = 1, \\
	&
	T_{i}^{-1} ({\genK}_{\ol{j}}  ) = {\genK}_{\ol{j}} , \quad
	T_{i}^{-1} ({\genE}_{\ol{j}}) = {\genE}_{\ol{j}}, \quad
	T_{i}^{-1} ({\genF}_{\ol{j}}) = {\genF}_{\ol{j}} 
	\quad \mbox{ for}  \enspace |i - j| > 1.
\end{align*}
\end{thm}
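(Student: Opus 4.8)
The plan is to read the displayed formulas as the \emph{definition} of $\Qv$-linear maps $T_m$ ($1\le m\le n-1$) on the generators of $\Uvqn$, and then to establish, in order: (i) each $T_m$ extends to a well-defined $\Qv$-algebra endomorphism of $\Uvqn$; (ii) the $T_m$ satisfy the braid relations \eqref{T_relation}; (iii) $T_m$ and the displayed $T_m^{-1}$ are mutually inverse. Together these give $T_m\in\mathrm{Aut}_{\Qv}(\Uvqn)$ and hence an action of $\fcB$; one also checks at a glance that $T_m$ preserves the $\ZZ_2$-grading, so it is even a superalgebra map. Throughout I will use Remark \ref{induction_N}: since $\Uvqn$ is generated by the small set $\{\genE_j,\genF_j,\genK_i^{\pm 1},\genK_{\ol{n}}\}$ ($1\le j\le n-1$, $1\le i\le n$) and every other odd generator is an explicit polynomial in it, two algebra endomorphisms of $\Uvqn$ coincide once they agree on this set; this reduces (ii) and (iii) to finitely many generator checks, and by the reduced presentations of \cite{GJKKK} and \cite{DLZ} it also lets one shorten (i) to a proper subset of (QQ1)--(QQ6) (I describe the argument for the full list).

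For (i) I would first dispatch the relations supported on the even part. The formulas for $T_m$ on $\wave{\genK}_j=\genK_j\genK_{j+1}^{-1}$, $\genE_j$ and $\genF_j$ agree with $\pi_n\circ T_m\circ\pi_n^{-1}$ as recorded in \eqref{action_even_kk}, where $\pi_n\colon\Uvsln\hookrightarrow{\boldsymbol U}_{v}^{\ol{0}}$ is the monomorphism introduced before the theorem; hence, by Proposition \ref{braid_on_gn}, every defining relation of $\Uvqn$ that is the $\pi_n$-image of a relation of $\Uvsln$ — the first line of (QQ4), the $\genE$-$\genE$ and $\genF$-$\genF$ relations of (QQ5), and (QQ6) for even generators — is preserved. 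For the relations that genuinely involve the individual $\genK_i$, namely (QQ1) (apart from the $\genK_{\ol{i}}$-relations) and (QQ2) for both even and odd generators, I use that $T_m$ permutes the $\genK_i$ by $s_m\in\fS_n$ and carries a $\genK$-weight-$\lambda$ element to a $\genK$-weight-$s_m(\lambda)$ one; the only point to verify is then $(\bs{\ep}_{s_m(k)},s_m(\alpha_j))=(\bs{\ep}_k,\alpha_j)$, which is the orthogonality of $s_m$ together with \eqref{action_dual}--\eqref{action_dual_ep}.

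The real content of (i) is the odd part: $T_m$ must preserve the $\genK_{\ol{i}}$-relations of (QQ1), all of (QQ3), the last two lines of (QQ4), the odd relations of (QQ5) (in particular $\genE_{\ol{i}}^2=-\frac{v-v^{-1}}{v+v^{-1}}\genE_i^2$), and the last two lines of (QQ6). Here the anti-involution $\Omega$ of \eqref{omega} roughly halves the work: a short check on generators shows $\Omega\,T_m=T_m\,\Omega$, so once a relation or identity involving $\genE_j,\genE_{\ol{j}},\genK_{\ol{i}}$ is verified, its $\genF$-counterpart follows by applying $\Omega$. The remaining verifications are finite but laborious: into each odd relation one substitutes the displayed formulas, the most unpleasant being
\[
T_m(\genK_{\ol{m+1}})=(v-v^{-1})\genK_{\ol{m+1}}\genF_m\genE_m-(v-v^{-1})\genF_m\genE_m\genK_{\ol{m+1}}+\genK_{\ol{m}},
\]
and simplifies, repeatedly using the $v$-commutators of (QQ1)--(QQ3) to move the $\genK$'s past the $\genE$'s and $\genF$'s and (QQ4) to contract the resulting $\genE_m\genF_m-\genF_m\genE_m$ combinations. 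I expect this to be the main obstacle: it carries essentially all of the computational content of the theorem, even though each individual manipulation is a routine monomial rewrite. A convenient safeguard is that the displayed formulas for $T_m$ on $\genE_{\ol{j}},\genF_{\ol{j}},\genK_{\ol{j}}$ are exactly what is forced by applying $T_m$ to the expressions of Remark \ref{induction_N}; checking this both validates those formulas and removes them from the list still to be treated.

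Finally (ii) and (iii). By the reduction above it suffices to evaluate the relevant composites on $\genE_j,\genF_j,\genK_i^{\pm 1}$ and $\genK_{\ol{n}}$. On $\genK_i^{\pm 1}$ the braid relations reduce to $s_ms_{m+1}s_m=s_{m+1}s_ms_{m+1}$ and, for $|m-k|>1$, $s_ms_k=s_ks_m$, in $\fS_n$; on $\genE_j$ and $\genF_j$ they follow, through $\pi_n$, from Proposition \ref{braid_on_gn}. On $\genK_{\ol{n}}$ one observes that only $T_{n-1}$ acts nontrivially, so the sole case needing a computation is $T_{n-2}T_{n-1}T_{n-2}(\genK_{\ol{n}})=T_{n-1}T_{n-2}T_{n-1}(\genK_{\ol{n}})$, which one settles by expanding $T_{n-1}(\genK_{\ol{n}})$ via Remark \ref{induction_N}. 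For (iii) one checks, by the same method as for $T_m$, that the displayed $T_m^{-1}$ formulas also define an algebra endomorphism, and that $T_m\circ T_m^{-1}$ and $T_m^{-1}\circ T_m$ fix each of $\genE_j,\genF_j,\genK_i^{\pm 1},\genK_{\ol{n}}$; being algebra endomorphisms that fix a generating set, they equal the identity, so $T_m$ is invertible with the stated inverse. Assembling (i)--(iii) yields the asserted action of $\fcB$ by $\Qv$-algebra automorphisms on $\Uvqn$.
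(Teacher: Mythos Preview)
Your proposal is correct and follows the same three-part architecture as the paper: show each $T_m$ preserves the relations (QQ1)--(QQ6) (the paper's Lemma \ref{verify2}), show the braid relations hold on generators (Lemma \ref{verify1} together with the even part), and establish the inverse formulas; both you and the paper invoke the anti-involution $\Omega$ to transfer the $\genE$-side checks to the $\genF$-side. The one real difference is your systematic use of Remark \ref{induction_N}: by passing to the small generating set $\{\genE_j,\genF_j,\genK_i^{\pm1},\genK_{\ol{n}}\}$ you reduce the braid-relation and inverse verifications on odd generators to the single element $\genK_{\ol{n}}$ (so that only $T_{n-2}T_{n-1}T_{n-2}(\genK_{\ol{n}})=T_{n-1}T_{n-2}T_{n-1}(\genK_{\ol{n}})$ needs computing), whereas the paper checks $T_jT_{j+1}T_j=T_{j+1}T_jT_{j+1}$ directly on every $\genK_{\ol{k}}$ and $\genE_{\ol{k}}$. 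This is a genuine economy but not a different idea; the computational core, namely step (i), is handled identically in both.
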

We will prove this theorem in several parts.

\textbf{Proof of the action on the even generators:}
%\begin{proof}
It is trivial the map {$\pi_n$} holds under the action of  {$T_j$}.
We only need to verify that the action of {$T_j$} on {$\genK_i$} 
satisfy the equations in \eqref{action_even_kk} 
and the definition of braid group $\fc{B}$, for all {$1\leq j \leq  n-1$} and {$1\leq i \leq n$}.

By  {$T_{i} ({\genK}_{i}  ) = {\genK}_{i+1}  $}, {$T_{i+1} ({\genK}_{i}  ) = {\genK}_{i}  $}, 
{$T_{i} ({\genK}_{i+1}  ) =   {\genK}_{i}  $} 
and {$T_{i} (  {\genK}_{i+2} ) =   {\genK}_{i+2}  $},
we have 
\begin{align*}
&
T_{i} ({\genK}_{i} {\genK}_{i+1}^{-1})  
= T_{i} ({\genK}_{i}) T_{i} ( {\genK}_{i+1}^{-1})  
= {\genK}_{i}^{-1} {\genK}_{i+1} ,\\
&
T_{i} ({\genK}_{i+1} {\genK}_{i+2}^{-1} )
=T_{i} ({\genK}_{i+1}) T_{i} ( {\genK}_{i+2}^{-1})  
= {\genK}_{i} {\genK}_{i+2}^{-1} , \\
&
T_{i} ({\genK}_{i-1} {\genK}_{i}^{-1} )
=T_{i} ({\genK}_{i-1}) T_{i} ( {\genK}_{i}^{-1})  
= {\genK}_{i-1} {\genK}_{i+1}^{-1} ,\\
&
T_{i} ({\genK}_{j} {\genK}_{j+1}^{-1} )
=T_{i}({\genK}_{j})T_{i}({\genK}_{j+1}^{-1})
={\genK}_{j} {\genK}_{j+1}^{-1} 
\quad \mbox{for } |i-j|>1.
\end{align*}
And direct calculation leads
\begin{align*}
  	&T_iT_{i+1}T_i({\genK}_{i-1})=T_iT_{i+1}({\genK}_{i-1})=T_i({\genK}_{i-1})={\genK}_{i-1},\\
  	&T_{i+1}T_{i}T_{i+1}({\genK}_{i-1})=T_{i+1}T_{i}({\genK}_{i-1})=T_{i+1}({\genK}_{i-1})={\genK}_{i-1};\\
  	&T_iT_{i+1}T_i({\genK}_i)= T_iT_{i+1}({\genK}_{i+1})=T_i({\genK}_{i+2})={\genK}_{i+2} ,\\                       
  	&T_{i+1}T_{i}T_{i+1}({\genK}_i)=T_{i+1}T_{i}({\genK}_i) =T_{i+1}({\genK}_{i+1})={\genK}_{i+2};
  	\\
  	&T_iT_{i+1}T_i({\genK}_{i+1}) =T_iT_{i+1}({\genK}_i)=T_i({\genK}_i)= {\genK}_{i+1},\\
  	&T_{i+1}T_{i}T_{i+1}({\genK}_{i+1})=T_{i+1}T_{i}({\genK}_{i+2})= T_{i+1}({\genK}_{i+2}) = {\genK}_{i+1};
  	\\
  	&T_iT_{i+1}T_i({\genK}_{i+2}) =T_iT_{i+1}({\genK}_{i+2})=T_i({\genK}_{i+1})={\genK}_i,\\
  	&T_{i+1}T_{i}T_{i+1}({\genK}_{i+2})=T_{i+1}T_{i}({\genK}_{i+1})=T_{i+1}({\genK}_i)={\genK}_i.
\end{align*}  	
Thus, the composite action $T_i T_{i+1} T_i $ and $T_{i+1} T_i T_{i+1}$ coincide on the generators $ \genK_{j}$. 
Similarly, we see that the composite action $T_iT_j$ and $T_jT_i$ are equal, where $|i-j|>1$.
Thus, the proposition is completely proven.
%\end{proof}

\textbf{Proof of the action on the odd generators:}
%By applying Remark \ref{induction_N},
 %we  provide the action of  the braid group {$  \fc{B}$}   on the odd generators of {$\Uvqn$} as follows.
To prove the action on the odd generators, we need the following two lemmas.
\begin{lem}\label{verify1}
For any 
 {$X \in \{ \genK_{\ol{i}},  \genE_{\ol{j}}, \genF_{\ol{j}} \where 1\le i \le n, \ 1\le j \le n-1\}$}, we have
\begin{align*}
& T_{j} T_{j+1} T_{j} (X)= T_{j+1} T_{j} T_{j+1} (X),\\
&T_{i} T_j (X) = T_j T_{i} (X) \quad \mbox{where } |i-j|>1.
\end{align*}
In other word, $T_{j} (1 \le j \le n-1)$ satisfies the relations in \eqref{T_relation}.
\end{lem}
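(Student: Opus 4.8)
The plan is to reduce the braid identities from all odd generators to the single generator $\genK_{\ol n}$, and from there to one genuinely nontrivial computation. Throughout I would use that each $T_j$ is a well-defined superalgebra endomorphism of $\Uvqn$ (the preservation of the defining relations (QQ1)--(QQ6) is the content of the companion lemma, and is convenient to take as given here). Since a composite of superalgebra endomorphisms is again one, and since two superalgebra endomorphisms that agree on a set of algebra generators agree on all of $\Uvqn$, it suffices to verify the identities of the lemma on a generating set. By Remark~\ref{induction_N}, $\Uvqn$ is generated by $\{\genE_j,\genF_j,\genK_i^{\pm 1},\genK_{\ol n}\}$, and the braid relations on the even generators $\genE_j$, $\genF_j$, $\genK_i^{\pm 1}$ were already checked in the verification of the action on the even generators. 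Hence everything reduces to proving $T_jT_{j+1}T_j(\genK_{\ol n})=T_{j+1}T_jT_{j+1}(\genK_{\ol n})$ for $1\le j\le n-2$ and $T_iT_j(\genK_{\ol n})=T_jT_i(\genK_{\ol n})$ for $|i-j|>1$.

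First I would observe, from the explicit formulas in Theorem~\ref{action_uvqn}, that the only $T_i$ with $1\le i\le n-1$ acting nontrivially on $\genK_{\ol n}$ is $T_{n-1}$: for $i\le n-2$ one has $|i-n|>1$, so $T_i(\genK_{\ol n})=\genK_{\ol n}$. This settles almost all cases at once. For the commutation relations with $|i-j|>1$, one of $i,j$, say $i$, must equal $n-1$ and then $j\le n-3$; since such a $T_j$ fixes $\genK_{\ol n}$ as well as every generator occurring in $T_{n-1}(\genK_{\ol n})$ (namely $\genK_{\ol n}$, $\genK_{\ol{n-1}}$, $\genE_{n-1}$, $\genF_{n-1}$, all carrying an index at distance $\ge 2$ from $j$), both sides equal $T_{n-1}(\genK_{\ol n})$. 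For the braid relations with $j\le n-3$, both $T_j$ and $T_{j+1}$ fix $\genK_{\ol n}$, so both composites return $\genK_{\ol n}$. The only surviving case is $j=n-2$, and there $T_{n-2}(\genK_{\ol n})=\genK_{\ol n}$ reduces the claim to the single identity
\[
T_{n-2}T_{n-1}(\genK_{\ol n})=T_{n-1}T_{n-2}T_{n-1}(\genK_{\ol n}).
\]

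The last identity is the main obstacle, and I would dispatch it by direct computation: substitute the formula for $T_{n-1}(\genK_{\ol n})$ from Theorem~\ref{action_uvqn}, then apply $T_{n-2}$, which acts nontrivially on $\genK_{\ol{n-1}}$, $\genE_{n-1}$ and $\genF_{n-1}$ because $|(n-2)-(n-1)|=1$; expand both sides as words in $\genK_i^{\pm 1}$, $\genK_{\ol i}$, $\genE_i$, $\genF_i$ and simplify using (QQ1)--(QQ6), principally the commutation relations among the $\genE_i$, $\genF_i$ and between these and the $\genK_{\ol i}$. This is lengthy but mechanical, and since the indices $n-2$ and $n-1$ simply play the role of two generic consecutive nodes it is a computation of bounded size, independent of $n$. (If one prefers to avoid the reduction step, the same braid identities can instead be checked directly on each of $\genK_{\ol i}$, $\genE_{\ol j}$, $\genF_{\ol j}$ using their explicit images; this is more laborious but presents no new difficulty.)
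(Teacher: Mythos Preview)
Your argument is correct and genuinely different from the paper's. The paper proceeds by brute force: it directly computes $T_jT_{j+1}T_j(X)$ and $T_{j+1}T_jT_{j+1}(X)$ for $X=\genK_{\ol{j-1}},\genK_{\ol j},\genK_{\ol{j+1}},\genK_{\ol{j+2}}$ and then for $X=\genE_{\ol{j-1}},\genE_{\ol j},\genE_{\ol{j+1}},\genE_{\ol{j+2}}$ (invoking the anti-involution $\Omega$ to deduce the $\genF_{\ol j}$ cases), matching the two sides term by term in each case. Your reduction via Remark~\ref{induction_N} to the single odd generator $\genK_{\ol n}$, followed by the observation that only $T_{n-1}$ moves it, collapses everything to the one identity $T_{n-2}T_{n-1}(\genK_{\ol n})=T_{n-1}T_{n-2}T_{n-1}(\genK_{\ol n})$; this is exactly the paper's $\genK_{\ol{j+2}}$ computation specialized to $j=n-2$, so the remaining work is a strict subset of what the paper does. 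The trade-off is logical ordering: your proof requires Lemma~\ref{verify2} (that each $T_i$ is a superalgebra endomorphism) as input, whereas the paper's direct verification keeps Lemmas~\ref{verify1} and~\ref{verify2} independent. Since the two lemmas do not cite each other in the paper, swapping their order is harmless, and your route is considerably shorter.
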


\begin{proof}	
For $X= \genK_{\ol{j}} $, we see that 
\begin{align*}
  	&T_{j}T_{j+1}T_{j}({\genK}_{\ol{j-1}})=T_{j}T_{j+1}({\genK}_{\ol{j-1}})={\genK}_{\ol{j-1}},\\
  	&T_{j+1}T_{j}T_{j+1}({\genK}_{\ol{j-1}})=T_{j+1}T_{j}({\genK}_{\ol{j-1}})=T_{j+1}({\genK}_{\ol{j-1}})={\genK}_{\ol{j-1}};\\
  	&T_{j}T_{j+1}T_{j}({\genK}_{\ol{j}})=T_{j}T_{j+1}({\genK}_{\ol{j+1}})=T_{j}({\genK}_{\ol{j+2}})={\genK}_{\ol{j+2}} ,\\                       
  	&T_{j+1}T_{j}T_{j+1}({\genK}_{\ol{j}})=T_{j+1}T_{j}({\genK}_{\ol{j}})=T_{j+1}({\genK}_{\ol{j+1}})={\genK}_{\ol{j+2}} ;\\
  	&T_{j}T_{j+1}T_{j}({\genK}_{\ol{j+1}})
  	=T_{j}T_{j+1}((v-v^{-1})\genK_{\ol{j+1}}\genF_{j}\genE_{j}-(v-v^{-1})\genF_{j}\genE_{j}\genK_{\ol{j+1}}+\genK_{\ol{j}})\\
  	&\qquad\qquad\qquad=(v-v^{-1})\genK_{\ol{j+2}}\genF_{j+1}\genE_{j+1}-(v-v^{-1})\genF_{j+1}\genE_{j+1}\genK_{\ol{j+2}}+\genK_{\ol{j+1}},\\	
  	&T_{j+1}T_{j}T_{j+1}({\genK}_{\ol{j+1}})
  	=T_{j+1}T_{j}({\genK}_{\ol{j+2}})
  	=(v-v^{-1})\genK_{\ol{j+2}}\genF_{j+1}\genE_{j+1}-(v-v^{-1})\genF_{j+1}\genE_{j+1}\genK_{\ol{j+2}}+\genK_{\ol{j+1}};\\
  	  &T_{j}T_{j+1}T_{j}({\genK}_{\ol{j+2}})=T_{j}T_{j+1}({\genK}_{\ol{j+2}})\\
  	&\qquad\qquad\qquad=T_{j}((v-v^{-1})\genK_{\ol{j+2}}\genF_{j+1}\genE_{j+1}-(v-v^{-1})\genF_{j+1}\genE_{j+1}\genK_{\ol{j+2}}+\genK_{\ol{j+1}})\\
  	&\qquad\qquad\qquad=(v-v^{-1})T_{j}(\genK_{\ol{j+2}})T_{j}(\genE_{j+1})-(v-v^{-1})T_{j}(\genF_{j+1})T_{j}(\genE_{j+1})T_{j}(\genK_{\ol{j+2}})+T_{j}(\genK_{\ol{j+1}})\\
  	&\qquad\qquad\qquad=(v-v^{-1})\genK_{\ol{j+2}}(-\genF_{j+1}\genF_{j}+v\genF_{j}\genF_{j+1})(-\genE_{j}\genE_{j+1}+v^{-1}\genE_{j+1}\genE_{j})\\&\qquad\qquad\qquad\quad-(v-v^{-1})(-\genF_{j+1}\genF_{j}+v\genF_{j}\genF_{j+1})(-\genE_{j}\genE_{j+1}+v^{-1}\genE_{j+1}\genE_{j})\genK_{\ol{j+2}}\\&\qquad\qquad\qquad\quad+(v-v^{-1})\genK_{\ol{j+1}}\genF_{j}\genE_{j}-(v-v^{-1})\genF_{j}\genE_{j}\genK_{\ol{j+1}}+\genK_{\ol{j}}\\	
  	&\qquad\qquad\qquad=(v-v^{-1})\genK_{\ol{j+2}}\genF_{j+1}\genF_{j}\genE_{j}\genE_{j+1}-v^{-1}(v-v^{-1})\genK_{\ol{j+2}}\genF_{j+1}\genF_{j}\genE_{j+1}\genE_{j}\\&\qquad\qquad\qquad\qquad
  	-v(v-v^{-1})\genK_{\ol{j+2}}\genF_{j}\genF_{j+1}\genE_{j}\genE_{j+1}+(v-v^{-1})\genK_{\ol{j+2}}\genF_{j}\genF_{j+1}\genE_{j+1}\genE_{j}\\&\qquad\qquad\qquad\qquad
  	-(v-v^{-1})\genF_{j+1}\genF_{j}\genE_{j}\genE_{j+1}\genK_{\ol{j+2}}
  	+v^{-1}(v-v^{-1})\genF_{j+1}\genF_{j}\genE_{j+1}\genE_{j}\genK_{\ol{j+2}}\\&\qquad\qquad\qquad\qquad
  	+v(v-v^{-1})\genF_{j}\genF_{j+1}\genE_{j}\genE_{j+1}\genK_{\ol{j+2}}
  	-(v-v^{-1})\genF_{j}\genF_{j+1}\genE_{j+1}\genE_{j}\genK_{\ol{j+2}}\\&\qquad\qquad\qquad\qquad
  	+(v-v^{-1})\genK_{\ol{j+1}}\genF_{j}\genE_{j}
  	-(v-v^{-1})\genF_{j}\genE_{j}\genK_{\ol{j+1}}
  	+\genK_{\ol{j}},
  	\\
  	&T_{j+1}T_{j}T_{j+1}({\genK}_{\ol{j+2}})=T_{j+1}T_{j}((v-v^{-1})\genK_{\ol{j+2}}\genF_{j+1}\genE_{j+1}-(v-v^{-1})\genF_{j+1}\genE_{j+1}\genK_{\ol{j+2}}+\genK_{{j+1}})\\
  	&\qquad\qquad\qquad\quad=(v-v^{-1})T_{j+1}(\genK_{\ol{j+2}})T_{j+1}T_{j+1}^{-1}(\genF_{j})T_{j+1}T_{j+1}^{-1}(\genE_{j})\\&\qquad\qquad\qquad\qquad-(v-v^{-1})T_{j+1}T_{j+1}^{-1}(\genF_{j})T_{j+1}T_{j+1}^{-1}(\genE_{j})T_{j+1}(\genK_{\ol{j+2}})\\&\qquad\qquad\qquad\qquad
  	+T_{j+1}((v-v^{-1})\genK_{\ol{j+1}}\genF_{j}\genE_{j}-(v-v^{-1})\genF_{j}\genE_{j}\genK_{\ol{j+1}}+\genK_{\ol{j}})\\
  	&\qquad\qquad\qquad\quad=(v-v^{-1})((v-v^{-1})\genK_{\ol{j+2}}\genF_{j+1}\genE_{j+1}-(v-v^{-1})\genF_{j+1}\genE_{j+1}\genK_{\ol{j+2}}+\genK_{{j+1}})\genF_{j}\genE_{j}\\&\qquad\qquad\qquad\qquad-(v-v^{-1})\genF_{j}\genE_{j}((v-v^{-1})\genK_{\ol{j+2}}\genF_{j+1}\genE_{j+1}-(v-v^{-1})\genF_{j+1}\genE_{j+1}\genK_{\ol{j+2}}+\genK_{\ol{j+1}})\\&\qquad\qquad\qquad\qquad
  	+(v-v^{-1})\genK_{\ol{j+2}}(-\genF_{j}\genF_{j+1}+v\genF_{j+1}\genF_{j})(-\genE_{j+1}\genE_{j}+v^{-1}\genE_{j}\genE_{j+1})\\&\qquad\qquad\qquad\qquad-(v-v^{-1})(-\genF_{j}\genF_{j+1}+v\genF_{j+1}\genF_{j})(-\genE_{j+1}\genE_{j}+v^{-1}\genE_{j}\genE_{j+1})\genK_{\ol{j+2}}+\genK_{\ol{j}}\\			
  	&\qquad\qquad\qquad\quad=(v-v^{-1})\genK_{\ol{j+2}}\genF_{j+1}\genF_{j}\genE_{j}\genE_{j+1}-v^{-1}(v-v^{-1})\genK_{\ol{j+2}}\genF_{j+1}\genF_{j}\genE_{j+1}\genE_{j}\\&\qquad\qquad\qquad\qquad
  	-v(v-v^{-1})\genK_{\ol{j+2}}\genF_{j}\genF_{j+1}\genE_{j}\genE_{j+1}+(v-v^{-1})\genK_{\ol{j+2}}\genF_{j}\genF_{j+1}\genE_{j+1}\genE_{j}\\&\qquad\qquad\qquad\qquad
  	-(v-v^{-1})\genF_{j+1}\genF_{j}\genE_{j}\genE_{j+1}\genK_{\ol{j+2}}
  	+v^{-1}(v-v^{-1})\genF_{j+1}\genF_{j}\genE_{j+1}\genE_{j}\genK_{\ol{j+2}}\\&\qquad\qquad\qquad\qquad
  	+v(v-v^{-1})\genF_{j}\genF_{j+1}\genE_{j}\genE_{j+1}\genK_{\ol{j+2}}
  	-(v-v^{-1})\genF_{j}\genF_{j+1}\genE_{j+1}\genE_{j}\genK_{\ol{j+2}}\\&\qquad\qquad\qquad\qquad
  	+(v-v^{-1})\genK_{\ol{j+1}}\genF_{j}\genE_{j}
  	-(v-v^{-1})\genF_{j}\genE_{j}\genK_{\ol{j+1}}
  	+\genK_{\ol{j}}.	
\end{align*}
  Thus, the compositions $T_{j} T_{j+1} T_{j} $ and $T_{j+1} T_{j} T_{j+1}$ coincide on the generators $ \genK_{\ol{j}}$. 
   Similarly, we also see that  $$T_{j}T_j(\genK_{\ol{j}})= T_jT_{j}(\genK_{\ol{j}})\quad \mbox{ where }|i-j|>1.$$
  	
For $X= \genE_{\ol{j}} $ with admissible $j$, we have   
\begin{align*}
  	&T_{j}T_{j+1}T_{j}({\genE}_{\ol{j-1}})
  	=T_{j}T_{j+1}(-\genE_{j}\genE_{\ol{j-1}}+v^{-1}\genE_{\ol{j-1}}\genE_{j})\\&\qquad\qquad\qquad
  	=-\genE_{j+1}T_{j}(\genE_{\ol{j-1}})+v^{-1}T_{j}(\genE_{\ol{j-1}})\genE_{j+1}\\&\qquad\qquad\qquad
  	=-\genE_{j+1}(-\genE_{j}\genE_{\ol{j-1}}+v^{-1}\genE_{\ol{j-1}}\genE_{j})+v^{-1}(-\genE_{j}\genE_{\ol{j-1}}+v^{-1}\genE_{\ol{j-1}}\genE_{j})\genE_{j+1}\\&\qquad\qquad\qquad
  	={\genE}_{{j+1}}{\genE}_{j}{\genE}_{\ol{j-1}}-v^{-1}{\genE}_{{j+1}}{\genE}_{\ol{j-1}}{\genE}_{j}-v^{-1}{\genE}_{j}{\genE}_{\ol{j-1}}{\genE}_{{j+1}}+v^{-2}{\genE}_{\ol{j-1}}{\genE}_{j}{\genE}_{{j+1}},\\           
  	&T_{j+1}T_{j}T_{j+1}({\genE}_{\ol{j-1}})
  	=T_{j+1}T_{j}({\genE}_{\ol{j-1}})\\&\qquad\qquad\qquad\quad
  	=T_{j+1}(-\genE_{j}\genE_{\ol{j-1}}+v^{-1}\genE_{j-1}\genE_{j})\\&\qquad\qquad\qquad\quad
  	=-(-\genE_{j+1}\genE_{j}+v^{-1}\genE_{j}\genE_{j+1})\genE_{\ol{j-1}}+v^{-1}\genE_{\ol{j-1}}(-\genE_{j+1}\genE_{j}+v^{-1}\genE_{j}\genE_{j+1})\\&\qquad\qquad\qquad	\quad
  	={\genE}_{{j+1}}{\genE}_{j}{\genE}_{\ol{j-1}}-v^{-1}{\genE}_{j}{\genE}_{{j+1}}{\genE}_{\ol{j-1}}-v^{-1}{\genE}_{\ol{j-1}}{\genE}_{{j+1}}{\genE}_{j}+v^{-2}{\genE}_{\ol{j-1}}{\genE}_{j}{\genE}_{{j+1}};\\ 
  	&T_{j}T_{j+1}T_{j}({\genE}_{\ol{j}})=T_{j}T_{j+1}(-\genK_{\ol{j+1}}\genF_{j}\genK_{j}+v\genF_{j}\genK_{\ol{j+1}}\genK_{j})=-\genK_{\ol{j+2}}\genF_{j+1}\genK_{j+1}+v\genF_{j+1}\genK_{\ol{j+2}}\genK_{j+1},\\
  	&T_{j+1}T_{j}T_{j+1}({\genE}_{\ol{j}})=T_{j+1}(\genE_{\ol{j+1}})=-\genK_{\ol{j+2}}\genF_{j+1}\genK_{j+1}+v\genF_{j+1}\genK_{\ol{j+2}}\genK_{j+1} ;\\
  &T_{j}T_{j+1}T_{j}({\genE}_{\ol{j+1}})=T_{j}T_{j+1}(-\genE_{j}\genE_{\ol{j+1}}+v^{-1}\genE_{\ol{j+1}}\genE_{j})\\
  &\qquad\qquad\qquad=-\genE_{j+1}T_{j}T_{j+1}(\genE_{\ol{j+1}})+v^{-1}T_{j}T_{j+1}(\genE_{\ol{j+1}})\genE_{j+1}\\
  &\qquad\qquad\qquad=-(v-v^{-1})\genK_{\ol{j+2}}\genF_{j+1}\genE_{j+1}\genF_{j}\genK_{j}+(v-v^{-1})\genF_{j+1}\genE_{j+1}\genK_{\ol{j+2}}\genF_{j}\genK_{j}\\&\qquad\qquad\qquad\qquad+v(v-v^{-1})\genF_{j}\genK_{\ol{j+2}}\genF_{j+1}\genE_{j+1}\genK_{j}-v(v-v^{-1})\genF_{j}\genF_{j+1}\genE_{j+1}K_{\ol{j+2}}\genK_{j}\\&\qquad\qquad\qquad\qquad-\genK_{\ol{j+1}}\genF_{j}\genK_{j}+v\genF_{j}\genK_{\ol{j+1}}\genK_{j}	,
  \\
  &T_{j+1}T_{j}T_{j+1}({\genE}_{\ol{j+1}})
  =T_{j+1}T_{j}(-\genK_{\ol{j+2}}\genF_{j+1}\genK_{j+1}+v\genF_{j+1}\genK_{\ol{j+2}}\genK_{j+1})\\
  &\qquad\qquad\qquad\enspace=-T_{j+1}(\genK_{\ol{j+2}})\genF_{j}\genK_{j}+v\genF_{j}T_{j+1}(\genK_{\ol{j+2}})\genK_{j}\\
  &\qquad\qquad\qquad\enspace=-[(v-v^{-1})\genK_{\ol{j+2}}\genF_{j+1}\genE_{j+1}-(v-v^{-1})\genF_{j+1}\genE_{j+1}\genK_{\ol{j+2}}+\genK_{\ol{j+1}}]\genF_{j}\genK_{j}\\&\qquad\qquad\qquad\qquad\enspace+v\genF_{j}[(v-v^{-1})\genK_{\ol{j+2}}\genF_{j+1}\genE_{j+1}-(v-v^{-1})\genF_{j+1}\genE_{j+1}\genK_{\ol{j+2}}+\genK_{\ol{j+1}}]\genK_{j}\\
  &\qquad\qquad\qquad\enspace=-(v-v^{-1})\genK_{\ol{j+2}}\genF_{j+1}\genE_{j+1}\genF_{j}\genK_{j}+(v-v^{-1})\genF_{j+1}\genE_{j+1}\genK_{\ol{j+2}}\genF_{j}\genK_{j}\\&\qquad\qquad\qquad\qquad\enspace+v(v-v^{-1})\genF_{j}\genK_{\ol{j+2}}\genF_{j+1}\genE_{j+1}\genK_{j}-v(v-v^{-1})\genF_{j}\genF_{j+1}\genE_{j+1}\genK_{\ol{j+2}}\genK_{j}\\&\qquad\qquad\qquad\qquad\enspace-\genK_{\ol{j+1}}\genF_{j}\genK_{j}+v\genF_{j}\genK_{\ol{j+1}}\genK_{j};
  \\  	
  &T_{j}T_{j+1}T_{j}({\genE}_{\ol{j+2}})
  =T_{j}T_{j+1}({\genE}_{\ol{j+2}})\\&\qquad\qquad\qquad
  =T_{j}(-{\genE}_{j+1}{\genE}_{\ol{j+2}}+v^{-1}{\genE}_{\ol{j+2}}{\genE}_{j+1})\\&\qquad\qquad\qquad
  =-(-{\genE}_{j}{\genE}_{{j+1}}+v^{-1}{\genE}_{{j+1}}{\genE}_{j}){\genE}_{\ol{j+2}}+v^{-1}{\genE}_{\ol{j+2}}(-{\genE}_{j}{\genE}_{{j+1}}+v^{-1}{\genE}_{{j+1}}{\genE}_{j})\\&\qquad\qquad\qquad
  ={\genE}_{j}{\genE}_{j+1}{\genE}_{\ol{j+2}}-v^{-1}{\genE}_{j+1}{\genE}_{{j}}{\genE}_{\ol{j+2}}-v^{-1}{\genE}_{\ol{j+2}}{\genE}_{{j}}{\genE}_{j+1}+v^{-2}{\genE}_{\ol{j+2}}{\genE}_{j+1}{\genE}_{{j}},\\
  &T_{j+1}T_{j}T_{j+1}({\genE}_{\ol{j+2}})
  =T_{j+1}T_{j}(-{\genE}_{j+1}{\genE}_{{j+2}}+v^{-1}{\genE}_{{j+2}}{\genE}_{j+1})\\&\qquad\qquad\qquad\quad
  =-{\genE}_{j}T_{j+1}({\genE}_{{j+2}})+v^{-1}T_{j+1}({\genE}_{{j+2}}){\genE}_{j}\\&\qquad\qquad\qquad\quad
  =-{\genE}_{j}(-{\genE}_{j+1}{\genE}_{{j+2}}+v^{-1}{\genE}_{{j+2}}{\genE}_{j+1})+v^{-1}(-{\genE}_{j+1}{\genE}_{{j+2}}+v^{-1}{\genE}_{{j+2}}{\genE}_{j+1}){\genE}_{j}\\&\qquad\qquad\qquad\quad	
  ={\genE}_{{j}}{\genE}_{j+1}{\genE}_{\ol{j+2}}-v^{-1}{\genE}_{{j}}{\genE}_{\ol{j+2}}{\genE}_{j+1}-v^{-1}{\genE}_{j+1}{\genE}_{\ol{j+2}}{\genE}_{{j}}+v^{-2}{\genE}_{\ol{j+2}}{\genE}_{j+1}{\genE}_{{j}}. 	
\end{align*}
  	Thus, the compositions $T_{j} T_{j+1} T_{j} $ and $T_{j+1} T_{j} T_{j+1}$ coincide on the generators $ \genE_{\ol{j}}$. Similarly, we see that 
$$
   T_iT_j(\genE_{\ol{j}})=T_jT_i(\genE_{\ol{j}})
   \mbox{ for} \enspace |i-j|>1. 
$$
   From the previous formulas and applying $\Omega$, we deduce that they also coincide on the generators $\genF_j$.
\end{proof}

\begin{lem}\label{verify2}  
 For any $Y\in \fc{B}$, 
 %$\{ Y({\genE}_j),  Y({\genF}_j),  Y({\genK}_i ^{\pm1}), Y({\genE}_{\ol{j}}),  Y({\genF}_{\ol{j}}), Y({\genK}_{\ol{i}})\where  1\le j \le n-1,  \  1 \le i \le n \}$
 $Y$ holds  the equations  in (QQ1)-(QQ6).
\end{lem}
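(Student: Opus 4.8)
The goal is to show that each of the assignments $T_i$ (and $T_i^{-1}$) recorded on generators in Theorem~\ref{action_uvqn} respects every defining relation (QQ1)--(QQ6), so that by the universal property of the presentation in Definition~\ref{defqn} it extends to a $\Qv$-algebra endomorphism of $\Uvqn$; since the two lists of formulas visibly satisfy $T_iT_i^{-1}=T_i^{-1}T_i=\mathrm{id}$ on generators, each $T_i$ is then an automorphism, and since any $Y\in\fcB$ is a word in the $T_i^{\pm1}$ it too is an automorphism and hence respects (QQ1)--(QQ6). I would write out the argument for $T_i$ only; the one for $T_i^{-1}$ is identical.

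Before grinding, three reductions cut the work to a short list. First, any relation built solely from the even generators $\genK_j^{\pm1},\genE_j,\genF_j$ is already dealt with: on the subalgebra $\pi_n(\Uvsln)$ the operator $T_i$ equals $\pi_n\circ T_i\circ\pi_n^{-1}$ with $T_i$ an algebra automorphism of $\Uvsln$ by Proposition~\ref{braid_on_gn}, and the remaining even relations --- those containing an individual $\genK_i$, as in (QQ1), (QQ2), and the first relation of (QQ4) --- follow at once from $T_i(\genK_i^{\pm1})=\genK_{s_i(i)}^{\pm1}$ together with the formulas for $T_i(\genE_j),T_i(\genF_j)$, i.e.\ from the computation already carried out for the even part of Theorem~\ref{action_uvqn}. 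Second, the anti-involution $\Omega$ of \eqref{omega} commutes with every $T_i$ (one checks this directly on the generators, using $\Omega(v)=v^{-1}$ and the commutativity of $\genK_i^{\pm1}$ with $\genK_{\ol j}$); hence, once $T_i$ is shown to preserve a relation $P=0$, it automatically preserves $\Omega(P)=0$, and since $\Omega$ interchanges the $\genE$- and $\genF$-type relations while inverting all $v$'s and $\genK$'s, this halves the list. Third, $T_i$ fixes $\genK_j^{\pm1}$ for $j\notin\{i,i+1\}$, fixes $\genK_{\ol j}$ for $j\notin\{i-1,i,i+1\}$, and fixes every $\genE_\bullet,\genF_\bullet$ with index not adjacent to $i$; so any relation whose generators all have indices outside the window $\{i-1,i,i+1,i+2\}$ is sent to itself, leaving only finitely many relations to inspect.

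What remains is to verify, by substituting the images from Theorem~\ref{action_uvqn} and reducing with (QQ1)--(QQ6) and their consequences in Remark~\ref{induction_N}, the instances of (QQ3) and of the odd parts of (QQ1), (QQ4), (QQ5), (QQ6) whose indices lie in that window; those not touching $\genK_{\ol{i+1}}$ follow the exact pattern already displayed in the proof of Lemma~\ref{verify1}. The main obstacle is precisely the relations whose $T_i$-image involves the long expression $T_i(\genK_{\ol{i+1}})=(v-v^{-1})\genK_{\ol{i+1}}\genF_i\genE_i-(v-v^{-1})\genF_i\genE_i\genK_{\ol{i+1}}+\genK_{\ol i}$ --- above all the diagonal instance of (QQ1) asking $T_i(\genK_{\ol{i+1}})^2=\dfrac{\genK_i^2-\genK_i^{-2}}{v^2-v^{-2}}$, the off-diagonal instance $T_i(\genK_{\ol i})\,T_i(\genK_{\ol{i+1}})+T_i(\genK_{\ol{i+1}})\,T_i(\genK_{\ol i})=0$, and the instances of (QQ3)--(QQ4) linking $\genK_{\ol{i+1}}$ to $\genE_i,\genF_i,\genK_i^{\pm1}$. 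Expanding such an image produces a sum of degree-two monomials in $\genK_{\ol{i+1}},\genF_i,\genE_i,\genK_{\ol i}$ which one must reorganize via (QQ2)--(QQ4) and the quadratic relation $\genK_{\ol{i+1}}^2=\dfrac{\genK_{i+1}^2-\genK_{i+1}^{-2}}{v^2-v^{-2}}$ until the cross terms cancel; bookkeeping the powers of $v$ here is the delicate point, and to organize it I would lean on the identities of Remark~\ref{induction_N}, which repeatedly let one trade a $\genK_{\ol{i+1}}$-heavy expression for one built from even generators and a single $\genK_{\ol{i+1}}$.
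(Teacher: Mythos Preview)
Your proposal is correct and follows essentially the same route as the paper: reduce the purely even relations to the known action on $\Uvsln$ via $\pi_n$, observe that only indices in a finite window around $i$ need checking, and then verify the remaining odd relations by direct substitution and reduction using (QQ1)--(QQ6). The paper carries out a handful of these computations explicitly and leaves the rest as ``similar''; your additional use of the anti-involution $\Omega$ to halve the case list is a clean organizational improvement the paper does not invoke here (though it uses $\Omega$ in the same spirit elsewhere), but it does not change the underlying method.
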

\begin{proof}
According to Proposition \ref{braid_on_gn}, 
for any $Y\in \fc{B}$, the elements in $\{ Y({\genE}_j),  Y({\genF}_j),  Y({\genK}_i ^{\pm1})\where  1\le j \le n-1,  \  1 \le i \le n \}$ satisfy all the generating relations in (QQ1)-(QQ6) composed solely of even generators. 
Therefore, we need to verify that the elements in $\{ Y({\genE}_j)$,  $Y({\genF}_j)$,  $Y({\genK}_i ^{\pm1})$, $Y({\genE}_{\ol{j}})$,  $Y({\genF}_{\ol{j}})$, $Y({\genK}_{\ol{i}})\where  1\le j \le n-1,  \  1 \le i \le n \}$ satisfy the remaining relations in (QQ1)-(QQ6). 

Due to the group structure of  $\fc{B}$ and the group action {on $\Uvqn$}, 
we only need to verify that when $Y=T_t (t \in \{i-1, i, i+1, j-1, j ,j+1\})$, the elements in the above set satisfy the remaining generating relations in (QQ1)-(QQ6).
 	
 	For the  relations in (QQ1), when $Y=T_j$, we see that 
\begin{align*}
        Y({\genK}_{i})Y({\genK}_{\ol{j}})
        = {\genK}_{s_j (i)} {\genK}_{\ol{j+1}}
        ={\genK}_{\ol{j+1}} {\genK}_{s_j (i)}
        =Y({\genK}_{\ol{j}})Y({\genK}_{i}) .  
\end{align*}
      As for the case {$Y=T_i$}, we have 
\begin{align*}
 	T_i ({\genK}_{\ol{i}})T_i ( {\genK}_{\ol{i}} )+ T_i ( {\genK}_{\ol{i}}) T_i ( {\genK}_{\ol{i}})
 	&=2 {\genK}_{\ol{i+1}}  {\genK}_{\ol{i+1}} \\
 	&= 2   \frac{{\genK}_{i+1}^2 - {\genK}_{i+1}^{-2}}{{v}^2 - {v}^{-2}}\\
 	&= T_i(2  \frac{{\genK}_{i}^2 - {\genK}_{i}^{-2}}{{v}^2 - {v}^{-2}})\\
       &=2  \frac{{T_i ({\genK}_{i})}^2 - {T_i ({\genK}_{i})}^{-2}}{{v}^2 - {v}^{-2}}.
\end{align*} 
 	Similarly, the other cases with equations in (QQ1) can be proved by direct calculation.
 	 	
 	For the  relations in (QQ2),  {$Y=T_i, T_{i+1}$} leads 
\begin{align*}
 	&T_i(\genK_i)T_i (\genE_{\ol{i}})
  =\genK_{i+1}(-\genK_{\ol{i+1}}\genF_i\genK_i +v\genF_i\genK_{\ol{i+1}}\genK_i)
  =vT_i(\genE_{\ol{i}})T_i (\genK_i), \\
 	&T_{i+1}(\genK_i)T_{i+1} (\genF_{\ol{i}})
  =\genK_i(-\genF_{\ol{i}}\genF_{i+1}+v\genF_{i+1}\genF_{\ol{i}})
  =
  =v^{-1}T_{i+1}(\genF_{\ol{i}})T_{i+1} (\genK_i).
\end{align*} 	
 	In the same way , the other cases with equations in (QQ2) can be checked. 
 	
 	For the  relations in (QQ3), we only give the proof of the second one and the fifth one. The others are analogous to them. If $Y=T_i$, then
\begin{align*}
 	vT_i(\genK_{\ol{i}})T_i(\genE_{i-1})-T_i(\genE_{i-1})T_i(\genK_{\ol{i}})
 	&=v\genK_{\ol{i+1}}(-\genE_i\genE_{i-1}+v^{-1}\genE_{i-1}\genE_i)-(-\genE_i\genE_{i-1}+v^{-1}\genE_{i-1}\genE_i)\genK_{\ol{i+1}}\\
 	&=-v\genK_{\ol{i+1}}\genE_i\genE_{i-1}
 	+\genK_{\ol{i+1}}\genE_{i-1}\genE_i
 	+\genE_i\genE_{i-1}\genK_{\ol{i+1}}
 	-v^{-1}\genE_{i-1}\genE_i\genK_{\ol{i+1}}\\
 	&=(-v\genK_{\ol{i+1}}\genE_i+\genE_i\genK_{\ol{i+1}})\genE_{i-1}+\genE_{i-1}(\genK_{\ol{i+1}}\genE_i-v^{-1}\genE_i\genK_{\ol{i+1}})\\
 	&=-\genK_{i+1}^{-1}(-\genE_{\ol{i}}\genE_{i-1}+v^{-1}\genE_{i-1}\genE_{\ol{i}})\\
 	&=-\genK_{i+1}^{-1}(-\genE_i\genE_{\ol{i-1}}+v^{-1}\genE_{\ol{i-1}}\genE_i)\\
 	&=-T_i(\genK_i^{-1})T_i (\genE_{\ol{i-1}}), 
 	\\ 
 	T_i(\genK_{\ol{i}})T_i(\genE_{\ol{i}})+vT_i(\genE_{\ol{i}})T_i(\genK_{\ol{i}})
 	&=\genK_{\ol{i+1}}(-\genK_{\ol{i+1}}\genF_i\genK_i+v\genF_i\genK_{\ol{i+1}}\genK_i)+v(-\genK_{\ol{i+1}}\genF_i\genK_i+v\genF_i\genK_{\ol{i+1}}\genK_i)\genK_{\ol{i+1}}\\
 	&=-\genK_{\ol{i+1}}^{2}\genF_i\genK_i+v^{2}\genF_i\genK_{\ol{i+1}}^{2}\genK_i\\
 	&=(-\genF_i\genK_i\genK_{i+1}^{-1})\genK_{i+1}^{-1}\\
 	&=T_i(\genE_i)T_i (\genK_i^{-1}).
\end{align*}

 	For the second  relation in (QQ4), we only give the proof for the case $i=j$. And the other cases with relations in (QQ4) are similar to prove and we omit it.
 	For the case $Y=T_i$, we see that 
\begin{align*} T_i(\genE_{\ol{i}}\genF_{\ol{i}}+\genF_{\ol{i}}\genE_{\ol{i}})
&=T_i(\genE_{\ol{i}})T_i(\genF_{\ol{i}})+T_i(\genF_{\ol{i}})T_i(\genE_{\ol{i}})\\
&=(-\genK_{\ol{i+1}}\genF_i\genK_i+v\genF_i\genK_{\ol{i+1}}\genK_i)(-\genK_i^{-1}\genE_i\genK_{\ol{i+1}}+v^{-1}\genK_{\ol{i+1}}\genK_i^{-1}\genE_i)\\&\quad+(-\genK_i^{-1}\genE_i\genK_{\ol{i+1}}+v^{-1}\genK_{\ol{i+1}}\genK_i^{-1}\genE_i)(-\genK_{\ol{i+1}}\genF_i\genK_i+v\genF_i\genK_{\ol{i+1}}\genK_i)\\
 &=\genK_{\ol{i+1}}\genF_i\genE_i\genK_{\ol{i+1}}-v^{-1}\genK_{\ol{i+1}}\genF_i\genK_{\ol{i+1}}\genE_i-v\genF_i\genK_{\ol{i+1}}\genE_i\genK_{\ol{i+1}}+\genF_i\genK_{\ol{i+1}}^{2}\genE_i\\&\quad+\genE_i\genK_{\ol{i+1}}^{2}\genF_i-v\genE_i\genK_{\ol{i+1}}\genF_i\genK_{\ol{i+1}}-v^{-1}\genK_{\ol{i+1}}\genE_i\genK_{\ol{i+1}}\genF_i+\genK_{\ol{i+1}}\genE_i\genF_i\genK_{\ol{i+1}}\\
 &=(v-v^{-1})^{2}\genK_{\ol{i+1}}^{2}\genF_i\genE_i-(v-v^{-1})^{2}\genK_{\ol{i+1}}\genF_i\genE_i\genK_{\ol{i+1}}+\genE_{\ol{i}}\genF_{\ol{i}}+\genF_{\ol{i}}\genE_{\ol{i}}\\
 &=\frac{\genK_i\genK_{i+1}-\genK_i^{-1}\genK_{i+1}^{-1}}{v-v^{-1}}+(v-v^{-1})^{2}\genK_{\ol{i+1}}^{2}\genF_i\genE_i\\
 &\quad-(v-v^{-1})^{2}\genK_{\ol{i+1}}\genF_i\genE_i\genK_{\ol{i+1}}+(v-v^{-1})\genK_{\ol{i+1}}\genK_{\ol{i}}\\
 &=T_i(\frac{\genK_i\genK_{i+1}-\genK_i^{-1}\genK_{i+1}^{-1}}{v-v^{-1}}+(v-v^{-1})\genK_{\ol{i}}\genK_{\ol{i+1}}).
\end{align*}	
	
 	For (QQ5), we only give the proof of the forth from the bottom and the others are analogous to it.  
 	When {$Y=T_i$}, we have
\begin{align*}
   T_i(\genE_{\ol{i}})T_i(\genE_{\ol{i}})
 &=(-\genK_{\ol{i+1}}\genF_i\genK_i +v\genF_i\genK_{\ol{i+1}}\genK_i)(-\genK_{\ol{i+1}}\genF_i\genK_i+v\genF_i\genK_{\ol{i+1}}\genK_i)\\
&=(v-v^{-1})^{2}\genF_i\genK_{\ol{i+1}}^{2}\genF_i\genK_i^{2}
+(1-v^{2})\genF_{\ol{i}}\genK_{\ol{i+1}}\genF_i\genK_{{i+1}}\genK_i^{2}\\&\quad
+(1-v^{2})\genF_i\genK_{\ol{i+1}}\genF_{\ol{i}}\genK_{{i+1}}\genK_i^{2}
+v^{2}\genF_{\ol{i}}^{2}\genK_{{i+1}}^{2}\genK_i^{2}\\
&=v^{2}\frac{v-v^{-1}}{v+v^{-1}}\genF_i^{2}\genK_{{i+1}}^{2}\genK_i^{2}
-v^{-2}\frac{v-v^{-1}}{v+v^{-1}}\genF_i^{2}\genK_{{i+1}}^{-2}\genK_i^{2}
+v^{2}\frac{v-v^{-1}}{v+v^{-1}}\genF_{{i}}^{2}\genK_{{i+1}}^{2}\genK_i^{2}\\&\quad
+v^{2}(1-v^{2})\genF_{{i}}^{2}\genK_{{i+1}}^{2}\genK_i^{2}
-v^{2}(1-v^{2})\frac{v-v^{-1}}{v+v^{-1}}\genF_{{i}}^{2}\genK_{{i+1}}^{2}\genK_i^{2}\\
&=-v^{-2}\frac{ {v} - {v}^{-1} }{{v} + {v}^{-1}}\genF_i^2\genK_i^2\genK_{i+1}^{-2}\\
&=-\frac{ {v} - {v}^{-1} }{{v} + {v}^{-1}}T_i(\genE_i)^{2},
\end{align*}
\begin{align*}
     T_i({\genE}_i)T_ i({\genE}_{i+1})-T_ i({\genE}_{i+1}) T_i (\genE_i)
 	&=(-\genF_i\genK_i\genK_{i+1}^{-1})(-\genE_i\genE_{i+1}+v^{-1}\genE_{i+1}\genE_i)\\
 	&\quad-v(-\genE_i\genE_{i+1}+v^{-1}\genE_{i+1}\genE_i)(-\genF_i\genK_i\genK_{i+1}^{-1})\\
    &=v\genF_i\genE_i\genE_{i+1} \genK_i\genK_{i+1}^{-1}-\genE_{i+1}\genF_i\genE_i\genK_i\genK_{i+1}^{-1}\\&\quad-v\genE_i\genF_i\genE_{i+1}\genK_i\genK_{i+1}^{-1}+\genE_{i+1}\genE_i\genF_i\genK_i\genK_{i+1}^{-1}\\
 	&=-v\frac{\genK_i\genK_{i+1}^{-1}-\genK_i^{-1}\genK_{i+1}}{v-v^{-1}}\genE_{i+1}\genK_i\genK_{i+1}^{-1}
  \\&\qquad
  +\genE_{i+1}\frac{\genK_i\genK_{i+1}^{-1}-\genK_i^{-1}\genK_{i+1}}{v-v^{-1}}\genK_i\genK_{i+1}^{-1}\\
 	&=-v\frac{\genK_i\genK_{i+1}^{-1}-\genK_i^{-1}\genK_{i+1}}{v-v^{-1}}(\genK_{\ol{i+1}}\genE_{\ol{i+1}}+v\genE_{\ol{i+1}}\genK_{\ol{i+1}})\genK_i\\
 	&\qquad+(\genK_{\ol{i+1}}\genE_{\ol{i+1}}+v\genE_{\ol{i+1}}\genK_{\ol{i+1}})\frac{\genK_i\genK_{i+1}^{-1}-\genK_i^{-1}\genK_{i+1}}{v-v^{-1}}\genK_i\\
 	&=(-\genK_{\ol{i+1}}\genF_i\genK_i+v\genF_i\genK_{\ol{i+1}}\genK_i)(-\genE_i\genE_{\ol{i+1}}+v^{-1}\genE_{\ol{i+1}}\genE_i)\\
 	&\quad+v(-\genE_i\genE_{\ol{i+1}}+v^{-1}\genE_{\ol{i+1}}\genE_i)(-\genK_{\ol{i+1}}\genF_i\genK_i+v\genF_i\genK_{\ol{i+1}}\genK_i)\\
 	&=T_i(\genE_{\ol{i}})T_i (\genE_{\ol{i+1}})+vT_i (\genE_{\ol{i+1}})T_i (\genE_{\ol{i}}).
\end{align*}
 	
 	For the  relations in (QQ6), we only detailedly verify the third relation.  Then we have
\begin{align*}
    &{T_i(\genE_i)}^{2} T_i(\genE_{\ol{i+1}}) -(v+v^{-1}) T_i(\genE_i) T_i(\genE_{\ol{i+1}}) T_i(\genE_i)+ T_i(\genE_{\ol{i+1}}) {T_i(\genE_i)}^{2}\\
 	&\qquad=(-\genF_i\genK_i\genK_{i+1}^{-1})(-\genF_i\genK_i\genK_{i+1}^{-1})(-\genE_i\genE_{\ol{i+1}}+v^{-1}\genE_{\ol{i+1}}\genE_i)\\
      &\qquad\qquad-(v+v^{-1})(-\genF_i\genK_i\genK_{i+1}^{-1})(-\genE_i\genE_{\ol{i+1}}+v^{-1}\genE_{\ol{i+1}}\genE_i)(-\genF_i\genK_i\genK_{i+1}^{-1})\\
      &\qquad\qquad +(-\genE_i\genE_{\ol{i+1}} +v^{-1}\genE_{\ol{i+1}}\genE_i) (-\genF_i\genK_i\genK_{i+1}^{-1}) (-\genF_i\genK_i\genK_{i+1}^{-1})\\
 	&\qquad=\genF_i(\genE_i\genF_i-\genF_i\genE_i)\genE_{\ol{i+1}}-v^{-1}\genE_{\ol{i+1}}\genF_i(\genE_i\genF_i-\genF_i\genE_i)\\
 	&\qquad\qquad-v^{-2}(\genE_i\genF_i-\genF_i\genE_i)\genF_i\genE_{\ol{i+1}} +v^{-3}\genE_{\ol{i+1}} (\genE_i\genF_i-\genF_i\genE_i)\genF_i\\	&\qquad=\genF_i\frac{\genK_i\genK_{i+1}^{-1}-\genK_i^{-1}\genK_{i+1}}{v-v^{-1}}\genE_{\ol{i+1}}-v^{-1}\genE_{\ol{i+1}}\genF_i\frac{\genK_i\genK_{i+1}^{-1}-\genK_i^{-1}\genK_{i+1}}{v-v^{-1}}\\
 	&\qquad\qquad-v^{-2}\frac{\genK_i\genK_{i+1}^{-1}-\genK_i^{-1}\genK_{i+1}} {v-v^{-1}}\genF_i\genE_{\ol{i+1}} +v^{-3}\genE_{\ol{i+1}}\frac{\genK_i\genK_{i+1}^{-1}-\genK_i^{-1}\genK_{i+1}}{v-v^{-1}}\genF_i\\
 	&\qquad=0.
\end{align*}	 
 	 	
 In summary , for each $1\leq i\leq n-1$, $T_i$ maintain the relation (QQ1)-(QQ6), hence $T_i$ is a superalgebra endomorphism.
\end{proof}

\begin{rem}\label{note_i_i1}
	 A direct computation  shows that
\begin{align*}
	&T_iT_j(\genE_i)=\genE_j,\quad T_iT_j(\genF_i)=\genF_j\quad \mbox{if} \enspace |i-j|=1, \\
	&T_iT_{i+1}(\genE_{\ol{i}})=\genE_{\ol{i+1}}
	,\quad 
	T_iT_{i+1}(\genF_{\ol{i}})=\genF_{\ol{i+1}}, \\
&  T_{i+1}T_{i}(\genE_{\ol{i+1}})=  
-\genE_{i+1} \genE_{i} \genK_{\ol{i+2}} \genF_{i+1} \genK_{i+1} 
+v^{-1} \genE_i \genE_{i+1} \genK_{\ol{i+2}} \genF_{i+1} \genK_{i+1} 
+v \genE_{i+1} \genE_i \genF_{i+1} \genK_{\ol{i+2}} \genK_{i+1}   \\
&\qquad \qquad \qquad- \genE_i \genE_{i+1} \genF_{i+1} \genK_{\ol{i+2}} \genK_{i+1}
+ v^{-1} \genK_{\ol{i+2}} \genF_{i+1} \genK_{i+1} \genE_{i+1} \genE_i 
-v^{-2} \genK_{\ol{i+2}} \genF_{i+1} \genK_{i+1} \genE_i \genE_{i+1}\\
&\qquad \qquad \qquad
-\genF_{i+1} \genK_{\ol{i+2}} \genK_{i+1}  \genE_{i+1} \genE_i 
+v^{-1} \genF_{i+1} \genK_{\ol{i+2}} \genK_{i+1}   \genE_i \genE_{i+1},\\ 
&	 T_{i+1}T_{i}(\genF_{\ol{i+1}}) 
=-\genK_{i+1}^{-1} \genE_{i+1} \genK_{\ol{i+2}}\genF_i \genF_{i+1}
+v^{-1}\genK_{\ol{i+2}} \genK_{i+1}^{-1} \genE_{i+1}\genF_i \genF_{i+1}
+v\genK_{i+1}^{-1} \genE_{i+1} \genK_{\ol{i+2}}\genF_{i+1}\genF_i\\
&\qquad \qquad \qquad
-\genK_{\ol{i+2}} \genK_{i+1}^{-1} \genE_{i+1}\genF_{i+1}\genF_i
+v\genF_i \genF_{i+1}\genK_{i+1}^{-1} \genE_{i+1} \genK_{\ol{i+2}}
-v^2 \genF_{i+1}\genF_i\genK_{i+1}^{-1} \genE_{i+1} \genK_{\ol{i+2}}\\
&\qquad \qquad \qquad
-\genF_i \genF_{i+1}\genK_{\ol{i+2}} \genK_{i+1}^{-1} \genE_{i+1}
+v \genF_{i+1}\genF_i\genK_{\ol{i+2}} \genK_{i+1}^{-1} \genE_{i+1}.
\end{align*}	
\end{rem}

\textbf{Proof of the inverse of  $T_i$:}
%\begin{proof}
Obviously, Remark \ref{note_i_i1} shows that  {$T_{i}  \in \fc{B} $} {$(i = 1, \cdots, n-1)$} are automorphisms.
 Also, we have 
\begin{align*}
 &T_{i}^{-1} (\genE_j)=T_{j} (\genE_i)=-\genE_j \genE_i +v^{-1} \genE_i \genE_j, \\
 &T_{i}^{-1} (\genF_j)=T_{j} (\genF_i)=-\genF_i \genF_j +v \genF_j \genF_i \quad \mbox{for } |i-j|=1 ;\\
 &T_{i}^{-1} (\genE_{\ol{i+1}})=T_{i+1} (\genE_{\ol{i}})=-\genE_{i+1}\genE_{\ol{i}}+v^{-1}\genE_{\ol{i}}\genE_{i+1},\\
 &T_{i}^{-1} (\genF_{\ol{i+1}})=T_{i+1} (\genF_{\ol{i}})=-\genF_{\ol{i}}\genF_{i+1} +v \genF_{i+1}\genF_{\ol{i}}.
\end{align*}
And by the formulas in Remark \ref{induction_N}, we see that
\begin{align*}
	T_{i}^{-1}( {\genK}_{\ol{i}}) 
	& = T_{i}^{-1} ( T_{i} ( {\genK}_{\ol{i+1}} ) - (v-v^{-1}) {\genK}_{\ol{i+1}} \genF_i \genE_i  +  (v-v^{-1}) \genF_i \genE_i {\genK}_{\ol{i+1}} ) \\
	& = {\genK}_{\ol{i+1}} -(v-v^{-1}) {\genK}_{\ol{i}} ( - \genE_i \genK_i \genK_{i+1}^{-1} ) ( - \genK_i^{-1} \genK_{i+1} \genF_i)\\
	& \quad
	+ (v-v^{-1}) ( - \genE_i \genK_i \genK_{i+1}^{-1} ) ( - \genK_i^{-1} \genK_{i+1} \genF_i ) {\genK}_{\ol{i}} \\
	&= (v-v^{-1}) \genE_i \genF_i {\genK}_{\ol{i} } - (v-v^{-1}) {\genK}_{\ol{i}} \genE_i \genF_i + {\genK}_{\ol{i+1}},
	\\
	T_{i}^{-1} ( {\genE}_{\ol{i}} )
	& = T_{i}^{-1} ( \genK_{i+1} \genE_{i} {\genK}_{\ol{i+1}} -v\genK_{i+1} {\genK}_{\ol{i+1}} \genE_{i} )\\
	&= \genK_{i} ( - \genK_{i+1} \genK_i^{-1} \genF_i ){\genK}_{\ol{i}}-v\genK_{i} {\genK}_{\ol{i}} ( -  \genK_{i+1} \genK_i^{-1} \genF_i )  \\
	&= -  \genK_{i+1} \genF_i {\genK}_{\ol{i}} + v \genK_{i+1} {\genK}_{\ol{i}} \genF_i,
	\\
	T_{i+1}^{-1} ( {\genE}_{\ol{i}})
	&=T_{i+1}^{-1} ( {\genK}_{\ol{i}} \genE_i \genK_i - v \genE_i {\genK}_{\ol{i}} \genK_i )\\
	&={\genK}_{\ol{i}} ( - \genE_i \genE_{i+1} + v^{-1} \genE_{i+1} \genE_i ) \genK_i \\
	&\quad
	- v ( - \genE_i \genE_{i+1} + v^{-1} \genE_{i+1} \genE_i ) {\genK}_{\ol{i}} \genK_i \\
	&= - ( {\genK}_{\ol{i}} \genE_i - v \genE_i {\genK}_{\ol{i}}) \genE_{i+1} \genK_i
	+ v^{-1} \genE_{i+1}( {\genK}_{\ol{i}} \genE_i-  v  \genE_i  {\genK}_{\ol{i}} ) \genK_i  \\
	& = - ( {\genE}_{\ol{i}} \genK_i^{-1} ) \genE_{i+1} \genK_i  + v^{-1} \genE_{i+1} ( {\genE}_{\ol{i}}  \genK_i^{-1}) \genK_i  \\
	&= - {\genE}_{\ol{i}} \genE_{i+1} + v^{-1} \genE_{i+1} {\genE}_{\ol{i}}.
\end{align*}
 Therefore together with the anti-involution $\Omega$ given in \eqref{omega},  Theorem \ref{action_uvqn} can be proved completely.

\section{The root vectors of $\Uvqn$}\label{root_1}

Similar to \cite[Proposition 9.1.2]{CP}, to construct a PBW-type basis of quantum queer superalgebra $\Uvqn$, we need to define analogues of root vectors associated with the roots of $\qn$. 

Recall the root lattice  {$\mathrm{Q}  = \bigoplus\limits_{j=1}^{n-1} \ZZ \alpha_{j} $} 
and {$\alpha_{j} = \bs{\ep}_j - \bs{\ep}_{j+1} $}.
Assume $\gamma = \sum \limits_{i=1}^{n} k_i \bs{\ep}_i \in \mathrm{Q}$,
 we set $\genK_{\gamma} =\prod\limits_{i=1}^{n}\genK_i^{k_i}$. Then we have the following in $\Uvqn$.
 
\begin{prop}\label{prop5}
For any ${\gamma} = \sum \limits_{i=1}^{n} k_i \bs{\ep}_i  \in \mathrm{Q}$,
and  {$w \in \fS_n$},
we have $T_w(\genK_{\gamma})=\genK_{w({\gamma})}$. 
\end{prop}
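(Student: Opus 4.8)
The plan is to reduce the statement to the case of a single simple reflection and then induct on the length $l(w)$. The well-definedness of $T_w$ from \eqref{br_relation1} and the factorization property \eqref{br_relation2} supply exactly the structural tools for this reduction, so the only real content is a short computation in the case $w=s_j$.

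\textbf{The base case.} First I would prove that $T_j(\genK_\mu)=\genK_{s_j(\mu)}$ for every $\mu=\sum_{i=1}^n m_i\bs{\ep}_i$ with $m_i\in\ZZ$ and every $1\le j\le n-1$. Here $\genK_\mu=\prod_{i=1}^n\genK_i^{m_i}$ is unambiguous because the $\bs{\ep}_i$ are linearly independent, and by relation (QQ1) the $\genK_i$ commute, so the order of the factors is immaterial. Since $T_j$ is a $\Qv$-algebra automorphism of $\Uvqn$ and $T_j(\genK_i^{\pm1})=\genK_{s_j(i)}^{\pm1}$ by Theorem \ref{action_uvqn}, I get
\[
T_j(\genK_\mu)=\prod_{i=1}^n T_j(\genK_i)^{m_i}=\prod_{i=1}^n\genK_{s_j(i)}^{m_i}=\prod_{i=1}^n\genK_i^{m_{s_j(i)}},
\]
the last step being a reindexing of the commuting product by the involution $s_j$. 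On the other hand, \eqref{action_dual_ep} gives $s_j(\mu)=\sum_i m_i\bs{\ep}_{s_j(i)}=\sum_i m_{s_j(i)}\bs{\ep}_i$, so $\genK_{s_j(\mu)}=\prod_i\genK_i^{m_{s_j(i)}}$, which is the same expression. Hence $T_j(\genK_\mu)=\genK_{s_j(\mu)}$.

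\textbf{The induction.} I would then argue by induction on $l(w)$. The case $l(w)=0$ is trivial since $T_1=\mathrm{id}$. For $l(w)\ge1$, pick a reduced expression $w=s_{j_1}s_{j_2}\cdots s_{j_t}$ and set $w'=s_{j_2}\cdots s_{j_t}$; then $l(s_{j_1})+l(w')=l(w)$, so $T_w=T_{j_1}T_{w'}$ by \eqref{br_relation2}. Since $\mathrm{Q}$ is $\fS_{n}$-stable (each $s_i$ sends the simple roots into $\mathrm{Q}$ by \eqref{action_dual}), we have $w'(\gamma)\in\mathrm{Q}$, and the induction hypothesis gives $T_{w'}(\genK_\gamma)=\genK_{w'(\gamma)}$. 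Applying $T_{j_1}$ and invoking the base case with $\mu=w'(\gamma)$ yields $T_w(\genK_\gamma)=T_{j_1}(\genK_{w'(\gamma)})=\genK_{s_{j_1}(w'(\gamma))}=\genK_{w(\gamma)}$, which closes the induction.

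I do not expect a genuine obstacle: the argument is almost entirely bookkeeping. The one point needing a moment's care is the reindexing of the permuted product of the $\genK_i$ in the base case, which relies both on the commutativity from (QQ1) and on $s_j$ being its own inverse; everything else is immediate from Theorem \ref{action_uvqn} together with the standard facts \eqref{br_relation1}--\eqref{br_relation2} about $T_w$.
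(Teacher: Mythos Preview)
Your proposal is correct and takes essentially the same approach as the paper: both arguments rest on the single-generator formula $T_j(\genK_i)=\genK_{s_j(i)}$ from Theorem~\ref{action_uvqn} together with multiplicativity of the algebra automorphism and the action formula \eqref{action_dual_ep}. The only cosmetic difference is that the paper iterates $T_{j_1}\cdots T_{j_r}(\genK_h)=\genK_{w(h)}$ directly along a reduced word and then passes to the product $\prod_i\genK_i^{k_i}$, whereas you first handle the full product for a single $T_j$ and then induct on $l(w)$; the content is identical.
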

\begin{proof}
Assume  {$w = s_{j_1}  s_{j_2} \cdots  s_{j_r}  $} be a reduced expression of {$w$}.
By  Theorem \ref{action_uvqn},  for any {$h \in [1, n]$},
 we have {$T_{j}( \genK_{h} ) = \genK_{s_{j}(h)}$},
and  {$T_{w}( \genK_{h} ) =  T_{j_1}  T_{j_2} \cdots  T_{j_r}  ( \genK_{h} ) =  \genK_{w(h)}$}.
As a consquence, 
\begin{align*}
&T_w(\genK_{\gamma})
=T_w(\prod_{i=1}^{n}\genK_i^{k_i})
=\prod_{i=1}^{n}T_w(\genK_i^{k_i})
=\prod_{i=1}^{n}{(T_w(\genK_i))}^{k_i}
=\prod_{i=1}^{n} \genK_{w(i)}^{k_i}
.
\end{align*}
On the other hand,  \eqref{action_dual_ep} implies
\begin{align*}
&w({\gamma}) 
= w( \sum \limits_{i=1}^{n} k_i \bs{\ep}_i ) 
= \sum \limits_{i=1}^{n} k_i w( \bs{\ep}_i )
= \sum \limits_{i=1}^{n} k_i   \bs{\ep}_{w(i )},
\end{align*}
which means
\begin{align*}
&\genK_{w({\gamma})}=\prod_{i=1}^{n}\genK_{w(i)}^{k_i} = T_w(\genK_{\gamma}) .
\end{align*}
\end{proof}

Similar to \cite[Definition 8.1.4]{CP}, we give the following definition.

\begin{defn}\label{qrv}
For any expression of $w_0 = w^{\bs{\gamma}} = s_{i_1}s_{i_2} \cdots s_{i_N}$  and assume  {$1 \le t \le N$}, 
we define some special elements in $\Uvqn$ as 
\begin{equation}\label{qroot}
\begin{aligned}
	{\genE}^{\bs{\gamma}}_{t}=T_{i_1}T_{i_2}\cdots T_{{i_t}-1}(\genE_{i_t}),\quad
	{\genF}^{\bs{\gamma}}_{t}=T_{i_1}T_{i_2}\cdots T_{{i_t}-1}(\genF_{i_t}), \\	
     	\ol{\genE}^{\bs{\gamma}}_{t}=T_{i_1}T_{i_2}\cdots T_{{i_t}-1}(\genE_{\ol{i_t}}),\quad
     	\ol{\genF}^{\bs{\gamma}}_{t}=T_{i_1}T_{i_2}\cdots T_{{i_t}-1}(\genF_{\ol{i_t}}).
\end{aligned}
\end{equation}
The elements {$ {\genE}^{\bs{\gamma}}_{t}$}'s (resp.  ${\genF}^{\bs{\gamma}}_{t}$'s) are called even positive (resp. negative) root vectors,
while 
the elements {$ \ol{\genE}^{\bs{\gamma}}_{t}$}'s (resp.  $\ol{\genF}^{\bs{\gamma}}_{t}$'s) are called odd positive (resp. negative) root vectors,
\end{defn}

 We denote the set of  positive and negative root vectors by
\begin{align*}
\Psi^+_{\bs{\gamma}} = \{ {\genXE}^{\bs{\gamma}}_{t}, {\ol{\genE}}^{\bs{\gamma}}_{t} \where 1 \le t \le N \},\qquad 
\Psi^-_{\bs{\gamma}} =  \{  {\genF}^{\bs{\gamma}}_{t}, {\ol{\genF}}^{\bs{\gamma}}_{t}  \where 1 \le t \le N \} .
\end{align*}
For any ${\beta}_{\bs{\gamma},t }=\alpha_{i,j} \in \Phi^+ \cup \Phi^-$  ({$1\le i <  j \le n$}),  we can denote
\begin{align*}
  &{\genXE}^{\bs{\gamma}}_{i,j} = {\genXE}^{\bs{\gamma}}_{t},  \quad
 \ol{{\genE}}^{\bs{\gamma}}_{i,j} = \ol{\genXE}^{\bs{\gamma}}_{t},
\\
& {\genXF}^{\bs{\gamma}}_{j,i} = {\genF}^{\bs{\gamma}}_{t},\quad
\ol{{\genXF}}^{\bs{\gamma}}_{j,i} =  	\ol{\genF}^{\bs{\gamma}}_{t}.
\end{align*}
%Notice that $\Psi^+ $ and $\Psi^- $ depend on the choice of the reduced expression of $w_0$. See \cite[Example 8.1.5]{CP}  for more details.

\begin{exam}
Fix $n=3$ and we will find the root vectors for ${\boldsymbol U}_{\!{v}}(\mathfrak{\lcase{q}}_{3})$ 
using two different expressions of the longest element  $w_0$ of {$\fS_3$}.
\begin{enumerate}
\item 
If we choose {$\bs{\gamma}$} = (1,2,1) and  $w_0 = w^{\bs{\gamma}} = s_1 s_2 s_1$,
the positive roots are 
\begin{align*}
\beta_{\bs{\gamma}, 1} = \alpha_1 = \alpha_{1,2}; \quad
\beta_{\bs{\gamma}, 2} = s_1(\alpha_2) = \alpha_{1,3}; \quad
\beta_{\bs{\gamma}, 3} = s_1 s_2 (\alpha_1) = \alpha_{2,3},
\end{align*} 
while the corresponding  positive root vectors are
\begin{align*}
&\genXE^{\bs{\gamma}}_{1,2} = \genE_1 ,\quad
\genXE^{\bs{\gamma}}_{1,3} = T_1 (\genE_2) = -\genE_1 \genE_2 +v^{-1} \genE_2 \genE_1 , \quad 
\genXE^{\bs{\gamma}}_{2,3} = T_1 T_2 (\genE_1) = \genE_2 ,\\
&
\ol{\genXE}^{\bs{\gamma}}_{1,2} = \genE_{\ol{1}} ,\quad 
\ol{\genXE}^{\bs{\gamma}}_{1,3} =T_1 (\genE_{\ol{2}}) = -\genE_1 \genE_{\ol{2}} +v^{-1} \genE_{\ol{2}} \genE_1 , \quad
\ol{\genXE}^{\bs{\gamma}}_{2,3} = T_1 T_2 (\genE_{\ol{1}}) = \genE_{\ol{2}}.
\end{align*} 
\item 
If we choose  {$\bs{\gamma}$} = (2,1,2) and  $w_0 = w^{\bs{\gamma}} = s_2 s_1 s_2$, then
\begin{align*}
\beta_{\bs{\gamma}, 1}  = \alpha_2 = \alpha_{2,3}; \quad
\beta_{\bs{\gamma}, 2} = s_2(\alpha_1) = \alpha_{1,3}; \quad
\beta_{\bs{\gamma}, 3} = s_2 s_1 (\alpha_2) = \alpha_{1,2},
\end{align*} 
and the corresponding positive root vectors are
\begin{align*}
&\genXE^{\bs{\gamma}}_{2,3} =\genE_2 ,\quad 
\genXE^{\bs{\gamma}}_{1,3} =T_2 (\genE_1) = -\genE_2 \genE_1 +v^{-1} \genE_1 \genE_2 , \quad 
\genXE^{\bs{\gamma}}_{1,2} =T_2 T_1 (\genE_2) = \genE_1 ,\\
&\ol{\genXE}^{\bs{\gamma}}_{2,3} = \genE_{\ol{2}} ,\quad 
\ol{\genXE}^{\bs{\gamma}}_{1,3} = T_2 (\genE_{\ol{1}}) = -\genE_2 \genE_{\ol{1}} +v^{-1} \genE_{\ol{1}} \genE_2 , \\
& \ol{\genXE}^{\bs{\gamma}}_{1,2} = T_2 T_1 (\genE_{\ol{2}}) =  
-\genE_2 \genE_1 \genK_{\ol{3}} \genF_2 \genK_2 
+v^{-1} \genE_1 \genE_2 \genK_{\ol{3}} \genF_2 \genK_2 
+v \genE_2 \genE_1 \genF_2 \genK_{\ol{3}} \genK_2 
- \genE_1 \genE_2 \genF_2 \genK_{\ol{3}} \genK_2 \\
&\qquad \qquad \quad+ v^{-1} \genK_{\ol{3}} \genF_2 \genK_2 \genE_2 \genE_1 
-v^{-2} \genK_{\ol{3}} \genF_2 \genK_2 \genE_1 \genE_2
-\genF_2 \genK_{\ol{3}} \genK_2  \genE_2 \genE_1 
+v^{-1} \genF_2 \genK_{\ol{3}} \genK_2   \genE_1 \genE_2 .
\end{align*}
\end{enumerate}
\end{exam}

 With the fixed expression $w^{\bs{\sigma}}$, similar calculation to Corollary \ref{classical_2_0},  then we have the following.

\begin{cor}\label{cor_rv}
 Assume that $1\leq i< n$, $i+1<  j< n$. Then we have
\begin{align*}
     &{\genXE}^{\bs{\sigma}}_{i,i+1}=\genE_i,\quad 
      {\overline {\genXE}^{\bs{\sigma}}_{i,i+1}}=\genE_{\overline{i}},\quad
      {\genXF}^{\bs{\sigma}}_{i+1,i}=\genF_i,\quad 
      {\overline {\genXF}^{\bs{\sigma}}_{i+1,i }}=\genF_{\overline{i}},\\
     	&{\genXE}^{\bs{\sigma}}_{i,j}
     	=-{\genXE}^{\bs{\sigma}}_{i,j-1}{\genXE}^{\bs{\sigma}}_{j-1,j}+v^{-1}{\genXE}^{\bs{\sigma}}_{j-1,j}{\genXE}^{\bs{\sigma}}_{i,j-1} 
     	=[ 
     	   \cdots
     	   [ 
     	     [\genE_i, -\genE_{i+1}]_{v^{-1}}, -\genE_{i+2} 
     	              ]_{v^{-1}}, 
     	               \cdots ,-\genE_{j-1} ]_{v^{-1}},\\
     	&{\ol{\genE}}^{\bs{\sigma}}_{i,j}
     	=-{\genXE}^{\bs{\sigma}}_{i,j-1}{\ol {\genXE}^{\bs{\sigma}}_{j-1,j}}+v^{-1}{\ol {\genXE}^{\bs{\sigma}}_{j-1,j}}{\genXE}^{\bs{\sigma}}_{i,j-1}
     	 =[ 
     	 \cdots
     	    [ 
     	          [\genE_i, -\genE_{i+1}]_{v^{-1}}, -\genE_{i+2} 
     	                    ]_{v^{-1}}, 
     	 \cdots ,-\genE_{\ol{j-1}} ]_{v^{-1}},\\
     	 &{\genXF}^{\bs{\sigma}}_{j,i}
     	 =-{\genXF}^{\bs{\sigma}}_{j,j-1}{\genXF}^{\bs{\sigma}}_{j-1,i}+v{\genXF}^{\bs{\sigma}}_{j-1,i}{\genXF}^{\bs{\sigma}}_{j,j-1}
     	 =[-\genF_{j-1}, 
     	       [-\genF_{j-2}, 
     	          \cdots 
     	           [-\genF_{i+1}, \genF_i]_v 
     	                 ]_v 
     	                   \cdots
     	                                      ]_v,\\
     	  &{\genXF}^{\bs{\sigma}}_{j,i}
     	  =-{\ol {\genXF}^{\bs{\sigma}}_{j,j-1}}{\genXF}^{\bs{\sigma}}_{j-1,i}+v{\genXF}^{\bs{\sigma}}_{j-1,i}{\ol {\genXF}^{\bs{\sigma}}_{j,j-1}}
     	 =[-\genF_{\ol{j-1}}, 
     	      [-\genF_{j-2}, 
     	        \cdots 
     	          [-\genF_{i+1}, \genF_i]_v 
     	                    ]_v 
     	                       \cdots
     	                             ]_v,
\end{align*}    
where  $[a,b]_{x}=ab - xba$,  for any $a,b \in \Uvqn , x \in \Qv$.	
\end{cor}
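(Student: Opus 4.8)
The plan is to imitate the proof of Corollary~\ref{classical_2_0} verbatim, replacing the classical computation with its quantum counterpart; the braid relations \eqref{T_relation} satisfied by the $T_i$ on $\Uvqn$ (Theorem~\ref{action_uvqn}, together with Lemma~\ref{verify1}) mean that the well-definedness arguments behind $T_w$ carry over unchanged, so I may freely manipulate products $T_{i_1}\cdots T_{i_t}$ using \eqref{br_relation1} and \eqref{br_relation2}. First I fix the reduced expression $w^{\bs{\sigma}}$ of \eqref{w_0_fix} and apply Lemma~\ref{lemroot}, which tells us exactly which initial segment of $\bs{\sigma}$ produces the root $\alpha_{i,j}$: for $j=i+1$ the relevant operator is $(T_1)\refdot(T_2T_1)\refdot\cdots\refdot(T_iT_{i-1}\cdots T_2)$ applied to $\genE_1$ (or $\genE_{\ol 1}$, $\genF_1$, $\genF_{\ol 1}$), and for $j>i+1$ it is $(T_1)\refdot(T_2T_1)\refdot\cdots\refdot(T_{j-1}\cdots T_{j-i+1})$ applied to $\genE_{j-i}$, etc.

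For the base case $j=i+1$, I would collapse the telescoping product exactly as in the classical proof: using $T_1T_2(\genE_1)=\genE_2$, $T_2T_3(\genE_2)=\genE_3$, and in general $T_{k-1}T_k(\genE_{k-1})=\genE_k$ (these are instances of Remark~\ref{note_i_i1}, $T_iT_j(\genE_i)=\genE_j$ for $|i-j|=1$, and likewise for $\genE_{\ol{\,\cdot}\,}$, $\genF$, $\genF_{\ol{\,\cdot}\,}$ by Lemma~\ref{verify1} / the anti-involution $\Omega$), one peels off the rightmost factors of each bracketed block successively until only $\genE_i$ remains. This gives ${\genXE}^{\bs{\sigma}}_{i,i+1}=\genE_i$ and the three companions. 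Then, again copying Corollary~\ref{classical_2_0}, for $j>i+1$ the same telescoping reduces ${\genXE}^{\bs{\sigma}}_{i,j}=T_iT_{i+1}\cdots T_{j-2}(\genE_{j-1})$, after which I pass from $j$ to $j+1$:
\begin{align*}
{\genXE}^{\bs{\sigma}}_{i,j+1}
&=(T_iT_{i+1}\cdots T_{j-2})\refdot T_{j-1}(\genE_{j})
=(T_iT_{i+1}\cdots T_{j-2})(-\genE_{j-1}\genE_{j}+v^{-1}\genE_{j}\genE_{j-1})\\
&=-{\genXE}^{\bs{\sigma}}_{i,j}\genE_{j}+v^{-1}\genE_{j}{\genXE}^{\bs{\sigma}}_{i,j},
\end{align*}
using that $T_{j-1}(\genE_j)=-\genE_{j-1}\genE_j+v^{-1}\genE_j\genE_{j-1}$ from Theorem~\ref{action_uvqn} and that $(T_iT_{i+1}\cdots T_{j-2})(\genE_{j-1})={\genXE}^{\bs{\sigma}}_{i,j}$, $(T_iT_{i+1}\cdots T_{j-2})(\genE_j)=\genE_j$ (the latter because $|i'-j|>1$ for all $i'\le j-2$ except $j-2$, and $T_{j-2}(\genE_j)=\genE_j$ too since $|{(j-2)}-j|>1$ fails — here one checks $T_{j-2}(\genE_j)=\genE_j$ requires $|j-2-j|=2>1$, so indeed each $T_{i'}$ fixes $\genE_j$). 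Unwinding the recursion yields the iterated-bracket formula with all inner brackets $[\,\cdot\,,\,\cdot\,]_{v^{-1}}$.

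The $\genF$-side formulas follow by applying the anti-involution $\Omega$ of \eqref{omega}: $\Omega$ sends $\genE_j\mapsto\genF_j$, $\genE_{\ol j}\mapsto\genF_{\ol j}$, $v\mapsto v^{-1}$, and intertwines the braid action in the way already exploited in the proof of Lemma~\ref{verify1}, turning $[\,\cdot\,,\,\cdot\,]_{v^{-1}}$-brackets into $[\,\cdot\,,\,\cdot\,]_{v}$-brackets and reversing the order of the bracketing, which matches the stated nested form $[-\genF_{j-1},[-\genF_{j-2},\cdots[-\genF_{i+1},\genF_i]_v]_v\cdots]_v$. The only genuinely new wrinkle compared with the classical case is bookkeeping the powers of $v$ that appear when $T_i(\genE_j)=-\genE_i\genE_j+v^{-1}\genE_j\genE_i$ rather than the plain commutator; I expect the main (mild) obstacle to be tracking these $v^{\pm1}$ factors consistently through the telescoping and making sure the $\ol{\genE}$, $\ol{\genF}$ cases — where one of the bracket entries is odd and the relevant relation $T_i(\genE_{\ol j})=-\genE_i\genE_{\ol j}+v^{-1}\genE_{\ol j}\genE_i$ still has a single $v^{-1}$ — produce exactly the asserted mixed brackets, and this is precisely where Remark~\ref{note_i_i1} and the explicit even/odd $T_i$-formulas of Theorem~\ref{action_uvqn} are invoked.
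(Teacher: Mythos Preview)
Your proposal is correct and follows exactly the approach the paper intends: the paper's own ``proof'' of Corollary~\ref{cor_rv} consists of the single sentence ``similar calculation to Corollary~\ref{classical_2_0}'', and your plan is precisely that calculation with the quantum $v^{\pm 1}$ bookkeeping inserted. The one expository wobble is your parenthetical about $T_{j-2}(\genE_j)$: since $|(j-2)-j|=2>1$ there is no exception to worry about, and every $T_{i'}$ with $i\le i'\le j-2$ fixes $\genE_j$ (and $\genE_{\ol j}$, $\genF_j$, $\genF_{\ol j}$) outright.
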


Similar to the case for $\Uqn$ in Section \ref{root_2}, 
 %And the root vectors corresponding to the reduced expression $w^{\bs{\sigma}}$  
%in both classical and quantum cases have been respectively provided by Lemma \ref{lemroot} and Proposition \ref{prop_rv}. 
%Therefore, for any two reduced expressions $w^{\bs{\varsigma}}$ and $w^{\bs{\tau}}$ of $w_0$, 
%we only need to inductively consider the following two cases regarding the relationships between the root vectors corresponding to $w^{\bs{\varsigma}}$ and $w^{\bs{\tau}}$.
assume $w^{\bs{\varsigma}}$ and $w^{\bs{\tau}}$ are two reduced expressions of $w_0$,
and $w^{\bs{\tau}}$  could be obtained by applying one of the equations in \eqref{sys}, 
we will  find the relations between $\Psi^+_{\bs{\varsigma}} $ and $\Psi^+_{\bs{\tau}}$.

\begin{enumerate}
\item[Case 1:]
Assume that $w^{\bs{\varsigma}} =w{'}s_{{i}} s_{{j}}  w{''}$, $w^{\bs{\tau}} =w{'}s_{{j}} s_{{i}}  w{''}$, 
   with $l(w{'}) = h$ and {$ | {i}  -  {j} | > 1 $}. 
In this case, we have 
\begin{align*}
   &{\genE}_{{t}}^{\bs{\varsigma}}
   ={\genE}_{{t}}^{\bs{\tau}}, \quad 
   \ol{\genE}_{{t}}^{{\bs{\varsigma}}}
   =\ol{\genE}_{{t}}^{\bs{\tau}}, \quad
   {\genF}_{{t}}^{\bs{\varsigma}}
   ={\genF}_{{t}}^{\bs{\tau}}, \quad 
   \ol{\genF}_{{t}}^{\bs{\varsigma}}
   =\ol{\genF}_{{t}}^{\bs{\tau}} \enspace 
   \mbox{ for all  }\enspace t\ne  h+1, h+2 ;\\
   &{\genE}_{{h+1}}^{\bs{\varsigma}}
   =T_{w{'}}(\genE_{{i}})
   =T_{w{'}} T_{j} (\genE_{{i}})
   ={\genE}_{{h+2}}^{\bs{\tau}}, \quad
   {\genE}_{{h+2}}^{\bs{\varsigma}}
   =T_{w{'}}T_{{i}}(\genE_{j})
   =T_{w{'}}(\genE_{j})
   ={\genE}_{{h+1}}^{\bs{\tau}},\\
   &\ol{\genE}_{{h+1} }^{\bs{\varsigma}}
   =T_{w{'}}(\genE_{\ol{{{i}}}})
   =T_{w{'}} T_{j} (\genE_{\ol{i}})
   =\ol{\genE}_{{h+2}}^{\bs{\tau}}\quad
   \ol{\genE}_{{h+2} }^{\bs{\varsigma}}
   =T_{w{'}}T_{{i}}(\genE_{\ol{j}})
   =T_{w{'}}(\genE_{\ol{j}})
   =\ol{\genE}_{{h+1}}^{\bs{\tau}};\\   
   &{\genF}_{{h+1} }^{\bs{\varsigma}}
   =T_{w{'}}(\genF_{{i}})
   =T_{w{'}} T_{j} (\genF_{{i}})
   ={\genF}_{{h+2}}^{\bs{\tau}}, \quad
   {\genF}_{{h+2} }^{\bs{\varsigma}}
   =T_{w{'}}T_{{i}}(\genF_{j})
   =T_{w{'}}(\genF_{j})
   ={\genF}_{{h+1}}^{\bs{\tau}}, \\
   &\ol{\genF}_{{h+1} }^{\bs{\varsigma}}
   =T_{w{'}}(\genF_{\ol{i}})
   =T_{w{'}} T_{j} (\genF_{\ol{i}})
   =\ol{\genF}_{{h+2}}^{\bs{\tau}}, \quad
   \ol{\genF}_{{h+2} }^{\bs{\varsigma}}
   =T_{w{'}}T_{{i}}(\genF_{\ol{j}})
   =T_{w{'}}(\genF_{\ol{j}})
   =\ol{\genF}_{{h+1}}^{\bs{\tau}}.  
\end{align*}
   
     % Case 2 : assume that $w^{\bs{\varsigma}} =w{'}s_{{i_{k}}} s_{{i_{k-1}}} s_{{i_{k}}} w{''}$ and $w^{\bs{\tau}} =w{'} s_{{i}} s_{{j}} s_{{i}}w{''}$, {$ | {i}  -  {j} | = 1 $}.\\

\item[Case 2:]
Assume that $w^{\bs{\varsigma}} =w{'}s_{{i}} s_{{j}} s_{{i}} w{''}$, $w^{\bs{\tau}} =w{'} s_{{j}} s_{{i}} s_{{j}}w{''}$, 
  with $l(w{'}) = h$ and {$ | {i}  -  {j} | = 1 $}.
With the same discussion as Section \ref{root_2}
and referring to Remark \ref{note_i_i1},
the relationships between the root vectors are 
\begin{align*}
   &{\genE}_{{t} }^{\bs{\varsigma}}
   ={\genE}_{{t} }^{\bs{\tau}}, \quad 
   \ol{\genE}_{{t} }^{\bs{\varsigma}}
   =\ol{\genE}_{{t} }^{\bs{\tau}}, \quad
   {\genF}_{{t} }^{\bs{\varsigma}}
   ={\genF}_{{t}}^{\bs{\tau}}, \quad 
   \ol{\genF}_{{t} }^{\bs{\varsigma}}
   =\ol{\genF}_{{t}}^{\bs{\tau}} \enspace 
   \mbox{if}\enspace t\ne  h+1,  h+2,  h+3; \\
  &{\genE}_{{h+1} }^{\bs{\varsigma}}
  =T_{w{'}}(\genE_{i})
  =T_{w{'}}T_{j}T_{i}(\genE_{j})
  ={\genE}_{{h+3} }^{\bs{\tau}},\quad  
  {\ol{\genE}}_{{h+1}}^{\bs{\varsigma}}
   =T_{w{'}}(\genE_{\ol{i}}),
  \\  
  &{\genF}_{{h+1} }^{\bs{\varsigma}}
  =T_{w{'}}(\genF_{i})
  =T_{w{'}}T_{j}T_{i}(\genF_{j})
  ={\genF}_{{h+3} }^{\bs{\tau}},\quad  
  {\ol{\genF}}_{{h+1}}^{\bs{\varsigma}}
   =T_{w{'}}(\genF_{\ol{i}});\\
 &{\genE}_{{h+2} }^{\bs{\varsigma}}
 =T_{w{'}}T_{i}(\genE_{j})
 =T_{w{'}}(-\genE_{i}\genE_{j}+v^{-1}\genE_{j} \genE_{i})
 =-{\genE}_{{h+3} }^{\bs{\tau}} {\genE}_{{h+1} }^{\bs{\tau}}+v^{-1}{\genE}_{{h+1} }^{\bs{\tau}}{\genE}_{{h+3} }^{\bs{\tau}},\\
&{\ol{\genE}}_{{h+2} }^{\bs{\varsigma}}
   =T_{w{'}}T_{i}(\genE_{\ol{j}})
   =T_{w{'}}(-\genE_{i}\genE_{\ol{j}}
   +v^{-1}\genE_{\ol{j}}\genE_{i})
   =-{\genE}_{{h+3} }^{\bs{\tau}}{\ol{\genE}}_{{h+1}}^{\bs{\tau}}+v^{-1} {\ol{\genE}}_{{h+1}}^{\bs{\tau}}{\genE}_{{h+3} }^{\bs{\tau}}, \\
 &{\genF}_{{h+2} }^{\bs{\varsigma}}
 =T_{w{'}}T_{i}(\genF_{j})
 =T_{w{'}}(-\genF_{j}\genF_{i} +v\genF_{i}\genF_{j} )
 =-{\genF}_{{h+1} }^{\bs{\tau}}{\genF}_{{h+3} }^{\bs{\tau}} +v{\genF}_{{h+3} }^{\bs{\tau}}{\genF}_{{h+1} }^{\bs{\tau}},\\
&{\ol{\genF}}_{{h+2} }^{\bs{\varsigma}}
   =T_{w{'}}T_{i}(\genF_{\ol{j}})
   =T_{w{'}}(-\genF_{\ol{j}}\genF_{i}
   +v\genF_{i}\genF_{\ol{j}})
   =-{\ol{\genF}}_{{h+1}}^{\bs{\tau}}{\genF}_{{h+3} }^{\bs{\tau}}+v {\genF}_{{h+3} }^{\bs{\tau}}{\ol{\genF}}_{{h+1}}^{\bs{\tau}}; \\
  &
  {\genE}_{{h+3} }^{\bs{\varsigma}}
  =T_{w{'}}T_{i}T_{j}(\genE_{i})
  ,\quad  
   {\ol{\genE}}_{{h+3}}^{\bs{\varsigma}}
   =T_{w{'}}T_{i}T_{j}(\genE_{\ol{i}})
   ,\\
   &
   {\genF}_{{h+3} }^{\bs{\varsigma}}
  =T_{w{'}}T_{i}T_{j}(\genF_{i}),\quad  
   {\ol{\genF}}_{{h+3}}^{\bs{\varsigma}}
   =T_{w{'}}T_{i}T_{j}(\genF_{\ol{i}}).
\end{align*}
if $j=i+1$, then by Remark  \ref{note_i_i1}, we have 
\begin{align*}
&{\genE}_{{h+3} }^{\bs{\varsigma}}
  ={\genE}_{{h+1} }^{\bs{\tau}},\quad 
{\ol{\genE}}_{{h+3}}^{\bs{\varsigma}}
   ={\ol{\genE}}_{{h+2} }^{\bs{\tau}},\quad {\genF}_{{h+3} }^{\bs{\varsigma}}
  ={\genF}_{{h+1} }^{\bs{\tau}},\quad  
   {\ol{\genF}}_{{h+3}}^{\bs{\varsigma}}
   ={\ol{\genF}}_{{h+2} }^{\bs{\tau}}.
\end{align*}  
\end{enumerate}

\section{A PBW-type basis of $\Uvqn$}\label{PBW_1}

 In this section, we will construct a PBW-type basis for the quantum queer superalgebra $\Uvqn$.
 % similar to the approach in \cite[Proposition 9.1.3]{CP}.

With the fixed reduced  expression $w^{\bs{\sigma}}$ of $w_0$ in \eqref{w_0_fix}, 
for any $A=(\SEE{a}_{i,j}|\SOE{a}_{i,j})\in M_n (\mathbb{N}|\mathbb{Z}_2)$, 
$\bs{j} =(j_1 , \dots , j_n )\in \ZZ^{n} $,
denote   
 %Associated with (\ref{product}) and Lemma \ref{prop_rv} , we shall collate the products of root vectors  for $1\le i<j\le n$, 
\begin{align*}
&{\genK}_{A, \bs{j}} =\prod\limits_{ i=1}^{n}  \genK_i^{j_i} \genK_{\ol {i}}^{a_{i,i}^{\ol{1}}}, \\
&  {\genE}^{\bs{\sigma}}_{A} =\prod_{n\ge i\ge 1} \prod_{n\ge j\ge i+1} ({\genXE}^{\bs{\sigma}}_{i, j})^{\SE{a}_{i, j}} (\ol{\genXE}^{\bs{\sigma}}_{i, j})^{\SO{a}_{i, j}}, \\
& {\genF}^{\bs{\sigma}}_{A} 
   =\prod_{n\ge j\ge 2} \prod_{1\le i\le j-1}({\genXF}^{\bs{\sigma}}_{j, i})^{\SE{a}_{j, i}} (\ol{\genXF}^{\bs{\sigma}}_{j, i})^{\SO{a}_{j, i}}. 
\end{align*}

    let {$\Uv^{0}$} be the $\Qv$-subalgebra of {$\Uvqn$} generated by $\genK_i^{\pm 1}$ and $\genK_{\ol{i}} $ for $1 \le i \le n $, 
   and let {$\Uv^{+}$} (respectively. {$\Uv^{-}$})be the $\Qv$-subalgebra of {$\Uvqn$} generated by $\genE_j$ and $ \genE_{\ol{j}} $ (respectively. $\genF_j, \genF_{\ol{j}} $) for $1 \le j \le n-1 $. 
   Referring to \cite[Theorem 2.3]{DJ},  there is a linear space isomorphism
   $$
   \Uvqn \cong \Uv^{-} \otimes \Uv^{0} \otimes \Uv^{+}.
   $$

Recall the root vectors defined in \cite{DW}(by replacing quantum parameter $q$ to $v$):
for $1\leq i\leq n-1$,  define
$$
X_{i,i+1}=\genE_i,\quad X_{i+1,i}=\genF_i, \quad  \ol{X}_{i,i+1}=\genE_{\ol{i}},\quad  \ol{X}_{i+1,i}=\genF_{\ol{i}};
$$
for $|j-i|>1$,  define
\begin{equation}\label{q-root}
\aligned
	X_{i,j}&:=\left\{
\begin{array}{ll}
X_{i,k}X_{k,j} - {v} X_{k,j}X_{i,k},&\text{ if }i<j,\\
X_{i,k} X_{k,j} - {v}^{-1} X_{k,j}X_{i,k},&\text{ if }i>j,
\end{array}
\right.\\
	\ol{X}_{i,j} &:=\left\{
\begin{array}{ll}
X_{i,k}  \ol{X}_{k,j}-{v}  \ol{X}_{k,j}X_{i,k},&\text{ if }i<j,\\
 \ol{X}_{i,k} X_{k,j}-{v}^{-1}X_{k,j}  \ol{X}_{i,k},&\text{ if }i>j,
\end{array}
\right.
\endaligned
\end{equation}
where $k$ is strictly between $i$ and $j$.
Referring to Corollary \ref{cor_rv}, 
it is seen that $X_{i,j}$, $\ol{X}_{i,j}$ differs ${\genXE}^{\bs{\sigma}}_{i, j}$, $ \ol{\genXE}^{\bs{\sigma}}_{i, j}$  with $v$ and {${v}^{-1}$}.
Hence, similar to \cite[Proposition 5.8]{DW}, we have the following conclusion. 
\begin{prop}\label{prop6}
   With the fixed expression $w^{\bs{\sigma}}$, we have 
\begin{enumerate}
\item
   	 the set
$
   \{	 {\genE}^{\bs{\sigma}}_{A} =\prod_{n\ge i\ge 1} \prod_{n\ge j\ge i+1} ({\genXE}^{\bs{\sigma}}_{i, j})^{\SE{a}_{i, j}} (\ol{\genXE}^{\bs{\sigma}}_{i, j})^{\SO{a}_{i, j}}
      \where  A \in \MNZN(n)\} 
$
   	forms a $\Qv$-basis of {$\Uv^{+}$};
\item
     the set
$
 \{   {\genF}^{\bs{\sigma}}_{A} 
   =\prod_{n\ge j\ge 2} \prod_{1\le i\le j-1}({\genXF}^{\bs{\sigma}}_{j, i})^{\SE{a}_{j, i}} (\ol{\genXF}^{\bs{\sigma}}_{j, i})^{\SO{a}_{j, i}}
     \where  A \in \MNZN(n)\} 
$
   % where ${a_{j,i}^{\ol{0}}} \in \mathbb{N}, {a_{j,i}^{\ol{1}}} \in \mathbb{Z}_2 $,
   	forms a $\Qv$-basis of {$\Uv^{-}$};
\item
        the set
   	 $ \{  {\genK}_{A, \bs{j}} =\prod\limits_{ i=1}^{n}  \genK_i^{j_i} \genK_{\ol {i}}^{a_{i,i}^{\ol{1}}} \where \bs{j} =(j_1 , \dots , j_n )\in \ZZ^{n} , A \in \MNZN(n) \}$
   	 forms a $\Qv$-basis of {$\Uv^{0}$};
     \item
   	the set
   	$ \{  \bs{\frm}^{\bs{\sigma}}_{A} = {\genF}^{\bs{\sigma}}_{A} \cdot  {\genK}_{A, \bs{j}}  \cdot {\genE}^{\bs{\sigma}}_{A}  
   	\where  \bs{j} =(j_1 , \dots , j_n )\in \ZZ^{n} , A \in \MNZN(n) \}$
   	   	forms a $\Qv$-basis of {$\Uvqn$}.   
\end{enumerate}
\end{prop}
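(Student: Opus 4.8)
The plan is to deduce all four assertions from the known PBW-type basis of $\Uvqn$ in terms of the Du--Wan root vectors $X_{i,j},\ol X_{i,j}$ (\cite[Proposition~5.8]{DW}), together with the triangular decomposition $\Uvqn\cong\Uv^-\otimes\Uv^0\otimes\Uv^+$ of \cite[Theorem~2.3]{DJ}. The only new input needed is a comparison, for the fixed reduced word $w^{\bs\sigma}$, of the root vectors $\genXE^{\bs\sigma}_{i,j},\ol{\genXE}^{\bs\sigma}_{i,j},\genXF^{\bs\sigma}_{j,i},\ol{\genXF}^{\bs\sigma}_{j,i}$ with the Du--Wan root vectors $X_{i,j},\ol X_{i,j},X_{j,i},\ol X_{j,i}$; once this is in place, (1)--(4) follow by the same reasoning as in the proof of Theorem~\ref{thm_classical_pbw}, namely that a family obtained from a basis by an invertible change of coordinates is again a basis.

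\smallskip

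\noindent\textbf{Comparison of root vectors.} I would prove, by induction on $j-i$ for $1\le i<j\le n$, that there is a nonzero scalar $c_{i,j}\in\Qv^{\times}$ (an explicit signed power of $v$) with
\[
\genXE^{\bs\sigma}_{i,j}=c_{i,j}\,X_{i,j}+(\text{higher-length terms}),\qquad \ol{\genXE}^{\bs\sigma}_{i,j}=c_{i,j}\,\ol X_{i,j}+(\text{higher-length terms}),
\]
and the analogous identities for $\genXF^{\bs\sigma}_{j,i}$ and $\ol{\genXF}^{\bs\sigma}_{j,i}$, where ``higher-length terms'' means a $\Qv$-combination of products of two or more Du--Wan root vectors attached to strictly shorter roots of the same total weight. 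The base case $j=i+1$ is immediate from Corollary~\ref{cor_rv}, with $c_{i,i+1}=1$. For the inductive step one confronts the recursion of Corollary~\ref{cor_rv}, which realizes $\genXE^{\bs\sigma}_{i,j}$ as a $v^{-1}$-bracket of $\genXE^{\bs\sigma}_{i,j-1}$ against $\genE_{j-1}$, with the defining recursion~\eqref{q-root}, which realizes $X_{i,j}$ as a $v$-bracket of $X_{i,j-1}$ against $\genE_{j-1}$; substituting the inductive hypothesis and rewriting the ``$v$ versus $v^{-1}$'' discrepancy noted just before the statement (again by means of~\eqref{q-root}) into higher-length terms determines $c_{i,j}$. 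The even and odd cases get the \emph{same} scalar $c_{i,j}$ because Corollary~\ref{cor_rv} and~\eqref{q-root} have identical shape, the odd case differing only by replacing the outermost even simple root vector with its odd (barred) counterpart throughout; also, each $\genXE^{\bs\sigma}_{i,j}$ lies in $\Uv^+$ and is homogeneous of weight $\alpha_{i,j}$ (the standard property of the root vectors attached to the partial products of a reduced word for $w_0$), which keeps the higher-length terms inside the correct weight space.

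\smallskip

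\noindent\textbf{From root vectors to monomials.} Order $\MNZN(n)$ by the number of root-vector factors $\sum_{i<j}(\SE a_{i,j}+\SO a_{i,j})$ of the associated monomial, refined within each weight by the order implicit in \cite[Proposition~5.8]{DW} for straightening Du--Wan monomials into standard form; note that only the exponents $0,1$ occur for the odd root vectors, since $(\ol{\genXE}^{\bs\sigma}_{i,j})^2$ introduces nothing new by relation (QQ5). Multiplying out the defining products $\genE^{\bs\sigma}_A=\prod_{n\ge i\ge1}\prod_{n\ge j\ge i+1}(\genXE^{\bs\sigma}_{i,j})^{\SE a_{i,j}}(\ol{\genXE}^{\bs\sigma}_{i,j})^{\SO a_{i,j}}$ with the previous step, and using that the factors are taken in the \emph{same} order as in the Du--Wan monomial indexed by $A$, one finds that $\genE^{\bs\sigma}_A$ equals a nonzero scalar (a monomial in the $c_{i,j}$) times that Du--Wan monomial, plus a $\Qv$-combination of strictly higher Du--Wan standard monomials of the same weight. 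Hence the transition matrix from $\{\genE^{\bs\sigma}_A\}$ to the Du--Wan basis of $\Uv^+$ is triangular with nonzero diagonal, which proves (1); part (2) is the mirror image (or follows from (1) via the anti-involution $\Omega$ of~\eqref{omega}), part (3) is the Du--Wan basis of $\Uv^0$ verbatim since no root vectors occur there, and part (4) follows because the isomorphism $\Uvqn\cong\Uv^-\otimes\Uv^0\otimes\Uv^+$ of \cite[Theorem~2.3]{DJ} is induced by multiplication and so sends the product of the three bases in (1)--(3) to the family $\{\bs{\frm}^{\bs\sigma}_A=\genF^{\bs\sigma}_A\cdot\genK_{A,\bs j}\cdot\genE^{\bs\sigma}_A\}$. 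The main obstacle is the inductive comparison of root vectors: carrying the explicit $v$-power through, checking that the bracket discrepancy genuinely collapses into higher-length products rather than producing fresh top-length terms, and fixing an order on $\MNZN(n)$ for which the Du--Wan straightening corrections are strictly higher; after that, the argument is bookkeeping parallel to \cite{DW} and \cite{GLL}.
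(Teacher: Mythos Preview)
Your proposal is correct in outline, but it takes a longer path than the paper does. The paper's entire argument is the sentence preceding the statement: by Corollary~\ref{cor_rv} and~\eqref{q-root}, the $\bs\sigma$-root vectors $\genXE^{\bs\sigma}_{i,j},\ol{\genXE}^{\bs\sigma}_{i,j}$ are given by exactly the same iterated bracket recursion as Du--Wan's $X_{i,j},\ol X_{i,j}$, except with the parameter $v$ replaced by $v^{-1}$ (and an overall sign). Since the proof of \cite[Proposition~5.8]{DW} is insensitive to which generic value of the quantum parameter is used, it carries over verbatim and yields (1)--(4) directly. No comparison of the two families, no triangular transition matrix, is needed.

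Your route---expressing each $\genXE^{\bs\sigma}_{i,j}$ as $c_{i,j}X_{i,j}$ plus products of shorter Du--Wan root vectors, then arguing triangularity on monomials---is a legitimate alternative, and has the advantage that one never needs to reopen the proof in \cite{DW}: one uses its output as a black box. The cost is exactly the obstacle you flag: after substituting and multiplying out, the ``higher-length'' correction terms are not in Du--Wan standard order, so one must straighten them and verify that the straightening stays strictly higher in a filtration refining weight and factor-count. This can be done (the computation $\genXE^{\bs\sigma}_{i,i+2}=-X_{i,i+2}+(v^{-1}-v)X_{i+1,i+2}X_{i,i+1}$ is the base of the induction and shows the pattern), but it is genuinely more work than the paper's one-line transport of proof along $v\mapsto v^{-1}$.
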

Referring to Lemma \ref{lemroot},  for $1\le i <  j \le n$, we have 
\begin{equation}\label{vr_d1}
\begin{aligned}
\genE^{\bs{\sigma}}_{\frac{i(i+1)}{2}}=\genXE_{i,i+1}, \qquad
\genF^{\bs{\sigma}}_{\frac{i(i+1)}{2}}=\genXE_{i+1,i}, \\
\ol{\genE}^{\bs{\sigma}}_{\frac{i(i+1)}{2}}=\ol{\genXE}_{i,i+1}, \qquad
\ol{\genF}^{\bs{\sigma}}_{\frac{i(i+1)}{2}}=\ol{\genXE}_{i+1,i};
\end{aligned}
\end{equation}
and when $j>i+1$,  we have
\begin{equation}\label{vr_d2}
\begin{aligned}
\genE^{\bs{\sigma}}_{\frac{(j-2)(j-1)}{2}+i-1}= \genXE_{i,j}, \qquad
\genF^{\bs{\sigma}}_{\frac{(j-2)(j-1)}{2}+i-1}= \genXF_{j,i}, \\
\ol{\genE}^{\bs{\sigma}}_{\frac{(j-2)(j-1)}{2}+i-1}= \ol{\genXE}_{i,j}, \qquad
\ol{\genF}^{\bs{\sigma}}_{\frac{(j-2)(j-1)}{2}+i-1}= \ol{\genXF}_{j,i}.
\end{aligned}
\end{equation}
We define the set for any positive integer $m$, 
\begin{align*}
(\NN | \ZG)^{m} = \{   \bs{k} = (\SE{\bs{k}} | \SO{\bs{k}}) \where \SE{\bs{k}}=(\SE{k}_1, \SE{k}_2, \cdots , \SE{k}_m)\in \NN^{m}, 
\SO{\bs{k}}=(\SO{k}_1, \SO{k}_2, \cdots , \SO{k}_m)\in \ZG^m \}.
\end{align*}
And for any $\bs{t} \in (\NN | \ZG)^{N} $,
we denote the products of all root vectors of $\Uvqn$ by 
\begin{equation}\label{product}
\begin{aligned}
&{\genE}^{\bs{\sigma}, {\bs{t}}}
    =({\genXE}^{\bs{\sigma}}_{N})^{\SE{t}_{N}}  (\ol{\genXE}^{\bs{\sigma}}_{N})^{\SO{t}_{N}}\cdot
    ({\genXE}^{\bs{\sigma}}_{{N-1}})^{\SE{t}_{N-1}} (\ol{\genXE}^{\bs{\sigma}}_{{N-1}})^{\SO{t}_{N-1}}\cdots 
    ({\genXE}^{\bs{\sigma}}_{1})^{\SE{t}_{1}} (\ol{\genXE}^{\bs{\sigma}}_{1})^{\SO{t}_{1}}
    , \\
&{\genF}^{\bs{\sigma}, {\bs{t}}}
    =({\genXF}^{\bs{\sigma}}_{\beta_1})^{\SE{t}_{1}} (\ol{\genF}_{\beta_1})^{\SO{t}_{1}} \cdot
     ({\genXF}^{\bs{\sigma}}_{\beta_{2}})^{\SE{t}_{2}} (\ol{\genF}_{\beta_{2}})^{\SO{t}_{2}} \cdots
     ( {\genXF}^{\bs{\sigma}}_{\beta_N})^{\SE{t}_{N}} (\ol{\genF}_{\beta_N})^{\SO{t}_{N}}.
\end{aligned}
\end{equation}
By equations \eqref{vr_d1},  \eqref{vr_d2} and Proposition \ref{prop6}, 
we have the following results:
\begin{thm}\label{thm_pbwbasis}
   With the fixed expression $w^{\bs{\sigma}}$, we have 
\begin{enumerate}
\item
   	 the set
$
   \{	 {\genE}^{\bs{\sigma}, {\bs{t}}}
      \where  \bs{t} \in (\NN | \ZG)^{N} \} 
$
   	forms a $\Qv$-basis of {$\Uvqn^{+}$};
\item
     the set
$
 \{   {\genF}^{\bs{\sigma}, {\bs{t}}}
     \where  \bs{t} \in (\NN | \ZG)^{N}\} 
$
   % where ${a_{j,i}^{\ol{0}}} \in \mathbb{N}, {a_{j,i}^{\ol{1}}} \in \mathbb{Z}_2 $,
   	forms a $\Qv$-basis of {$\Uvqn^{-}$};
\item 
the set
$
 \{ {\genF}^{\bs{\sigma}, {\bs{r}}} {\genK}_{\bs{i}, \bs{j} } {\genE}^{\bs{\sigma}, {\bs{t}}}\where \bs{r}, \bs{t} \in (\NN | \ZG)^{N}, \bs{j} =(j_1 , \dots , j_n )\in \ZZ^{n} ,  \bs{i} =(i_1 , \dots , i_n )\in \ZG^{n} \} $ forms a $\Qv$-basis of {$\Uvqn$}.
\\
\end{enumerate}
\end{thm}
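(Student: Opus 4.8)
The plan is to deduce Theorem~\ref{thm_pbwbasis} from Proposition~\ref{prop6} by reindexing the PBW monomials appearing there and then reordering their factors, the reordering being the only step needing genuine work. First I would record the dictionary between the two labelling conventions: by Lemma~\ref{lemroot} together with \eqref{vr_d1}--\eqref{vr_d2}, the rule $t\mapsto(i,j)$ determined by $\beta_{\bs{\sigma},t}=\alpha_{i,j}$ is a bijection of $\{1,\dots,N\}$ onto $\{(i,j):1\le i<j\le n\}$, under which $\genE^{\bs{\sigma}}_{t}=\genXE^{\bs{\sigma}}_{i,j}$, $\ol{\genE}^{\bs{\sigma}}_{t}=\ol{\genXE}^{\bs{\sigma}}_{i,j}$, $\genF^{\bs{\sigma}}_{t}=\genF^{\bs{\sigma}}_{j,i}$ and $\ol{\genF}^{\bs{\sigma}}_{t}=\ol{\genF}^{\bs{\sigma}}_{j,i}$. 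Consequently the index set $(\NN|\ZG)^{N}$ of \eqref{product} is in bijection with the strictly upper-triangular data $(\SE{a}_{i,j}|\SO{a}_{i,j})_{i<j}$ of matrices $A\in\MNZN(n)$ (and symmetrically with the strictly lower-triangular data), while $\genK_{\bs{i},\bs{j}}$ of Theorem~\ref{thm_pbwbasis} is simply the element $\genK_{A,\bs{j}}$ of Proposition~\ref{prop6}(3) with $\bs{i}=(a^{\ol{1}}_{1,1},\dots,a^{\ol{1}}_{n,n})$. In particular $\genE^{\bs{\sigma},\bs{t}}$ and the monomial $\genE^{\bs{\sigma}}_{A}$ of Proposition~\ref{prop6}(1) are built from the same root-vector powers, and differ only in the order of multiplication: $\genE^{\bs{\sigma},\bs{t}}$ lists them by decreasing $t$, whereas $\genE^{\bs{\sigma}}_{A}$ uses the ``matrix order'' $\prod_{n\ge i\ge1}\prod_{n\ge j\ge i+1}$; likewise for $\genF^{\bs{\sigma},\bs{t}}$ against $\genF^{\bs{\sigma}}_{A}$.

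Next I would argue that this reordering is harmless. Both orders of $\Phi^{+}$ in play are convex orders (the one attached to $\bs{\sigma}$, reversed, and the matrix order), and any two linear orders of a finite set are joined by finitely many transpositions of adjacent elements. Interchanging a power of $\genXE^{\bs{\sigma}}_{a}$ with an adjacent power of $\genXE^{\bs{\sigma}}_{b}$ replaces the monomial by a unit scalar times the swapped monomial plus a $\Qv$-linear combination of monomials having strictly fewer root-vector factors, built from roots lying between $\alpha_{a}$ and $\alpha_{b}$: this is the Levendorskii--Soibelman type straightening, which follows from the $v$-bracket recursions of Corollary~\ref{cor_rv} and the commutation relations of \cite[\S5]{DW} for the $X_{i,j}$, $\ol{X}_{i,j}$ (in the queer, as in the $\mathfrak{gl}_{n}$, case such a bracket is always either zero or a single root vector, so the corrections lose exactly one factor). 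Filtering $\Uvqn^{+}$ by the number of root-vector factors and inducting on it, the matrix expressing $\{\genE^{\bs{\sigma},\bs{t}}\}$ in terms of $\{\genE^{\bs{\sigma}}_{A}\}$ is triangular with invertible diagonal; hence $\{\genE^{\bs{\sigma},\bs{t}}\}$ is a $\Qv$-basis of $\Uvqn^{+}$ precisely because $\{\genE^{\bs{\sigma}}_{A}\}$ is, by Proposition~\ref{prop6}(1).

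This gives part (1). Part (2) follows by the same argument applied to Proposition~\ref{prop6}(2), or by transporting (1) through the anti-involution $\Omega$ of \eqref{omega}, which reverses products and so exchanges the two orders. For part (3) I would use the triangular decomposition $\Uvqn\cong\Uvqn^{-}\otimes\Uv^{0}\otimes\Uvqn^{+}$ of \cite[Theorem~2.3]{DJ}: the product of the basis of $\Uvqn^{-}$ from (2), the basis $\{\genK_{\bs{i},\bs{j}}\}=\{\genK_{A,\bs{j}}\}$ of $\Uv^{0}$ from Proposition~\ref{prop6}(3), and the basis of $\Uvqn^{+}$ from (1) is then a $\Qv$-basis of $\Uvqn$, exactly as in Proposition~\ref{prop6}(4).

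The main obstacle is the straightening invoked in the reordering step. It requires the $v$-commutation relations among the quantum root vectors $\genXE^{\bs{\sigma}}_{i,j}$, $\ol{\genXE}^{\bs{\sigma}}_{i,j}$, $\genF^{\bs{\sigma}}_{j,i}$, $\ol{\genF}^{\bs{\sigma}}_{j,i}$ in a form explicit enough to see that each elementary swap produces only strictly shorter correction terms. If these relations are quoted from \cite{DW} the argument is brief; deriving them from scratch --- the quantum analogue of the commutation formulas of Section~\ref{root_2}, now carrying the extra powers of $v$ and compatible with the identity $\genE_{\ol{i}}^{2}=-\frac{v-v^{-1}}{v+v^{-1}}\genE_{i}^{2}$ of (QQ5), which is what confines the odd exponents to $\ZG$ --- is where the actual effort lies.
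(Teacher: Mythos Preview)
Your approach is correct and follows the same route as the paper: deduce the theorem from Proposition~\ref{prop6} via the reindexing \eqref{vr_d1}--\eqref{vr_d2}. The paper's own proof is in fact the single sentence preceding the theorem (``By equations \eqref{vr_d1}, \eqref{vr_d2} and Proposition~\ref{prop6}, we have the following results''), so your write-up is considerably more detailed than what the paper provides.

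You are right to flag the reordering: under the bijection $t\leftrightarrow(i,j)$ the product in \eqref{product} lists the roots by decreasing~$j$ then decreasing~$i$, whereas the monomial ${\genE}^{\bs{\sigma}}_{A}$ of Proposition~\ref{prop6} lists them by decreasing~$i$ then decreasing~$j$; these orders agree for $n\le 3$ but differ from $n=4$ onward (e.g.\ $\alpha_{1,4}$ and $\alpha_{2,3}$ occur in opposite relative positions). The paper passes over this silently, and your triangular straightening argument is exactly how one justifies that the change of ordering does not affect the basis property. The relevant $v$-commutation relations among $\genXE^{\bs{\sigma}}_{i,j}$, $\ol{\genXE}^{\bs{\sigma}}_{i,j}$ are indeed available in \cite[\S5]{DW} (up to the $v\leftrightarrow v^{-1}$ convention noted before Proposition~\ref{prop6}), so quoting them keeps the argument short; if you want to be fully self-contained you would need the quantum analogue of the commutator table in Section~\ref{root_2}, which is routine but lengthy. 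Your derivation of part~(3) from parts~(1)--(2) together with Proposition~\ref{prop6}(3) and the triangular decomposition is the same as the paper's.
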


%XXXXXXXXXXXXX
 \textbf{Open question:} 
For any reduced decomposition $w^{\bs{\gamma}}$ different from $w^{\bs{\gamma}}$, 
will Theorem \ref{thm_pbwbasis} still be true? 
\\

\textbf{Acknowledgements.}  This work was partially supported by the National Natural Science Foundation of China (Grant Nos. 12371040, 12131018 and 12161141001).
 
\clearpage

%\bibliographystyle{unsrt}
%\bibliographystyle{siam}
%\bibliography{reference}

\begin{thebibliography}{10}

\bibitem{artin1925theory}
{\sc E.~Artin}, {\em The theory of braids}, American Scientist, 38 (1946), pp.~112--119.

\bibitem{Birkhoff}
{\sc G.~Birkhoff}, {\em Representability of {Lie} algebras and {Lie} groups by matrices}, Annals of Mathematics, 38 (1937), p.~526.

\bibitem{Bohnenblust}
{\sc F.~Bohnenblust}, {\em The algebraical braid group}, Annals of Mathematics, 48 (1947), p.~127.

\bibitem{BK2}
{\sc J.~Brundan and A.~Kleshchev}, {\em Modular representations of the supergroup q(n), i}, Journal of Algebra, 260 (2003), pp.~64--98.
\newblock Special Issue Celebrating the 80th Birthday of Robert Steinberg.

\bibitem{CP}
{\sc V.~Chari and A.~Pressley}, {\em A Guide to Quantum Groups}, Cambridge University Press, 10 1995.

\bibitem{CW}
{\sc S.-J. Cheng and W.~Wang}, {\em Dualities and representations of {Lie} superalgebras}, Graduate Studies in Mathematics, 144 (2009).

\bibitem{DDPW}
{\sc B.~Deng, J.~Du, B.~Parshall, and J.~Wang}, {\em Finite Dimensional Algebras and Quantum Groups}, vol.~150, American Mathematical Society, Providence, 2008.

\bibitem{Uvsln}
{\sc V.~Drinfeld}, {\em Hopf algebras and the quantum yang-baxter equation}, Proceedings of the USSR Academy of Sciences, 32 (1985), pp.~254--258.

\bibitem{DV}
{\sc V.~G. Drinfel'd}, {\em Quantum groups}, Journal of Soviet Mathematics, 41 (1988), pp.~898--915.

\bibitem{DLZ}
{\sc J.~Du, Y.~Lin, and Z.~Zhou}, {\em Quantum queer supergroups via $\upsilon$-differential operators}, Journal of Algebra,  (2020).

\bibitem{DW}
{\sc J.~Du and J.~Wan}, {\em Presenting queer {Schur} superalgebras}, International Mathematics Research Notices, 2015 (2015), pp.~2210--2272.

\bibitem{DW1}
{\sc J.~Du and J.~Wan}, {\em The queer $q$-{Schur} superalgebra}, Journal of the Australian Mathematical Society, 105 (2018), pp.~1--31.

\bibitem{GJKKK}
{\sc D.~Grantcharov, J.~Jung, S.-J. Kang, M.~Kashiwara, and M.~Kim}, {\em Crystal bases for the quantum queer superalgebra and semistandard decomposition tableaux}, Transactions of the American Mathematical Society, 366 (2011).

\bibitem{DJ}
{\sc D.~Grantcharov, J.~H. Jung, S.-J. Kang, and M.~Kim}, {\em Highest weight modules over quantum queer superalgebra ${U_q(\mathfrak {q}(n))}$}, Communications in Mathematical Physics, 296 (2009), pp.~827--860.

\bibitem{GLL}
{\sc H.~Gu, Z.~Li, and Y.~Lin}, {\em The integral {Schur-Weyl-Sergeev} duality}, Journal of Pure and Applied Algebra, 226 (2022), p.~107044.

\bibitem{JCJ}
{\sc J.~C. Jantzen}, {\em Lectures on quantum groups}, American Mathematical Society,  (1995).

\bibitem{Jimbo}
{\sc M.~Jimbo}, {\em A q-difference analogue of ${U(\mathfrak{g})}$ and the {Yang-Baxter} equation}, Letters in Mathematical Physics, 10 (1985), pp.~63--69.

\bibitem{Faddeev}
{\sc L.D.Faddeev}, {\em Quantum Completely Integrable Models in Field Theory}, Soviet Scientific Reviews, 03 1980.

\bibitem{Lus}
{\sc G.~Lusztig}, {\em Quantum deformations of certain simple modules over enveloping algebras}, Advances in Mathematics, 70 (1988), pp.~237--249.

\bibitem{Lus2}
{\sc G.~Lusztig},  {\em Finite-dimensional hopf algebras arising from quantized universal enveloping algebra}, Journal of the American Mathematical Society, 3 (1990), pp.~257--296.

\bibitem{Ol}
{\sc G.~I. Olshanski}, {\em Quantized universal enveloping superalgebra of type {Q} and a super-extension of the {Hecke} algebra}, Letters in Mathematical Physics, 24 (1992).

\bibitem{Sergeev1985THETA}
{\sc A.~N. Sergeev}, {\em The tensor algebra of the identity representation as a module over the lie superalgebra ${GL}(n,m)$ and ${Q}(n)$}, Mathematics of The Ussr-sbornik, 51 (1985), pp.~419--427.

\bibitem{Witt}
{\sc E.~Witt}, {\em Treue darstellung liescher ringe.}, Journal f{\"u}r die reine und angewandte Mathematik (Crelles Journal),  (1937), pp.~152--160.

\end{thebibliography}
%\iffalse
\normalem

%\fi
\end{document}
\endinput